\newtheorem{tw}{Theorem}[section]
\newtheorem{lm}[tw]{Lemma}
\newtheorem{prop}[tw]{Proposition}
\newtheorem{wn}[tw]{Corollary}
\theoremstyle{definition}
\newtheorem{uw}[tw]{Remark}
\newcommand{\n}{\mathbb{N}}
\newcommand{\z}{\mathbb{Z}}
\newcommand{\q}{\mathbb{Q}}
\newcommand{\re}{\mathbb{R}}
\newcommand{\al}{\alpha}
\newcommand{\la}{\lambda}
\newcommand{\g}{\gamma}
\newcommand{\ep}{\varepsilon}
\newcommand{\de}{\delta}
\newcommand{\numberthis}{\addtocounter{equation}{1}\tag{\theequation}}
\title[Translation flows disjoint with their inverse]
{On typicality of translation flows which are disjoint with their inverse}
\author[P.~Berk]{Przemys\l aw Berk}
\address{Faculty of Mathematics and Computer Science, Nicolaus
Copernicus University, ul. Chopina 12/18, 87-100 Toru\'n, Poland}
\email{zimowy@mat.umk.pl}
\author[K.~Fr\k{a}czek]{Krzysztof Fr\k{a}czek}
\address{Faculty of Mathematics and Computer Science, Nicolaus
Copernicus University, ul. Chopina 12/18, 87-100 Toru\'n, Poland}
\email{fraczek@mat.umk.pl}
\author[T.~de~la~Rue]{Thierry de la Rue}
\address{Laboratoire de Math\'ematiques Rapha\"el Salem, Normandie Universit\'e,
Universit\'e de Rouen, CNRS, 
Avenue de l'Universit\'e, 76801 Saint Etienne du Rouvray, France}
\email{Thierry.de-la-Rue@univ-rouen.fr}
\subjclass[2000]{37A10, 37E35, 37C80}
\date{\today}
\keywords{measure-preserving flows, translation  surfaces, reversibility of dynamical systems, joinings methods in ergodic theory}
\begin{document}
	\begin{abstract}
	In this paper we prove that translation structures for which the corresponding vertical translation flows is weakly mixing and disjoint with its inverse, form a $G_\de$-dense set in every non-hyperelliptic connected component of the moduli space $\mathcal M$. This is in contrast to hyperelliptic case, where for every translation structure the associated vertical flow is isomorphic to its inverse. To prove the main result, we study  limits of the off-diagonal 3-joinings of special representations of vertical translation flows. Moreover, we construct a locally defined continuous embedding of the moduli space into the space of measure-preserving flows to obtain the $G_\de$-condition.
	\end{abstract}
	\maketitle

\section{Introduction}\label{sec:intr}
Let $M$ be an orientable compact connected topological surface, and $\Sigma$ be a finite set of singular points. On $M$ we can consider a \emph{translation structure} $\zeta$, i.e.\ an atlas on $M\setminus\Sigma$ such that every transition transformation  is a translation. Every translation surface can be viewed as a polygon with pairwise parallel sides of the same length which are glued together (gluing is made by a translation). Parameters given by the sides of such polygons establish a parametrization of the so-called \emph{moduli space} $\mathcal M$ and yield a topology on $\mathcal M$. To each translation structure $\zeta$ we  associate the corresponding Lebesgue measure $\la_\zeta$ on $M$. Moreover, for every direction we consider the flow which acts by translation in that direction with unit speed.  Such translation flows preserve $\la_\zeta$. In this paper we are interested in vertical translation flows. It is worth to mention that the study of directional flows on translation surfaces originates
from  problems concerning billiard flows on rational polygons (see \cite{Fox},\cite{Katok}).

In \cite{KonZo} the authors give a complete characterisation of connected components of the moduli space; all of them are orbifolds. On each connected component $C$ we consider an action of $SL_2(\re)$ which is derived form the linear action of $SL_2(\re)$ on polygons. Moreover, there is a Lebesgue measure $\nu_C$ on $C$ which is invariant under this action.
Let $\mathcal M_1$ be the set of $\zeta\in\mathcal M$ such that $\la_\zeta(M)=1$ and let $C_1:=C\cap \mathcal M_1$. We also consider a measure $\nu_{C_1}$ on $C_1$ which is a projectivization of $\nu_C$. This measure is finite and invariant under the action of $SL_2(\re)$ on $C_1$. In fact, the action of $SL_2(\re)$ is ergodic with respect to this measure (see \cite {Maz} and \cite{Veech}).
This gives an opportunity to use the ergodic theory to study dynamical properties of vertical flows on almost all translations structures. In particular, it was used to prove that the sets of translation structures for which the vertical translation flow is ergodic (see \cite{Maz}), and further is weakly mixing (see \cite{AF}) are of full measure in both $C$ and $C_1$. At the same time, there are no mixing translation flows (see \cite{Ka}).
In this paper we are interested in translation structures for which the corresponding vertical translation flow is disjoint with its inverse, which is a stronger notion than being not reversible. Recall that a measure preserving flow $\{T_t\}_{t\in\re}$ on $(X,\mu)$ is reversible, if there exists an involution $\theta:X\to X$ which preserves $\mu$ and
\[
\theta\circ T_{-t}=T_t\circ\theta\ \text{for all}\ t\in\re.
\]
Our result concerns the topological typicality of the desired property rather than measure-theoretical.
As a by-product, we give a method to show that the set of translation structures for which the associated vertical translation flows satisfy any property which is $G_\de$ in the space of measure preserving flows, is also a $G_\de$-set. Among these properties are for instance weak mixing, ergodicity and rigidity (see \cite{Halmos}).

In the classification of connected components given in \cite{KonZo} we distinguish so called hyperelliptic components. For every hyperelliptic component $C$ there exists an involution $\theta:M\to M$ such that for every $\zeta\in C$ it is given in local coordinates by the formula $z\mapsto -z+c$ for some $c\in\mathbb C$.
In particular, the vertical flow on $(M,\zeta)$ is reversible; it is isomorphic with its inverse by the involution $\theta$ (see remark \ref{hypisom}). In contrast, in this paper we show that on non-hyperelliptic components of the moduli space the set of translation structures for which the vertical flow is disjoint with its inverse is topologically large. It is expressed by the following theorem.
\begin{tw}\label{main}
	Let $C$ be a non-hyperelliptic connected component of the moduli space of translation structures. Then the set of translation structures whose vertical flow is weakly mixing and disjoint with its inverse is a $G_\de$-dense set in $C$.
\end{tw}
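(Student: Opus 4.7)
\medskip
\noindent\textbf{Proof plan.} My strategy is to show separately that ``weak mixing'' and ``disjointness from the inverse'' each cut out a dense $G_\de$ subset of $C$; their intersection is then $G_\de$-dense by the Baire category theorem in the Polish space $C$.

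To obtain the $G_\de$ property I would construct the continuous embedding announced in the abstract. Around any $\zeta_0 \in C$, fix a polygonal presentation of $\zeta_0$ with prescribed combinatorial data and let the side vectors vary in a small neighborhood $U$ of $\zeta_0$. The vertical flow on $(M,\zeta)$ then has, for every $\zeta \in U$, a common Poincar\'e section realising it as a special flow over an IET with a \emph{fixed} permutation, continuously varying length vector, and continuously varying roof function. This gives a continuous map $\iota \colon U \to \mathrm{Flow}(X,\mu)$, where $\mathrm{Flow}(X,\mu)$ is the Polish space of measure-preserving flows on a fixed standard probability space with the topology of weak convergence. Both ``weak mixing'' and ``disjoint from inverse'' are known to be $G_\de$ conditions on $\mathrm{Flow}(X,\mu)$ (the second because the compact set of joinings between a flow and its inverse depends upper-semicontinuously on the flow, and coincidence with the product joining is a countable intersection of open conditions). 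Pulling back through the local embeddings $\iota$ produces the desired $G_\de$ subsets of $C$.

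For density, weak mixing is dense in $C$ since the set of weakly mixing vertical flows has full $\nu_C$-measure by \cite{AF} and $\nu_C$ is a regular Borel measure with full support. The core is density of the set on which the vertical flow is disjoint from its inverse. Here I would use the off-diagonal $3$-joining analysis indicated in the abstract: writing $T = \{T_t\}$ as a special flow over an IET and taking sequences $t_n$ produced by Rauzy--Veech renormalisation, one studies weak-$*$ accumulation points on $X^3$ of the joinings
\[
\mu_n = (T_0 \times T_{t_n} \times T_{-t_n})_*\Delta_3.
\]
A careful analysis of the ergodic decomposition of the limits $\mu_n$, using the combinatorial structure of non-hyperelliptic Rauzy classes, should force every ergodic joining between $T$ and $T^{-1}$ to be the product joining, i.e.\ $T \perp T^{-1}$. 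Density of the relevant Rauzy paths in $C$ will translate this into density of the set $\{\zeta : T^\zeta \perp (T^\zeta)^{-1}\}$ in $C$.

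The main obstacle is precisely this density step. On a hyperelliptic component the canonical involution $\theta$ produces a nontrivial joining between $T$ and $T^{-1}$ for every $\zeta$, so non-hyperellipticity of $C$ must be used in an essential way: it has to be converted into a combinatorial condition on the associated Rauzy class which both excludes the symmetry responsible for $\theta$ and can be realised in every open subset of $C$. Extracting this condition from the off-diagonal $3$-joining limits, and verifying its density in $C$, is the technical heart of the argument; once combined with the $G_\de$-ness coming from the embedding, it yields Theorem \ref{main} via Baire's theorem.
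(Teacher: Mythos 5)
Your high-level skeleton matches the paper's (Baire category via a local continuous embedding of the moduli space into the Polish space of flows, plus a density argument via $3$-off-diagonal joinings), but the step you pass over in one sentence — ``this gives a continuous map $\iota\colon U\to\operatorname{Flow}(X,\mu)$'' — is where most of the actual work in the paper lives, and as stated it is a genuine gap. The special-flow representation over a common Poincar\'e section does not directly produce a map into $\operatorname{Flow}(X,\mu)$ for a \emph{fixed} $(X,\mu)$: as $\zeta$ varies in $U$, the permutation stays fixed but the length vector $\la(\zeta)$ and the roof function $h(\zeta)$ both change, so the underlying measure space $X^{h(\zeta)}$ changes with $\zeta$. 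To land in one fixed $\operatorname{Flow}(X,\mu)$ you must conjugate by a family of measure isomorphisms $\mathcal S_\zeta\colon (M,\la_\zeta)\to(M,\la_{\zeta_0})$ that depends \emph{continuously} on $\zeta$ (in the right topologies), and this cannot be done by a naive renormalisation of the fiber coordinate since that distorts the base measure. The paper handles this by a Moser-type measure transport on the surface: first a piecewise-affine comparison homeomorphism $\mathfrak h_\omega$ whose pushforward of $\la_\omega$ has density close to $1$ (Lemma~\ref{techn}), then a continuous family of homeomorphisms $\mathcal H_f$ correcting that density back to Lebesgue (Theorem~\ref{glow}), assembled in Theorem~\ref{wloz}. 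You should not take the existence of the embedding as given; it requires an argument of roughly this kind, and your proposal contains none. (Also, your separate claim that ``disjoint from its inverse'' is $G_\de$ on its own is not justified by upper-semicontinuity of $J(T,T^{-1})$ in $T$; the cleaner route, and the one the paper takes, is to pull back the joint $G_\de$-dense set of weakly mixing flows disjoint from their inverse from \cite{DaRy} directly.)

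For the density step your description is too vague to be checkable and differs from the paper's route. You propose to run Rauzy--Veech on an arbitrary $\zeta$ and analyse the ergodic decomposition of accumulation points of off-diagonal $3$-joinings, claiming non-hyperellipticity ``should force'' every joining of $T$ with $T^{-1}$ to be product. The paper instead produces a \emph{dense} subset $C_*\subset C$ of translation structures that degenerate (after one step of polygonal Rauzy--Veech) to special flows over a circle rotation under a piecewise constant roof with exactly four discontinuities; for such flows the limit joinings along $\pm q_n$ can be computed explicitly (Lemma~\ref{specmiary}), and disjointness from the inverse together with weak mixing follows from the criteria of Corollary~\ref{kryterium1} and Proposition~\ref{weakmix}. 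The role of non-hyperellipticity is not to force triviality of joinings for a generic $\zeta$; it is a purely combinatorial input (Proposition~\ref{acperm}, Corollary~\ref{nozero}) producing two symbols $a_1,a_2$ with $\Omega_{a_1a_2}=\Omega_{a_2a_1}=0$ and ensuring the jumps of the degenerate roof function are rationally independent, which is exactly what both criteria require. If you want to carry out your version of the density argument, you would need to explain how to extract the needed quantitative comparison of $P$ and $Q$ (as in \eqref{setB}) from Rauzy--Veech limits for arbitrary IETs; the paper avoids this difficulty by first reducing to rotations.
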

It is also worth to mention that on non-hyperelliptic components we can also find a non-trivial set of translation structures for which 
the vertical flow is reversible.
\begin{prop}\label{odwr}
	Let $C$ be a non-hyperelliptic connected component of the moduli space of translation structures. Then the set of translation structures whose vertical flow is reversible is  dense in $C$.
	\end{prop}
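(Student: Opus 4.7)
The approach hinges on a geometric source of reversibility for translation flows: if $\zeta\in\mathcal M$ admits an involution $\theta:M\to M$ preserving $\la_\zeta$ which in every local translation chart takes the form $z\mapsto\bar z+c$, then the vertical flow $\{T_t\}$ of $\zeta$ is reversible through $\theta$. The verification is immediate, since in local coordinates $\theta(T_{-t}(z))=\overline{z-it}+c=\bar z+it+c=T_t(\theta(z))$. I will call such $\zeta$ \emph{horizontally symmetric} and reduce the proposition to showing that the set $\mathcal R_C$ of horizontally symmetric structures in $C$ is dense in $C$. Note that this involution is antiholomorphic, hence genuinely different from the hyperelliptic involution $z\mapsto-z+c$ ruled out on non-hyperelliptic components.

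The first step is to exhibit horizontally symmetric structures in every non-hyperelliptic component. Using the combinatorial description of connected components from \cite{KonZo}, one builds translation surfaces directly from polygons $P\subset\re^2$ invariant under the reflection $z\mapsto\bar z$, with a pairing of parallel sides that is equivariant under this reflection. The resulting translation surface carries $z\mapsto\bar z$ as a horizontal reflection involution. Checking that both spin parities in each stratum can be realized by such symmetric polygonal models (by adjusting the symmetric pairing combinatorics and computing the induced Arf invariant) guarantees $\mathcal R_C\ne\emptyset$ for every non-hyperelliptic $C$.

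For the density of $\mathcal R_C$ in $C$, a natural strategy is to work locally in period coordinates around a fixed $\zeta_0\in\mathcal R_C$: nearby horizontally symmetric structures form a real-analytic submanifold $\mathcal R_C^{\mathrm{loc}}\subset C$, invariant under the diagonal Teichm\"uller action $g_t=\operatorname{diag}(e^t,e^{-t})$, which commutes with $z\mapsto\bar z$. One then needs to spread these local symmetric structures densely throughout $C$; I would do so by combining the density of square-tiled surfaces in $C$ with a surgery procedure that replaces a given origami by a nearby symmetric one (for instance, by ``bubbling in'' a small symmetric flat piece or by symmetrizing the defining permutation data), followed by a small $SL_2(\re)$-adjustment to land within $\varepsilon$ of the target $\zeta\in C$. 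The main obstacle is ensuring that the approximating symmetric structures remain in the prescribed component $C$ rather than slipping into another component of the same stratum; this requires careful tracking of the spin parity through each symmetrization step, which is precisely where the non-hyperelliptic classification from \cite{KonZo} enters in an essential way.
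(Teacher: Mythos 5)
Your computation that an anti-holomorphic involution $\theta$ acting in local translation charts as $z\mapsto\bar z+c$ reverses the vertical flow is correct, and such a $\theta$ would indeed witness reversibility. The fatal gap is in the density claim. The set $\mathcal R_C$ of translation structures admitting such an involution is a real-analytic submanifold of (roughly) half the real dimension of $C$: in period coordinates the condition $\theta^*\zeta=\bar\zeta$ is the fixed locus of an antilinear involution on $H^1(M,\Sigma;\mathbb C)$, cutting $\dim_{\mathbb R}C=2(2g+s-1)$ down to $2g+s-1$. A proper, lower-dimensional submanifold is nowhere dense, and no amount of $SL_2(\mathbb R)$-sliding (which adds at most three real dimensions) or localized ``bubbling'' surgery (which perturbs the surface without making it globally $\bar z$-symmetric) can repair this. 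Already for $g=1$ the phenomenon is visible: only the tori with $\tau$ on the boundary of a fundamental domain in $\mathbb H$ admit an anti-holomorphic involution, yet every linear torus flow is reversible via the holomorphic elliptic involution $z\mapsto -z$ --- reversibility of the flow is strictly weaker than geometric symmetry of the surface.

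The paper's argument exploits exactly this distinction and has a completely different mechanism. It shows (via the recurrence of polygonal Rauzy--Veech induction, following Lemma~14 of~\cite{Fr}) that the set $C_{**}$ of $M(\pi,\la,\tau)\in C$ with $\la_a=0$ for all $a\notin\{\pi_0^{-1}(1),\pi_0^{-1}(d)\}$ is dense in $C$. These degenerate parameter choices keep the \emph{surface} in the correct topological stratum and component (the $\tau$-data stays generic), but the special representation of the vertical flow reduces to a flow over a two-interval IET, i.e.\ a rotation, under a two-step roof --- which is measure-theoretically a linear flow on a torus, hence reversible. The reversing conjugacy is a measurable isomorphism, not a symmetry of $(M,\zeta)$. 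You should drop the search for a dense family of geometrically symmetric structures and instead look for a dense set whose vertical flows degenerate to a class (such as torus flows) that is already known to be reversible.
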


Recall that a measure-preserving flow $\{T_t\}_{t\in\re}$ on a standard Borel probability space $(X,\mathcal{B},\mu)$ is disjoint with its inverse if the only $(T_t\times T_{-t})$ - invariant probability measure on $X\times X$, which projects on each coordinate as $\mu$ is 
 the product measure $\mu\otimes\mu$.
In \cite{FrKuLem} the authors developed techniques to prove non-isomorphism of a flow $T^f$ to its inverse that are 
based on studying the weak closure of off-diagonal $3$-self-joinings. Moreover, in \cite{BerkFr} the authors improved those techniques to show that a large class of special flows over interval exchange transformations and under piecewise absolutely continuous functions 
have the property of being non-isomorphic with 
their inverse. The idea of detecting non-isomorphism of a dynamical system and its inverse by studying the weak closure of off-diagonal $3$-self-joinings was introduced by Ryzhikov in \cite{Ryz1}. In this paper we prove that techniques mentioned earlier can be used to detect disjointness of a vertical flow with its inverse.

 To prove the $G_\de$ condition, we use the 
 result of Danilenko and Rhyzhikov from \cite{DaRy} (which derives from a version for automorphisms given in \cite{dJun}), where they proved that the flows with the property of being disjoint with 
 their inverse form a $G_\delta$-dense set in the space of measure preserving flows. To use their result we construct a locally defined continuous embedding of 
 the moduli space into the space of measure preserving flows. To show the density condition, we largely rely on the proof of Lemma 14 in \cite{Fr}.

In Section \ref{sec:prem} we give 
a general background concerning joinings, interval exchange transformations, space of measure preserving flows, translation flows and moduli spaces. In particular, we give some tools needed to prove the continuity of a map with values in the space of flows and we state some connections between the moduli space and interval exchange transformations.

In Section \ref{sec:join} we introduce a criterion of disjointness of two flows by researching the weak limits of certain $3$-self-joinings. This is a direct improvement of 
the criteria stated in \cite{FrKuLem} and \cite{BerkFr} as we show that these are actually criteria of two weakly mixing flows being disjoint. Furthermore, we state a criterion of a flow being weakly mixing, which also uses weak limits of $3$-self-joinings as a tool.

In Section \ref{sec:perm} we state combinatorial conditions on translation structures which are later used in proving the density condition in our main theorem. To be precise, we show that our results apply to every non-hyperelliptic component of 
the moduli space.

In Section \ref{sec:meas} we show that on a given translation surface $(M,\zeta)$ there exists $\ep_\zeta>0$ such that for every absolutely continuous measure $\mu$ on $M$ which has density $f$ satisfying $\int_{M}|f(x)-1|dx<\ep_\zeta$ there exists a homeomorphism $H:M\to M$ such that $H_*\mu$ is the Lebesgue measure with $H$ depending continuously on $f$. The results of section \ref{sec:meas} were inspired by the works of Moser in \cite{Mos} and Goffman, Pedrick in \cite{GP}.

In Section \ref{sec:emb} we use the results presented in 
the previous section to construct a continuous embedding of each connected component of the moduli space into the space of measure-preserving flows. The embedding is defined locally, but we also show that this is enough to transfer the $G_\delta$ condition.

Finally, in Section \ref{sec:main} we first state a result which is a conclusion from the previous 
sections, that in every connected component of 
the moduli space the set of translation structures whose associated vertical translation flow is disjoint with its inverse, is a $G_\de$ set. In the remainder of this section we use the results from \cite{BerkFr} to show that the criteria introduced in Section \ref{sec:join} can be used for a dense set of translation structures in every non-hyperelliptic component,
which leads to the 
proof of Theorem~\ref{main}.
As a by-product we get the proof of Proposition \ref{odwr}.
	
\section{Preliminaries}\label{sec:prem}
We will now give some details regarding interval exchange transformations, joinings of dynamical systems and some basic information about moduli space.

\subsection{Joinings}
In this subsection we give some definitions which are stated for standard Borel probability spaces.  
However these definitions can be easily extended to standard Borel spaces with finite measures. Though here we state definitions for probability spaces, in the remainder of this paper we will freely use them in case when 
the measure is finite and not necessarily a probability. In particular we say that the measure preserving flows $\mathcal T=\{T_t\}_{t\in\re}$ on $(X,\mathcal B,\mu)$ and $\mathcal S=\{S_t\}_{t\in\re}$ on $(Y,\mathcal C,\nu)$ are \emph{isomorphic}, if there exists a measurable $F:(X,\mathcal B)\to(Y,\mathcal C)$ such that
\[
T_t=F^{-1}\circ S_t\circ F\ \text{for}\ t\in\re\quad\text{and}\quad
F_*\mu=\frac{\mu(X)}{\nu(Y)}\nu.
\]

Let $K>0$ be a natural number and for $1\le i\le K$ let $\mathcal T^i=\{T^i_t\}_{t\in\re}$ be a measure preserving flow acting on a standard Borel probability space $(X^i,\mathcal B^i,\mu^i)$ . We say that a measure $\la$ on $(X^1\times\ldots\times X^K,\mathcal B^1\otimes\ldots\otimes\mathcal B^K)$ is a {\it K-~joining} if it is 
$\mathcal T^1\times\ldots\times\mathcal T^K$-invariant and
it projects on $X^i$ as $\mu^i$ for each $i=1,\ldots,K$. We denote by $J(\mathcal T^1,\ldots,\mathcal T^K)$ the set of all joinings of $\mathcal T^i$ for $i=1,\ldots,K$ and by $J^e(\mathcal T^1,\ldots,\mathcal T^K)$ the subset of ergodic joinings. If for $i=1,\ldots,K$, $(X^i,\mathcal B^i,\mu^i,\mathcal T^i)$ are copies of the same flow, then we say that $\la$ is a {\it K-self joining}. We denote the set of $K$-self joinings of a flow $\mathcal T$ by $J_K(\mathcal T)$ and ergodic $K$-self joinings by $J^e_K(\mathcal T)$. If $\mathcal T^1,\ldots,\mathcal T^K$ are ergodic, then the following remarks hold.
\begin{uw}
	$J(\mathcal T^1,\ldots,\mathcal T^K)$ is a compact simplex in the space of all $\mathcal T^1\times\ldots\times\mathcal T^K$-invariant measures and its set of extremal points $\big(J(\mathcal T^1,\ldots,\mathcal T^K)\big)$ equals $J^e(\mathcal T^1,\ldots,\mathcal T^K)$.
\end{uw}
\begin{uw}[Ergodic decomposition]
	For each $\la\in J(\mathcal T^1,\ldots,\mathcal T^K)$ there exists a unique probability measure $\kappa$ on $J^e(\mathcal T^1,\ldots,\mathcal T^K)$ such that
	\[
	\la=\int_{J^e(\mathcal T^1,\ldots,\mathcal T^K)}\rho d\kappa(\rho).
	\]
\end{uw}

Assume that $K=2$. Note that $\mu^1\otimes\mu^2\in J(\mathcal T^1,\mathcal T^2)$. We say that 
the flows $\mathcal T^1$ and $\mathcal T^2$ are {\it disjoint in the sense of Furstenberg} (or simply disjoint) if the product measure is the only joining between them.
\begin{uw}
	If two flows are disjoint, then they have no common factor. In particular, they are not isomorphic.
\end{uw}
The 
notions of joinings and disjointness can be rewritten for automorphisms instead of flows. Then we also have the following well-known observation.
\begin{uw}\label{ident}
	If $(X,\mathcal B,\mu,T)$ is an ergodic automorphism and $(Y,\mathcal C,\nu,Id)$ is the identity then $T$ and $Id$ are disjoint.
	\end{uw}

Let $\phi:(X^1,\mathcal B^1,\mu^1,\mathcal T^1)\to(X^2,\mathcal B^2,\mu^2,\mathcal T^2)$ be an isomorphism. It is easy to see that $\mu_\phi^1:=(Id\times \phi)_*\mu^1$ is a joining of $\mathcal T^1$ and $\mathcal T^2$. We say that $\mu_\phi^1$ is a {\it graph joining}. We have the following remark.
\begin{uw}\label{graph}
	Let $\la\in J(\mathcal T^1,\mathcal T^2)$ and let $\Pi\subseteq \mathcal B^1$ be a family of measurable sets. Let $\phi:(X^1,\mathcal B^1,\mu^1,\mathcal T^1)\to(X^2,\mathcal B^2,\mu^2,\mathcal T^2)$ be an isomorphism. Then the following are equivalent:
	\begin{itemize}
	\item[(1)] $\la(A\times B)=\mu^1(A\cap\phi^{-1}(B))$ for all $A\in\Pi$ and $B\in\phi(\Pi)$;
	\item[(2)] $\la(A\times X\ \triangle\ X\times \phi A)=0$ for every $A\in\Pi$;
	\item[(3)] $\la(A\times \phi A^c)=\la(A^c\times\phi A)=0$ for every $A\in\Pi$.
	\end{itemize}
	\end{uw}
Consider graph joinings between two identical flows $(X,\mathcal B,\mu,\mathcal T)$. If $\phi=T_{-t}$ for some $t\in\re$ then we say that $\mu_\phi$ is a {\it 2-off-diagonal joining} and we denote it by $\mu_t$. In other words for $A,B\in\mathcal B$ we have
\[
\mu_t(A\times B)=\mu(A\cap T_t B)=\mu(T_{-t}A\cap B).
\]
This definition is easily extended to higher dimensions, namely a {\it K-off-diagonal joining} $\mu_{t_1,\ldots,t_{K-1}}$ is a $K$-joining given by the formula
\begin{equation}\label{defjoin}
\mu_{t_1,\ldots,t_{K-1}}(A_1\times\ldots\times A_K)=\mu(T_{-t_1}A_1\cap\ldots\cap T_{-t_{K-1}}A_{K-1}\cap A_K),
\end{equation}
for all $A_1,\ldots,A_K\in\mathcal B$.

Let $\mathcal P(\re^{K-1})$ be the set of Borel probability measures on 
$\re^{K-1}$. For every $P\in\mathcal P(\re^{K-1})$ we consider 
the \emph{$K$-integral joining} given by
\[
\begin{split}
\Bigg(\int_{\re^{K-1}}\mu_{t_1,\ldots,t_{K-1}}&dP(t_1,\ldots,t_{K-1})\Bigg)(A_1\times\ldots\times A_K)\\
&:=\int_{\re^{K-1}}\mu_{t_1,\ldots,t_{K-1}}(A_1\times\ldots\times A_K)dP(t_1,\ldots,t_{K-1}),
\end{split}
\]
where $A_1,\ldots,A_K\in\mathcal B$.

A \emph{Markov operator} $\Psi:L^2(X,\mathcal B,\mu)\to L^2(X,\mathcal B,\mu)$ is a linear operator which satisfies
\begin{enumerate}
	\item it is a contraction, that is $\|\Psi f\|_2\le\|f\|_2$ for every $f\in L^2(X,\mathcal B,\mu)$;
	\item $f\ge 0\Rightarrow \Psi(f)\ge 0$;
	\item $\Psi(\mathbf 1)=\mathbf 1=\Psi^*(\mathbf 1)$.
	\end{enumerate}
With every 2-self joining $\la\in J_2(\mathcal T)$, we can associate a Markov operator $\Psi(\la):L^2(X,\mathcal B,\mu)\to L^2(X,\mathcal B,\mu)$ such that
\begin{equation}\label{operator}
\la(A\times B)=\int_X\Psi(\la)(\chi_A)\chi_Bd\mu
\text{ for any }A,B\in\mathcal B.
\end{equation}
Denote by $\mathcal J(\mathcal T)$ the set of all Markov operators which commute with the Koopman 
operator associated with $\mathcal T$. It appears that if we consider weak-{\textasteriskcentered } topology on $\mathcal J(\mathcal T)$, then \eqref{operator} defines an affine homeomorphism $\Psi:J_2(\mathcal T)\to\mathcal J(\mathcal T)$.  For more information about joinings and Markov operators we refer to \cite{Glas}.

Consider 
the affine continuous map $\Pi_{1,3}:J_3(\mathcal T)\to J_2(\mathcal T)$ given by
\begin{equation}\label{rzutpi}
\Pi_{1,3}(\la)(A\times B):=\la(A\times X\times B)\ \text{for any}\ A,B\in\mathcal B.
\end{equation}
In other words $\Pi_{1,3}(\la)$ is the projection of  
the joining $\la$ on the first and third 
coordinates. Analogously, we define $\Pi_{2,3}$, the projection on the second and third 
coordinates. Since $J_2(\mathcal T)$ and $\mathcal J(\mathcal T)$ are affinely homeomorphic, we can consider 
the affine continuous maps $\Psi\circ\Pi_{i,3}:J_3(\mathcal T)\to\mathcal J(\mathcal T)$ for $i=1,2$. Note that for any $t,s\in\re$ we have
\begin{equation}\label{doop}
\Psi\circ\Pi_{1,3}(\mu_{t,s})=T_{-t},\ \text{and}\ \Psi\circ\Pi_{2,3}(\mu_{t,s})=T_{-s}.
\end{equation}
For $i\in\{1,2\}$ let $\sigma_i:\re^2\to\re$ be the projection on 
the $i$-th coordinate. Then for every $P\in\mathcal P(\re^2)$, we also have
\begin{equation}\label{rzutop}
\begin{split}
&\Pi_{i,3}\Big(\int_{\re^2}\mu_{-t,-s}\,dP(t,s)\Big)=\int_{\re^2}\Pi_{i,3}(\mu_{-t,-s})\,dP(t,s)\text{ and }\\
\Psi\circ&\Pi_{i,3}\Big(\int_{\re^2}\mu_{-t,-s}\,dP(t,s)\Big)=\int_{\re}T_{t}d\big((\sigma_i)_*P\big)(t),
\end{split}
\end{equation}
for $i=1,2$.
\subsection{Special flows.}
\label{sec:specialflows}
Let $(X,\mathcal B,\mu)$ be a standard Borel probability space.
Let $T:X\to X$ be an ergodic 
$\mu$-preserving automorphism. Let $f\in L^1([0,1))$ be positive and for any $n\in\z$ consider
\[
f^{(n)}(x) 
:= \begin{cases}
\sum_{i=0}^{n-1}f(T^ix)&\text{ if }n\ge 1\\
0&\text{ if }n=0\\
-\sum_{i=n}^{-1}f(T^ix)&\text{ if }n\le -1.
\end{cases}
\]
Define $X^f:=((x,r);x\in X,\ 0\le r<f(x))$ and on $X^f$ consider 
the measure $\mu\otimes \operatorname{Leb}(|_{X^f}$. 
The special flow $T^f=\{T^f_t\}_{t\in\re}$ is 
the measure preserving flow acting on $X^f$ by the formula
\[
T^f_t(x,r):=(T^nx,r+t-f^{(n)}(x)),
\]
where $n\in\z$ is unique, such that $f^{(n)}(x)\le r+t<f^{(n+1)}(x)$. We say that $f$ is the \emph{roof function} and $T$ is the \emph{base} of 
the special flow. In view of Ambrose Representation Theorem (see \cite{Ambr}), every ergodic flow is measure theoretically isomorphic to a special flow. Such a special flow is called \emph{a special representation} of the flow.
In this paper we deal with special flows whose roof functions are piecewise continuous and 
whose bases are interval exchange transformations. We always assume that roof functions are right-continuous and that the left limits exist. If a piecewise continuous bounded function $f$ has a discontinuity at $x$, then 
the \emph{jump at }$x$ is the number $d:=f(x)-\lim_{y\to x^-}f(x)$.

\subsection{Space of flows}
Let $(X,\mathcal B(X),\mu)$ be a standard Borel probability space. By $\operatorname{Flow}(X)$ we denote the set of all measure preserving flows on $X$. Let $\mathcal T=\{T_t\}_{t\in\re}\in \operatorname{Flow}(X)$, $A\in\mathcal B(X)$ and $\ep>0$. Let
\[
U(\mathcal T,A,\ep):=\{\mathcal S=\{S_t\}_{t\in\re}\in \operatorname{Flow}(X);\sup_{t\in[-1,1]}\mu(T_tA\triangle S_tA)<\ep\}.
\]
It appears that the family of sets of the above form gives a subbase of a topology, and $\operatorname{Flow}(X)$ endowed with this topology is a Polish space.

Let $(Y,d)$ be a metric space. It follows that a map $F:Y\to \operatorname{Flow}(X)$ is continuous if for any $y\in Y$ and $A\in\mathcal B(X)$ we have
\begin{equation}\label{ciagdoflow}
\text{for any }\ep>0\text{ there exists }\de>0\text{ such that } d(y,z)<\de\Rightarrow F(z)\in U(F(y),A,\ep).
\end{equation}
By using the fact that for any $A_1,B_1,A_2,B_2\in\mathcal B(X)$ we have
\[
A_1\triangle B_1=A_1^c\triangle B_1^c\text{ and }(A_1\cup B_1)\triangle(A_2\cup B_2)\subseteq(A_1\triangle A_2)\cup(B_1\triangle B_2),
\]
we can prove that 
the set of all $A\in \mathcal B(X)$, for which for every $\ep>0$ there exists $\de_A$ such that \eqref{ciagdoflow} is satisfied, form an algebra. 
By using the triangle inequality
\[
\mu(A\triangle B)\le\mu(A\triangle C)+\mu(B\triangle C)\text{ for }A,B,C\in\mathcal B(X),
\]
 we can prove that this algebra is closed under taking the countable union of increasing family of sets and thus, it is a $\sigma$-algebra.
Hence it is enough to check \eqref{ciagdoflow} for a family of sets which generates $\mathcal B(X)$.

All non-atomic standard Borel probability spaces are measure theoretically isomorphic (see Theorem 3.4.23 in \cite{Borel}). Let $(X_1,\mathcal B(X_1),\mu_1)$ and $(X_2,\mathcal B(X_2),\mu_2)$ be  standard Borel non-atomic probability spaces and let 
$H:X_1\to X_2$ be 
some isomorphism. Then $\operatorname{Flow}(X_1)$ and $\operatorname{Flow}(X_2)$ can be identified by a homeomorphism $\phi:\operatorname{Flow}(X_1)\to \operatorname{Flow}(X_2)$ given by the formula
\[
\phi(\mathcal T):=H\circ\mathcal T\circ H^{-1}.
\]
\begin{uw}\label{transport}
	To prove that $F:(Y,d)\to \operatorname{Flow}(X_1)$ is continuous, we can instead prove that $\phi\circ F:(Y,d)\to \operatorname{Flow}(X_2)$ is continuous. In other words, we need to prove that for every $y\in Y$ and $A\in\mathcal D\subset\mathcal B(X_2)$, where $\mathcal D$ generates $\mathcal B(X_2)$, we have
	\[
	\text{for any }\ep>0\text{ there exists }\de>0\text{ such that } d(y,z)<\de\Rightarrow \phi\circ F(z)\in U(\phi\circ F(y),A,\ep).
	\]
\end{uw}

\subsection{Interval exchange transformations}
Let $\mathcal A$ be an alphabet of $d$ elements. Let now $\epsilon\in\{0,1\}$ and let $\pi_\epsilon:\mathcal A\to\{1,\ldots,d\}$ be bijections. We will now consider 
a permutation $\pi$ as a pair $\{\pi_0,\pi_1\}$ where $\pi_0(\al)$ corresponds to the position of letter $\al$ before permutation, while $\pi_1(\al)$ defines the position of $\al$ after permutation. We say that a  permutation $\pi$ is {\it irreducible} if there is no $1\le k<d$ such that
\[
\pi_{1}\circ\pi_0^{-1}\big(\{1,\ldots,k\}\big)=\{1,\ldots,k\}.
\]
In this paper we will only deal with irreducible permutations, so this assumption will usually be omitted. We say that the permutation is \emph{symmetric} if
\[
\pi_1(\al)=d+1-\pi_0(\al)\ \text{for every}\ \al\in\mathcal A.
\]
Note that a symmetric permutation is always irreducible.

The 
intervals that we will now consider are always left-side closed and right-side open unless told otherwise. Let $I$ be an interval equipped with 
its Borel $\sigma$-algebra and Lebesgue measure $\operatorname{Leb}($.  Without losing generality, we can assume that the left endpoint of $I$ is $0$. Let $\{I_\al\}_{\al\in\mathcal A}$ be a partition of $I$ into $d$ intervals, where $I_\al$ has length $\la_\al\geq 0$. We will denote $\la:=\{\la_\al\}_{\al\in\mathcal A}$ {\it the length vector} and obviously we have $|\la|:=\sum_{\al\in\mathcal A}\la=\operatorname{Leb}(I)$.
The {\it interval exchange transformation (IET)} $T_{\pi,\la}:I\to I$ is 
the automorphism which permutes intervals $I_\al$ according to the permutation $\pi$.
Let now $\Omega_\pi:=[(\Omega_\pi)_{\al\beta}]_{\al,\beta\in\mathcal A}$ be 
the $d\times d$ matrix given by the following formula
\begin{equation}
(\Omega_\pi)_{\al\beta}:=
\begin{cases}
+1&\text{ if }\pi_0(\al)<\pi_0(\beta)\text{ and }\pi_1(\al)>\pi_1(\beta);\\
-1&\text{ if }\pi_0(\al)>\pi_0(\beta)\text{ and }\pi_1(\al)<\pi_1(\beta);\\
0&\text{  otherwise}.
\end{cases}
\end{equation}
We will say that $\Omega_\pi$ is 
the {\it translation matrix} of $T^{\pi,\la}$. The name of the matrix is derived from the fact that $T_{\pi,\la}$ acts on an interval $I_{\al}$ as a translation by number $\sum_{\beta\in\mathcal A}(\Omega_\pi)_{\al\beta}\la_\beta$.

Let $\partial I_\al$ be the left endpoint of $I_\al$. We say that 
the IET $T_{\pi,\la}$ satisfies {\it Keane's condition} if
\[
T^m_{\pi,\la}(\partial I_\al)=\partial I_\beta\text{ for }m>0\text{ implies }\al=\pi_1^{-1}(1),\beta=\pi_0^{-1}(1)\text { and }m=1.
\]
It is easy to see that $T_{\pi,\la}$ satisfies Keane's condition 
whenever $\la$ is a rationally independent vector
(that is there is no nontrivial integer linear combination of numbers $\la_\al$, which will give 
a rational number).

Denote by $S_{\mathcal A}^0$ the set of all irreducible permutations of 
$\mathcal A$. We may consider the space $S_{\mathcal A}^0\times \re_{\ge0}^{\mathcal A}$ of all IETs of $d$ intervals. Define 
the operator $R:S_{\mathcal A}^0\times \re_{\ge0}^{\mathcal A}\to S_{\mathcal A}^0\times \re_{\ge0}^{\mathcal A}$, such that $R(\pi,\la)=R(T_{\pi,\la})$ is 
the first return map of $T_{\pi,\la}$ to the interval $[0,|\la|-\min\{\la_{\pi_0^{-1}(d)},\la_{\pi_1^{-1}(d)}\})$. The operator $R$ is called the {\it Rauzy-Veech induction} (or {\it righthand side Rauzy-Veech induction}). The {\it Rauzy classes} are the minimal subsets of $S_{\mathcal A}^0$ which are invariant under the induced action of $R$ on $S_{\mathcal A}^0$.

Let
\[
l:\{1,\ldots,d\}\to\{1,\ldots,d\}\text{ be given by }l(i)=d+1-i.
\]
The function $l$ acts on $S_{\mathcal A}^0$ by mapping $\{\pi_0,\pi_1\}$ onto $\{l\circ\pi_0,l\circ\pi_1\}$. The {\it extended Rauzy classes} are minimal subsets of $S_{\mathcal A}^0$ which are invariant under $R$ and action of $l$. We have the following result.
\begin{tw}[Rauzy]\label{twpierost}
	Any Rauzy class of permutations of $d\ge 2$ elements contains at least one permutation $\pi$ such that
	\[
	\pi_1\circ\pi_0^{-1}(1)=d\quad \text{and}\quad\pi_1\circ\pi_0^{-1}(d)=1.
	\]
\end{tw}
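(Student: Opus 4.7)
The plan is to argue by induction on $d\ge 2$. For the base case $d=2$, the alphabet $\mathcal A=\{\alpha,\beta\}$ admits only one irreducible permutation, namely the one with $\pi_0^{-1}=(\alpha,\beta)$ and $\pi_1^{-1}=(\beta,\alpha)$, which directly satisfies $\pi_1\circ\pi_0^{-1}(1)=2=d$ and $\pi_1\circ\pi_0^{-1}(d)=1$. No nontrivial Rauzy-Veech move is required.

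For the inductive step, fix $d\ge 3$ and a Rauzy class $\mathcal R\subseteq S_{\mathcal A}^0$. Starting from an arbitrary $\pi\in\mathcal R$, I plan to reach the desired form by iterating Rauzy-Veech moves. Recall that although $R$ as a map depends on the length vector $\lambda$, as an operation on $S_{\mathcal A}^0$ it splits into two combinatorial moves, top-wins and bottom-wins (obtained by choosing $\lambda$ so that either $\lambda_{\pi_0^{-1}(d)}>\lambda_{\pi_1^{-1}(d)}$ or the converse); both of these keep us inside $\mathcal R$. The first task is to track the distinguished letters $\alpha=\pi_0^{-1}(d)$ and $\beta=\pi_1^{-1}(d)$. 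A top-wins move fixes the entire $\pi_0$-order and slides $\beta$ from the last bottom position to immediately below $\alpha$; repeated top-wins operations progressively push the current top-last letter $\alpha$ upward through the bottom row. The key combinatorial lemma I aim to establish is that after finitely many such moves, $\alpha$ necessarily lands at bottom-position $1$, so that $\pi_1\circ\pi_0^{-1}(d)=1$ holds. Using the symmetric sequence of bottom-wins moves, one can analogously arrange $\pi_1\circ\pi_0^{-1}(1)=d$; the issue is doing both simultaneously.

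To handle simultaneity, once one corner condition is achieved, I plan to reduce to a smaller alphabet. With $\alpha$ occupying both the top-last and bottom-first position, $\alpha$ plays no role in the remaining Rauzy-Veech dynamics except as a placeholder at the corner. Deleting $\alpha$ yields a permutation of the $(d-1)$-element alphabet $\mathcal A\setminus\{\alpha\}$; one must first check that this restricted permutation is still irreducible (using the fact that the original $\pi$ is irreducible and that $\alpha$ sits at the corner). The resulting permutation lies in some Rauzy class of permutations of $d-1$ letters, to which the inductive hypothesis applies, producing a sequence of Rauzy-Veech moves on the restricted class ending at a permutation of the desired form.

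The main obstacle will be the lifting step: to verify that the sequence of moves constructed inside the reduced $(d-1)$-element Rauzy class can be realized by a sequence of Rauzy-Veech moves in $\mathcal R$ which does not dislodge $\alpha$ from its two corner positions. This requires checking, move by move, that the top-last and bottom-last letters occurring in the reduced dynamics are always letters different from $\alpha$, so that the $\alpha$-row and $\alpha$-column are untouched. I expect this to follow from a careful description of how Rauzy-Veech operations act on permutations whose extreme positions are already in the target configuration, together with the observation of Theorem~\ref{twpierost} applied to the reduced Rauzy class. Once this lifting is established, the lifted final permutation provides the element of $\mathcal R$ with the prescribed corner structure, completing the induction.
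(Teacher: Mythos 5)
The paper only cites this statement to Rauzy and gives no proof, so your argument has to stand on its own — and the central step already fails. You claim that iterating top-wins moves pushes $\alpha=\pi_0^{-1}(d)$ to bottom position~$1$, but this is false. A top-wins move fixes $\pi_0$ (so $\alpha$ is literally the same letter before and after), and it changes $\pi_1$ by removing $\beta=\pi_1^{-1}(d)$ from bottom position $d$ and reinserting it at position $\pi_1(\alpha)+1$; the only bottom positions that get shuffled are those in $\{\pi_1(\alpha)+1,\dots,d\}$. Therefore $\pi_1(\alpha)$ is an \emph{invariant} of top-wins moves, and no number of them will ever move $\alpha$ to bottom position~$1$ unless it was already there. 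To change $\pi_1\circ\pi_0^{-1}(d)$ you are forced to interleave bottom-wins moves, but those change the letter $\pi_0^{-1}(d)$ itself, so ``tracking $\alpha$'' no longer means tracking a fixed letter. Your sketch supplies no monovariant controlling a mixed sequence of the two move types, and that is precisely where the content of Rauzy's theorem lies.

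The reduction step is also broken, in two independent ways. First, deleting the corner letter does not preserve irreducibility: with $d=3$, the permutation with top row $(a,b,c)$ and bottom row $(c,a,b)$ is irreducible, with $c$ at top position $3$ and bottom position $1$; deleting $c$ leaves top $(a,b)$, bottom $(a,b)$, the identity on two letters, which is reducible and hence belongs to no Rauzy class. Second, even when the restriction is irreducible, the lifting you describe cannot be carried out: once $\alpha$ sits at top-$d$ and bottom-$1$, the only Rauzy--Veech moves that leave $\alpha$ in the corner are the top-wins ones, and those merely rotate the bottom-row entries in positions $2,\dots,d$, an orbit of size at most $d-1$. A Rauzy move on the reduced $(d-1)$-letter permutation whose loser is its top-last letter has no counterpart among moves that keep $\alpha$ pinned, so generic sequences in the reduced class do not lift, and the induction does not close.
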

\subsection{Translation surfaces and 
moduli space.}
Let $M$ be an orientable compact and connected topological surface of genus $g\ge 1$. Let $\Sigma:=\{A_1,\ldots,A_s\}$ be a finite subset of singular points in $M$. Let $\kappa:=(\kappa_1,\ldots,\kappa_s)$ be a
vector of positive integers satisfying $\sum_{i=1}^s\kappa_i=2g-2$.
A {\it translation structure} on $M$ is a maximal atlas $\zeta$ on $M\setminus\Sigma$ of charts by open sets of $\mathbb C$ such that any coordinate change between charts is a translation
and for each $1\le i\le s$ there exists a neighbourhood $V_i\subset M$ of $A_i$, a neigbourhood $W_i\subset \mathbb C$ of $0$ and a ramified covering $\pi:(V_i,A_i)\to(W_i,0)$ of degree $\kappa_i+1$ such that each injective restriction of $\pi$ is a chart of $\zeta$.
On $(M,\zeta)$ we can consider a holomorphic 1-form which in the local coordinates can be written as $dz$. We will denote this form also by $\zeta$.
We identify the associated 2-form $\frac{i}{2}\zeta\wedge \bar{\zeta}$ with the Lebesgue measure $\la_\zeta$ on $M$.
Moreover, the quadratic form $|\zeta|^2$ yields a Riemannian metric $(M,\zeta)$. By $d_\zeta$ we denote the distance given by this metric.
We also consider on $(M,\zeta)$ a vertical translation flow, denoted by $\mathcal T^\zeta=\{\mathcal T^\zeta_t\}_{t\in\re}$, which in 
local coordinates is a unit speed flow in the vertical direction. The flow $\mathcal T^\zeta$ preserves $\la_\zeta$ and thus can be viewed as an element of $\operatorname{Flow}(M,\la_\zeta)$.

In the set of all translation structures on $M$ we identify 
the structures $\zeta_1$ and $\zeta_2$ if there exists a homeomorphism $H:M\to M$ which sends singular points of $\zeta_1$ onto singular points of $\zeta_2$ and $\zeta_1=H^*\zeta_2$.
In terms of local coordinates, $H$ is locally a translation.
This is an equivalence relation and its equivalence classes form  
the {\it moduli space} denoted by $Mod(M)$. The moduli space can be divided into subsets called {\it strata} $\mathcal M(M,\Sigma,\kappa)=\mathcal M(\kappa)$, for which the vector of degrees of singularities is given by $\kappa$.
It appears that each such stratum $\mathcal M(M,\Sigma,\kappa)$ is a complex orbifold (see \cite{Veech}) and has a finite number of connected components (see \cite{KonZo}).
On $\mathcal M$ we can consider an action of $SL(2,\re)$. It is given by composing the charts of a translation surface with 
a given linear map.
The strata are invariant under the action of $SL(2,\re)$.
It is worth noting that in particular for every $\theta\in\re/\z$ we can apply the rotation $r_\theta$ by $\theta$ to the translation structure and almost every angle $\theta$ yields no saddle connection of a vertical flow.

Let $\pi=(\pi_0,\pi_1)$ be a permutation of the alphabet $\mathcal A$ of $d>1$ elements and let $\la\in\re^{\mathcal A}_{\ge0}$. Consider also 
$\tau\in\re^{\mathcal A}$ such that for each $1\le k<d$ we have
\[
\sum_{\{\al;\pi_0(\al)\le k\}}\tau_\al>0\text{ and }\sum_{\{\al;\pi_1(\al)\le k\}}\tau_\al<0.
\]
Moreover we require that
\[
\pi_i(\al)=\pi_i(\beta)+1\ \wedge\ \la_\al=\la_\beta=0\ \Rightarrow\ \tau_\al\cdot\tau_\beta>0\ \text{for all}\ i=0,1\text{ and }\al,\beta\in\mathcal A.
\]
For a fixed permutation $\pi$, we denote by $\Theta_\pi$ the set of triples $(\pi,\la,\tau)$ satisfying the above conditions.

Consider the 
polygonal curve in $\mathbb C$ obtained by connecting the points $0$ and $\sum_{i\le k}(\la_{\pi^{-1}_0(i)}+i\tau_{\pi^{-1}_0(i)})$ for $k=1,\ldots,d$, using the 
line segments.
Analogously we can consider the 
polygonal curve obtained by connecting the points $0$ and $\sum_{i\le k}(\la_{\pi^{-1}_1(i)}+i\tau_{\pi^{-1}_1(i)})$ for $k=1,\ldots,d$.
These two 
polygonal curves define a polygon with $d$ pairs of parallel sides. By identifying those sides we obtain a translation surface $M$, with $\Sigma$ being the set of vertices of the polygon (some of them may be identified).
We denote by $M(\pi,\la,\tau)$ the translation structure given by $(\pi,\la,\tau)$.

It appears that whenever $\mathcal T^\zeta$ admits no saddle-connections, $\zeta$ can be viewed as $M(\pi,\la,\tau)$ for some $(\pi,\la,\tau)\in\Theta_\pi$,
with $\pi$ being some permutation (see 
\textit{e.g.} \cite{Yoccoz}). Moreover we can consider $(\pi,\la,\tau)\in\Theta_\pi$ as local coordinates in the neighbourhood of such $\zeta$ in the corresponding stratum. Since almost every rotation yields no saddle-connections, to obtain local coordinates in the neighbourhood of $\zeta$ for which $\mathcal T^\zeta$ has a saddle connection, we can use the rotation to obtain local coordinates around rotated form and then rotate it back.

 Kontsevich and Zorich in \cite{KonZo} gave a complete characterization of connected components of strata in 
 the moduli space.
 In particular, they showed that each stratum $\mathcal M(2g-2)$ and $\mathcal M(g-1,g-1)$, where $g$ is the genus of 
 the surface, contains exactly one so-called \emph{hyperelliptic} connected component, which we denote by $\mathcal M^{hyp}(2g-2)$ and $\mathcal M^{hyp}(g-1,g-1)$ respectively.
 For every hyperelliptic component $C\subset \mathcal M$, there exists an involution $\phi:M\to M$ such that for every $\zeta\in C$ we have $\phi^*\zeta=-\zeta$. In particular we have the following remark.
 \begin{uw}\label{hypisom}
 For every hyperelliptic connected component $C\subset\mathcal M$ and for every $\zeta\in G$, the vertical flow on $(M,\zeta)$ is isomorphic with its inverse.
 	\end{uw}

 It appears that 
 the connected components of the moduli space can be described by 
 the Rauzy classes of permutations.  Let us recall first  the notion of non-degenericity, as introduced by Veech.
 We say that a permutation $\pi=\{\pi_0,\pi_1\}$ of $\mathcal A$ is \emph{degenerate} if one of the following conditions is satisfied:
 \begin{align}
 \quad &\pi_1\circ\pi_0^{-1}(j+1)=\pi_1\circ\pi_0^{-1}(j)+1\text{ for some } 1\le j< d;\label{deg1}\\
 \quad  &\pi_1\circ\pi_0^{-1}(\pi_0\circ\pi_1^{-1}(d)+1)=\pi_1\circ\pi_0^{-1}(d)+1\label{deg2}\\
 \quad &\pi_0\circ\pi_1^{-1}(1)-1=\pi_0\circ\pi_1^{-1}(\pi_1\circ\pi_0^{-1}(1)-1)\label{deg3}\\
 \quad&\pi_0\circ\pi_1^{-1}(d)=\pi_0\circ\pi_1^{-1}(1)-1\text{ and } \pi_1\circ\pi_0^{-1}(d)=\pi_1\circ\pi_0^{-1}(1)-1\label{deg4}.
 \end{align}
 Otherwise the permutation is called \emph{non-degenerate}. The property of non-degenericity is invariant under the action of 
 the Rauzy-Veech induction.
 The importance of this notion is given by the following theorem.
 \begin{tw}[Veech]
 	The extended Rauzy classes of nondegenerate
 	permutations are in 
 	one-to-one correspondence with the connected
 	components of the strata in the moduli space.
 \end{tw}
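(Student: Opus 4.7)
The plan is to construct the correspondence explicitly via the zippered rectangles construction already outlined earlier in the preliminaries, and then verify well-definedness, surjectivity and injectivity in turn. Concretely, to every non-degenerate permutation $\pi$ of an alphabet $\mathcal{A}$ of size $d$ I associate the parameter space $\Theta_\pi$ and the map $(\pi,\la,\tau)\mapsto M(\pi,\la,\tau)$. A direct count on the sides of the resulting polygon shows that $M(\pi,\la,\tau)$ always lies in one fixed stratum $\mathcal{M}(\kappa(\pi))$ whose singularity data $\kappa(\pi)$ depends combinatorially on $\pi$. Since $\Theta_\pi$ is connected, its image lies in a single connected component $C_\pi$, and I define $\Phi(\pi):=C_\pi$.

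Next I would verify that $\Phi$ factors through extended Rauzy classes. For this I would check two moves separately: (a) if $\pi'=R(\pi)$, then Rauzy--Veech induction gives an explicit piecewise-linear identification $\Theta_\pi\approx \Theta_{\pi'}$ that corresponds, on the surface side, to replacing the horizontal transversal by a shorter one inside the same translation surface; hence $M(\pi,\la,\tau)=M(\pi',\la',\tau')$ and $C_\pi=C_{\pi'}$. (b) If $\pi'=l(\pi)$ then reversing the order of the letters on both rows of the permutation corresponds geometrically to reading the boundary of the polygon in the opposite orientation, which is realised by the Teichm\"uller flow composed with a rotation by $\pi$; this still stays inside $\mathcal{M}(\kappa(\pi))$ and therefore inside the same component. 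Combining (a) and (b), $\Phi$ descends to a map on extended Rauzy classes.

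Surjectivity of $\Phi$ is the content of the representation result already quoted in the preliminaries: for any $\zeta\in \mathcal{M}(\kappa)$ whose vertical flow has no saddle connection, one obtains $\zeta=M(\pi,\la,\tau)$ for some $(\pi,\la,\tau)\in\Theta_\pi$, and almost every rotation $r_\theta$ eliminates saddle connections, so the orbit of $\zeta$ under rotations is hit by the construction. Combining this with the fact that the degenericity of $\pi$ is invariant under $R$ and under $l$, one shows the $\pi$ obtained is non-degenerate, using the fact that each of \eqref{deg1}--\eqref{deg4} produces a splitting of the surface inconsistent with being in a stratum of genus $g\ge 1$ with the expected $\Sigma$.

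For injectivity I would argue that within a fixed connected component the extended Rauzy class of the associated permutation is locally constant: as one moves $\zeta$ continuously while the vertical flow stays without saddle connections, the permutation stays constant; when a saddle connection appears one can reroute using a rotation, and the corresponding change of permutation is realised by a finite sequence of $R$ and $l$ moves. Together with $R$- and $l$-invariance from the previous step, this shows that the fibre of $\Phi$ over a connected component is a single extended Rauzy class. The main obstacle of this whole argument is precisely this last step: controlling how the combinatorial permutation attached to a translation surface changes along a path in the stratum, especially when the path crosses the locus of surfaces with vertical saddle connections or with a coordinate $\la_\al=0$ on the boundary of a chart $\Theta_\pi$. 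This is where the non-degenericity of $\pi$ and the precise inequalities in the definition of $\Theta_\pi$ play a decisive r\^ole, since they exactly rule out the boundary behaviours \eqref{deg1}--\eqref{deg4} which would otherwise allow jumping between distinct extended Rauzy classes.
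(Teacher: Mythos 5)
The paper does not prove this theorem; it is quoted as a known background result attributed to Veech (with the enumeration of extended Rauzy classes by genus due to Kontsevich--Zorich), so there is no internal proof for your sketch to be compared against.

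Taken on its own merits, your outline has the right overall shape — build $\Phi$ via the polygonal construction, check invariance under $R$ and $l$, then argue surjectivity and injectivity — but it leaves a genuine gap at exactly the step you flag as "the main obstacle," and that step is not an obstacle to be waved away by invoking non-degenericity: it \emph{is} Veech's theorem. Specifically, you need to show that, within a single connected component, the permutation attached to a surface varies only by finite sequences of $R$- and $l$-moves. When a vertical saddle connection appears and you reroute by $r_\theta$, different small $\theta$ yield genuinely different permutations, and showing these all lie in one extended Rauzy class requires the full analysis of the Rauzy-diagram and the cell structure of the stratum near the saddle-connection locus that constitutes Veech's original argument. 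Simply asserting the permutation ``changes by $R$ and $l$ moves'' presupposes the conclusion. Two secondary issues: your surjectivity step mischaracterizes degeneracy — a degenerate permutation still yields a translation surface, just one with a removable (angle-$2\pi$) marked point, so it lands in a stratum $\mathcal M(\kappa')$ with a different singularity vector $\kappa'$, not something inconsistent with genus $g\geq 1$; and the $l$-move is not ``the Teichm\"uller flow composed with a rotation by $\pi$,'' it is a relabelling of the polygonal data corresponding to $-\operatorname{Id}\in SL_2(\re)$, which stays in the same component because $SL_2(\re)$ is connected — the Teichm\"uller geodesic flow $\operatorname{diag}(e^t,e^{-t})$ plays no role there.
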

In view of the above theorem, for each genus $g\ge 2$, the hyperelliptic components $\mathcal M^{hyp}(2g-2)$ and $\mathcal M^{hyp}(g-1,g-1)$ correspond to the extended Rauzy classes of symmetric permutations of $2g$ and $2g-1$ elements respectively.
\begin{uw}\label{less5}
Connected components which are associated with extended Rauzy graphs of permutations of $d\le 5$ elements are hyperelliptic.
	\end{uw}
For 
a given extended Rauzy class $\mathcal R$, let $C_{\mathcal R}$ be its associated connected component of the moduli space.
 Then for any $\pi\in\mathcal R$ the map $M:\Theta_\pi\to C_{\mathcal R}$ given by $(\pi,\la,\tau)\mapsto M(\pi,\la,\tau)$ is continuous and the range of the map $M$ is dense in $C_{\mathcal R}$.
  Moreover, recall that, due to Theorem \ref{twpierost}, for every connected component of the moduli space we can find a permutation $\bar\pi$ belonging to the corresponding extended Rauzy class,
  satisfying 
  $\bar\pi_1\circ\bar\pi_0^{-1}(d)=1$ and 
  $\bar\pi_0\circ\bar\pi_1^{-1}(d)=1$. Hence, to prove that some condition is satisfied for a dense set of translation structures in $C_{\mathcal R}$, it is enough to prove that it holds for translation structures, whose associated polygonal parameters belong to a dense subset  of $\Theta_{\bar\pi}$.

Let $\mathcal{R}$ be any extended Rauzy class.  Let us  consider a transformation
$\tilde R:\bigcup_{\pi\in \mathcal{R}}\Theta_\pi\mapsto \bigcup_{\pi\in \mathcal{R}}\Theta_\pi$ called a  \emph{polygonal Rauzy Veech induction} (or \emph{righthand side polygonal Rauzy Veech induction}) which
yields different parameters of a translation surface.

Let $\pi\in \mathcal{R}$ and let $(\pi,\la,\tau)\in\Theta_\pi$. Assume that $\la_{\pi_0^{-1}(d)}\neq\la_{\pi_1^{-1}(d)}$. If $\la_{\pi_0^{-1}(d)}<\la_{\pi_1^{-1}(d)}$, then for any $a\in\mathcal A$ define
\begin{align*}
\tilde\pi_0(a)&:=\begin{cases}
\pi_0(a)&\text{ if }\quad\pi_0(a)\le\pi_0(\pi_1^{-1}(d));\\
\pi_0(\pi_1^{-1}(d))+1&\text{ if }\quad \pi_0(a)=d;\\
\pi_0(a)+1&\text{ if }\quad\pi_0(\pi_1^{-1}(d))<\pi_0(a)\le d-1,
\end{cases}
\\
\tilde\pi_1(a)&:=\pi_1(a),\\
\tilde{\la}_a&:=\begin{cases}
\la_{\pi_1^{-1}(d)}-\la_{\pi_0^{-1}(d)}&\text{ if }\quad\pi_1(a)=d;\\
\la_a&\text{ otherwise},
\end{cases}
\\
\tilde\tau_a&:=\begin{cases}
\tau_{\pi_1^{-1}(d)}-\tau_{\pi_0^{-1}(d)}&\text{ if }\quad\pi_1(a)=d;\\
\tau_a&\text{ otherwise}.
\end{cases}
\end{align*}
Analogously, if $\la_{\pi_0^{-1}(d)}>\la_{\pi_1^{-1}(d)}$, we define
\begin{align*}
\tilde\pi_0(a)&:=\pi_0(a),
\\
\tilde\pi_1(a)&:=\begin{cases}
\pi_1(a)&\text{ if }\quad\pi_1(a)\le\pi_1(\pi_0^{-1}(d));\\
\pi_1(\pi_0^{-1}(d))+1&\text{ if }\quad \pi_1(a)=d;\\
\pi_1(a)+1&\text{ if }\quad\pi_1(\pi_0^{-1}(d))<\pi_1(a)\le d-1,
\end{cases}
\\
\tilde{\la}_a&:=\begin{cases}
\la_{\pi_0^{-1}(d)}-\la_{\pi_1^{-1}(d)}&\text{ if }\quad\pi_0(a)=d;\\
\la_a&\text{ otherwise},
\end{cases}
\\
\tilde\tau_a&:=\begin{cases}
\tau_{\pi_0^{-1}(d)}-\tau_{\pi_1^{-1}(d)}&\text{ if }\quad\pi_0(a)=d;\\
\tau_a&\text{ otherwise}.
\end{cases}
\end{align*}
We define $\tilde R$ by setting $\tilde R(\pi,\la,\tau):=(\tilde\pi,\tilde\la,\tilde\tau)$. It is defined almost everywhere on $\bigcup_{\pi\in \mathcal{R}}\Theta_\pi$ and if $M(\pi,\la,\tau)$ admits no saddle connection, it can be iterated indefinitely.
Similarly, we can also define a left hand side polygonal Rauzy Veech induction. Note that the polygons derived from $(\pi,\la,\tau)$ and $(\tilde{\pi},\tilde{\la},\tilde{\tau})$ represent the same translation surface, i.e. $M(\pi,\la,\tau)=M(\tilde\pi,\tilde\la,\tilde\tau)$.
Indeed, the latter is obtained from $(\pi,\la,\tau)$ by cutting out the triangle formed by the last top side and the last bottom side and gluing it to a side of a polygon which is identified with one of the two sides forming the triangle.

 Every $\zeta\in C_\pi$ which does not have vertical saddle-connections can be represented as $M(\pi,\la,\tau)$, for some $(\pi,\la,\tau)\in\Theta_\pi$. We can consider 
 the metric on the neighbourhood of $M(\pi,\la,\tau)$ on $\mathcal M(M,\Sigma,\kappa)$ given by
 \[
 d_{Mod}\big((\pi,\la',\tau'),(\pi,\la'',\tau'')):=\sum_{a\in\mathcal A}(|\la_a'-\la_a''|+|\tau_a'-\tau_a''|).
 \]
 If $\zeta$ admits vertical saddle-connections, we can apply $r_\theta$ for some $\theta\in\re/\z$, so that $r_\theta^*\zeta$ does not have vertical saddle-connections and then define a metric in the neighbourhood of $\zeta$.

For any $\zeta=M(\pi,\la,\tau)\in\mathcal M(M,\Sigma,\kappa)$ we can consider a special representation of the vertical flow on $(M,\zeta)$. The basis of this special flow is the IET $T_{\pi,\la}$ and the roof function $h$ is positive and constant over exchanged intervals. Hence $h$ can be considered as a vector $(h_a)_{a\in\mathcal A}\in\re_{>0}^{\mathcal A}$, where $h_a$ is the value of $h$ over the exchanged interval labelled by $a$. The vector $h$ is given by the formula
\begin{equation}\label{roof}
h=-\Omega_\pi\tau,
\end{equation}
where $\Omega_\pi$ is 
the translation matrix of $(\pi,\la)$.
This gives rise to  new local coordinates of the moduli space. In particular, the polygonal Rauzy-Veech induction receives a new form.
Namely, if $\la_{\pi_0^{-1}(d)}\neq \la_{\pi_1^{-1}(d)}$ then $\tilde R(\pi,\la,h)=(\tilde\pi,\tilde\la,\tilde h)$, where the formulas for $\tilde\pi$ and $\tilde\la$ remain unchanged  and for any $a\in\mathcal{A}$ we take
\[
\tilde h_a:=\begin{cases}
h_{\pi_0^{-1}(d)}+h_{\pi_1^{-1}(d)}&\text{if }a=\pi_0^{-1}(d)\text{ and }\la_{\pi_0^{-1}(d)}<\la_{\pi_1^{-1}(d)};\\
h_{\pi_0^{-1}(d)}+h_{\pi_1^{-1}(d)}&\text{if }a=\pi_1^{-1}(d)\text{ and }\la_{\pi_0^{-1}(d)}>\la_{\pi_1^{-1}(d)};\\
h_a&\text{otherwise.}
\end{cases}
\]

\section{Consequences of limit joinings}\label{sec:join}
In this section, we formulate a criterion for two flows to be disjoint, and a criterion for a flow to be weakly mixing.
Both criteria rely on the properties of the weak limit of 
some sequence of 3-off diagonal joinings.

For every measure $\la\in\mathcal P(X\times Y)$, we denote by $\la|_X$ and $\la|_Y$ the projections of $\la$ on $X$ and $Y$ respectively, that is for every measurable subsets $A\subseteq X$ and $B\subseteq Y$ we have
\[
\la|_X(A)=\la(A\times Y)\quad\text{and}\quad\la|_Y(B)=\la(X\times B).
\]

Let $\mathcal T=\{T_t\}_{t\in\re}$ and $\mathcal S=\{S_t\}_{t\in\re}$ be weakly mixing flows acting on standard Borel spaces $(X,\mathcal B,\mu)$ and $(Y,\mathcal C,\nu)$ respectively.
\begin{lm}\label{podst}
Let $\la\in J^e(\mathcal T, \mathcal S)$. Let $\rho\in J^e_2(\mathcal T\times \mathcal S,\la)$, which is defined on $X_1\times Y_1\times X_2\times Y_2$ with $X_1=X_2=X$ and $Y_1=Y_2=Y$.
Assume that for some $r,r'\in\re$ we have $\rho|_{X_1\times X_2}=\mu_{T_r}$ and $\rho|_{Y_1\times Y_2}=\nu_{S_{r'}}$. If $r\neq r'$ then $\la=\mu\otimes\nu$.
\end{lm}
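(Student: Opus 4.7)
The plan is to translate the graph-joining constraints on the marginals of $\rho$ into a non-diagonal invariance of $\lambda$, and then exploit weak mixing of $\mathcal S$ to deduce $\lambda = \mu \otimes \nu$.

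First, the assumption $\rho|_{X_1 \times X_2} = \mu_{T_r}$ (a graph joining, supported on $\{(x, T_r x)\}$) means that $\rho$-almost surely $x_2 = T_r x_1$, and likewise $\rho|_{Y_1 \times Y_2} = \nu_{S_{r'}}$ forces $y_2 = S_{r'} y_1$ almost surely. Combined, $\rho$ is concentrated on the graph $\{(x,y,T_r x, S_{r'} y) : (x,y) \in X \times Y\}$; since the projection of $\rho$ on the first two coordinates is $\lambda$ (as $\rho$ is a 2-self joining of $\lambda$), this gives $\rho = \Theta_* \lambda$, where $\Theta(x,y) := (x,y,T_r x, S_{r'} y)$. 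The projection of $\rho$ onto $X_2 \times Y_2$ must also equal $\lambda$, but this projection is by construction $(T_r \times S_{r'})_* \lambda$. Thus $(T_r \times S_{r'})_* \lambda = \lambda$, and combining this with the joining identity $(T_r \times S_r)_* \lambda = \lambda$, I obtain $(Id \times S_{r'-r})_* \lambda = \lambda$.

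To extract the product structure from this off-diagonal invariance, I would fix a measurable set $A \subseteq X$ and consider the finite measure $\nu_A(C) := \lambda(A \times C)$ on $Y$. Since $\nu_A \le \nu$, it is absolutely continuous with respect to $\nu$, and the invariance of $\lambda$ under $Id \times S_{r'-r}$ yields $S_{r'-r}$-invariance of $\nu_A$, hence of its Radon--Nikodym derivative $d\nu_A/d\nu$. Under the hypothesis $r \neq r'$ and the weak mixing of $\mathcal S$, the single transformation $S_{r'-r}$ is ergodic, so this derivative is $\nu$-almost everywhere constant, equal to $\nu_A(Y)/\nu(Y) = \mu(A)$. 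Therefore $\lambda(A \times C) = \mu(A)\nu(C)$ for all measurable $A$ and $C$, that is, $\lambda = \mu \otimes \nu$.

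The only genuinely non-formal step is the passage from weak mixing of the flow $\mathcal S$ to ergodicity (in fact weak mixing) of the time-$(r'-r)$ map $S_{r'-r}$. This is a standard spectral consequence of the definition of weak mixing for flows, but it is the place where the hypothesis $r \neq r'$ enters decisively; everything else is a purely formal manipulation of marginals.
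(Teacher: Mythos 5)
Your proof is correct and follows essentially the same route as the paper: first deduce $(T_r\times S_{r'})_*\lambda=\lambda$ from the fact that the two marginals of $\rho$ are graph joinings (the paper verifies this through condition (3)$\Rightarrow$(2) of Remark \ref{graph} and an explicit estimate, while you note directly that $\rho$ must be concentrated on the pullback of the two graphs and hence equals $\Theta_*\lambda$ — the same observation in cleaner language), then pass to $(Id\times S_{r'-r})$-invariance of $\lambda$ and conclude via ergodicity of $S_{r'-r}$. The only cosmetic difference is that for the last step you reprove, via Radon--Nikodym derivatives, the disjointness of the identity from an ergodic map which the paper simply invokes as Remark \ref{ident}.
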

\begin{proof}
First we prove that $\la=(T_r\times S_{r'})_*\la$. We show that (3) in  Remark \ref{graph} is satisfied for the $\pi$-system of product sets 
and the isomorphism $\phi:=T_{-r}\times S_{-r'}$ between $(X_1\times Y_1,\la)$ and $(X_2\times Y_2,\la)$. In other words, for every $A\in\mathcal B$ and $B\in\mathcal C$ we have 
\[
\rho(A\times B\times (T_{-r}\times S_{-r'})(A\times B)^c)=\rho((A\times B)^c\times (T_{-r}\times S_{-r'})(A\times B))=0.
\]
Indeed, recall that $\mu_r$ and $\nu_{r'}$ are graph joinings of $\mathcal T$ and $\mathcal S$ given by $T_{-r}$ and $S_{-r'}$ respectively. By Remark \ref{graph} this implies that for every $A\in\mathcal B$ and $B\in\mathcal C$ we have
\[
\mu_{r}(A\times T_{-r}A^c)=0\quad\text{and}\quad\nu_{r'}(B\times T_{-r}B^c)=0.
\]
Thus we obtain
\[
\begin{split}
\rho(A\times B&\times (T_{-r}\times S_{-r'})(A\times B)^c)=\rho(A\times B\times T_{-r}A^c\times S_{-r'}B)\\&+\rho(A\times B\times T_{-r}A^c\times S_{-r'}B^c)
+\rho(A\times B\times T_{-r}A\times S_{-r'}B^c)\\
&\le2\rho(A\times Y\times T_{-r}A^c\times Y)+\rho(X\times B\times X\times S_{-r'}B^c)\\
&=2\mu_r(A\times T_{-r}A^c)+\nu_{r'}(B\times S_{-r'}B^c)=0
\end{split}
\]
and
\[
\begin{split}
\rho((A\times B)^c&\times T_{-r}A\times S_{-r'}B)=\rho(A^c\times B\times T_{-r}A\times S_{-r'}B)\\&+\rho(A^c\times B^c\times T_{-r}A\times S_{-r'}B)
+\rho(A\times B^c\times T_{-r}A\times S_{-r'}B)\\
&\le2\rho(A^c\times Y\times T_{-r}A\times Y)+\rho(X\times B^c\times X\times S_{-r'}B)\\
&=2\mu_r(A^c\times T_{-r}A)+\nu_{r'}(B^c\times S_{-r'}B)=0.
\end{split}
\]
Hence we have proved that (3) in Remark \ref{graph} is satisfied for 
the $\pi$-system of product sets. Since $\rho\in J^e_2(\mathcal T\times \mathcal S,\la)$, in view of (2) in Remark \ref{graph} we get 
\[
\begin{split}
\la(A\times B)&=\rho(A\times B\times X\times Y)=\rho(X\times\ Y\times T_{-r}A\times S_{-r'}B)
\\&=\la(T_{-r}A\times S_{-r'}B)=(T_r\times S_{r'})_*\la(A\times B),
\end{split}
\]
for all $A\in\mathcal B$ and $B\in\mathcal C$. Since the $\pi$-system of product sets generates $\mathcal B\otimes\mathcal C$, we get that the measures $\la$ and $(T_r\times S_{r'})_*\la$ are equal.
By the $(\mathcal T\times \mathcal S)\,$-invariance of $\la$, we have that $\la$ is $(Id\times S_{r-r'})\,$-invariant.
By weak mixing of $\mathcal S$, $S_{r-r'}$ is ergodic whenever $r\neq r'$. Since $Id$ is disjoint with every ergodic transformation (see Remark \ref{ident}), we get $\la=\mu\otimes\nu$.
\end{proof}
\begin{prop}\label{kryterium}
Assume that for some real sequences $(a_n)_{n\in\n}$ and $(b_n)_{n\in\n}$ we have
\[
\mu_{a_n,b_n}\to(1-\al)\int_{\re^2}\mu_{-t,-u}dP(t,u)+\al\xi_1,
\]
and
\[
\nu_{a_n,b_n}\to(1-\al')\int_{\re^2}\nu_{-t,-u}dQ(t,u)+\al'\xi_2,
\]
for some $0\leq\al,\al'< 1$, measures $P,Q\in\mathcal P(\re^2)$ and $\xi_1\in J_3(\mathcal T)$, $\xi_2\in J_3(\mathcal S)$. Assume moreover, that there exists a set $B\in\mathcal B(\re^2)$, such that
\begin{equation}\label{setB}
(1-\al)P(B)-(1-\al')Q(B)>\al'.
\end{equation}
Then $\mathcal T$ and $\mathcal S$ are disjoint.

\end{prop}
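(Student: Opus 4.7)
The plan is a proof by contradiction. Assume there is a joining $\la\in J(\mathcal T,\mathcal S)$ with $\la\ne\mu\otimes\nu$; by the ergodic decomposition and extremality, I may take $\la\in J^e(\mathcal T,\mathcal S)$. The goal is to use the hypothesis on $\mu_{a_n,b_n}$ and $\nu_{a_n,b_n}$ to produce an ergodic 2-self-joining of $\mathcal T\times\mathcal S$ over $\la$ whose $X^2$- and $Y^2$-marginals are off-diagonals with distinct parameters; Lemma \ref{podst} will then force $\la=\mu\otimes\nu$, a contradiction. Consider therefore, for each $n$, the 3-off-diagonal joining $\la_{a_n,b_n}\in J_3(\mathcal T\times\mathcal S,\la)$ of the product flow acting on $(X\times Y,\la)$. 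By weak compactness, along a subsequence $\la_{a_n,b_n}\to\eta$ for some $\eta\in J_3(\mathcal T\times\mathcal S,\la)$. Writing $\pi_X,\pi_Y$ for the natural projections from $(X\times Y)^3$ onto $X^3$ and $Y^3$, the $\pi_X$-pushforward of $\la_{a_n,b_n}$ is $\mu_{a_n,b_n}$, hence the two hypotheses give the explicit marginals of $\eta$:
\[
(\pi_X)_*\eta=(1-\al)\!\int_{\re^2}\!\mu_{-t,-u}\,dP(t,u)+\al\xi_1,\quad (\pi_Y)_*\eta=(1-\al')\!\int_{\re^2}\!\nu_{-s,-v}\,dQ(s,v)+\al'\xi_2.
\]

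Next I would ergodically decompose $\eta=\int\rho\,d\kappa(\rho)$ with $\rho\in J^e_3(\mathcal T\times\mathcal S,\la)$. Since factor maps preserve ergodicity, $(\pi_X)_*\kappa$ and $(\pi_Y)_*\kappa$ are the unique ergodic decompositions of $(\pi_X)_*\eta$ and $(\pi_Y)_*\eta$. By weak mixing of $\mathcal T$ and $\mathcal S$ every off-diagonal $\mu_{-t,-u}$ and $\nu_{-s,-v}$ is ergodic, so decomposing $\xi_1,\xi_2$ into their own ergodic components and invoking uniqueness gives
\[
\kappa\bigl\{\rho:(\pi_X)_*\rho=\mu_{-t,-u},\ (t,u)\in B\bigr\}\geq (1-\al)P(B).
\]
The key observation is that inside $J^e_3(\mathcal S)$ the two events ``$(\pi_Y)_*\rho=\nu_{-s,-v}$ with $(s,v)\in B$'' and ``$(\pi_Y)_*\rho$ is not an off-diagonal'' are disjoint, so the $\xi_2$-contribution to their union is bounded by $\al'$, yielding
\[
\kappa\bigl\{\rho:(\pi_Y)_*\rho=\nu_{-s,-v}\text{ with }(s,v)\in B,\text{ or }(\pi_Y)_*\rho\text{ not off-diagonal}\bigr\}\leq (1-\al')Q(B)+\al'.
\]

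Combining these bounds with the hypothesis $(1-\al)P(B)-(1-\al')Q(B)>\al'$, the sum of the first $\kappa$-mass and the complement of the second strictly exceeds $1$, so there is a $\kappa$-positive-measure set of $\rho$'s for which both $(\pi_X)_*\rho=\mu_{-t,-u}$ with $(t,u)\in B$ and $(\pi_Y)_*\rho=\nu_{-s,-v}$ with $(s,v)\notin B$. In particular $(t,u)\ne(s,v)$, so $t\ne s$ or $u\ne v$. By \eqref{doop}, the corresponding projection $\Pi_{1,3}(\rho)$ or $\Pi_{2,3}(\rho)$ is an ergodic 2-self-joining of $\mathcal T\times\mathcal S$ over $\la$ whose $X^2$-marginal is $\mu_{-t}$ (resp.\ $\mu_{-u}$) and whose $Y^2$-marginal is $\nu_{-s}$ (resp.\ $\nu_{-v}$), with distinct real parameters; Lemma \ref{podst} then forces $\la=\mu\otimes\nu$, the sought contradiction. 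The main obstacle I anticipate is the careful double accounting in the middle paragraph: one must simultaneously track the ``diagonal'' part of each marginal's ergodic decomposition and the unknown contributions from $\xi_1,\xi_2$, and recognise that the events ``off-diagonal in $B$'' and ``not off-diagonal'' for $(\pi_Y)_*\rho$ are disjoint inside $J^e_3(\mathcal S)$, so their joint $\xi_2$-mass is at most $\al'$---this disjointness is exactly what the single $\al'$ on the right-hand side of the hypothesis is calibrated to exploit.
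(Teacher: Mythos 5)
Your proof is correct, and the overall strategy matches the paper's: pass to a weak limit $\eta$ of the $3$-off-diagonal joinings $\la_{a_n,b_n}$ of $\mathcal T\times\mathcal S$ over an ergodic joining $\la$, ergodically decompose $\eta=\int\psi\,d\kappa(\psi)$, use the known marginals of $\eta$ to control $\kappa$, locate a $\kappa$-positive set of $\psi$ whose $X^3$- and $Y^3$-marginals are off-diagonals with distinct parameters, and conclude via Lemma~\ref{podst} applied to $\Pi_{1,3}(\psi)$ or $\Pi_{2,3}(\psi)$. Where you genuinely improve on the paper is the counting step. The paper first performs a WLOG reduction (replacing $P,Q,\al,\al'$ by $\bar P,\bar Q,\al\beta,\al'\beta'$ so that $\kappa_1(\mathcal A_1)=\kappa_2(\mathcal A_2)=0$), and then proves $\kappa(\mathcal A)>0$ by contradiction, deriving \eqref{end1}--\eqref{end2} from the assumption $\kappa(\mathcal A)=0$. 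You bypass both: the lower bound $\kappa(E_1)\ge(1-\al)P(B)$ is automatic since any off-diagonal mass hiding in $\xi_1$ can only increase it, and the upper bound $\kappa(E_2)\le(1-\al')Q(B)+\al'$ follows from noting that $h^{\mathcal S}(B)$ and $J_3^e(\mathcal S)\setminus\mathcal A_2$ are disjoint subsets of $J_3^e(\mathcal S)$, so their combined $\kappa_2$-mass is at most $1$; then $\kappa(E_1\cap E_2^c)\ge\kappa(E_1)-\kappa(E_2)>0$ directly, no reductio needed. Two technical points you leave implicit but must hold for the argument to close: (i) the injectivity of $(s,v)\mapsto\nu_{-s,-v}$ on $\re^2$ (a consequence of aperiodicity, hence of weak mixing) is required so that $h^{\mathcal S}_*Q(h^{\mathcal S}(B))$ equals $Q(B)$ rather than merely dominates it, which is precisely what makes the upper bound on $\kappa(E_2)$ go in the right direction; and (ii) measurability of the sets $h^{\mathcal T}(B)$, $h^{\mathcal S}(B)$ and $\mathcal A_2$, for which the paper invokes Souslin's theorem.
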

\begin{uw}
The above proposition can be also proven in higher dimensional case, that is when we consider limits of joinings of higher rank.
\end{uw}
\begin{proof}[Proof of Proposition \ref{kryterium}]
Let $\xi_1=\int_{J_3^e(\mathcal T)}\rho^{\mathcal T} d \kappa_1(\rho^{\mathcal T})$ and $\xi_2=
\int_{J_3^e(\mathcal S)}\rho^{\mathcal S} d\kappa_2(\rho^{\mathcal S})$ be the ergodic decompositions of $\xi_1$ and $\xi_2$ respectively. Let also $\mathcal A_1$ be the set of $3$-off-diagonal joinings in $J_3^e(\mathcal T)$ and $\mathcal A_2$ be the set of $3$-off-diagonal joinings in $J_3^e(\mathcal S)$. In view of Souslin theorem the sets $\mathcal A_1$ and $\mathcal A_2$ are measurable. We can assume that $\kappa_1(\mathcal A_1)=\kappa_2(\mathcal A_2)=0$. Indeed, let $\beta:=1-\kappa_1(\mathcal A_1)\ge 0$ and $\beta':=1-\kappa_2(\mathcal A_2)\ge 0$. Then
\[
\xi_1=(1-\beta)\int_{\re^2}\mu_{-t,-u}dP'(t,u)+\beta\xi_1'
\]
and
\[
\xi_2=(1-\beta')\int_{\re^2}\nu_{-t,-u}dQ'(t,u)+\beta'\xi_2',
\]
where $\xi_1'\in J_3(\mathcal T)$ and $\xi_2'\in J_3(\mathcal S)$ do not have 3-off-diagonal joinings in their ergodic decomposition.
Then
\[
\begin{split}
\mu_{a_n,b_n}&\to(1-\al)\int_{\re^2}\mu_{-t,-u}dP(t,u)+\al\big((1-\beta)\int_{\re^2}\mu_{-t,-u}dP'(t,u)+\beta\xi_1'\big)\\
&=(1-\al\beta)\int_{\re^2}\mu_{-t,-u}d(\frac{1-\al}{1-\al\beta}P+\frac{\al(1-\beta)}{1-\al\beta}P')+\al\beta\xi_1'\\
&=(1-\al\beta)\int_{\re^2}\mu_{-t,-u}d\bar P+\al\beta\xi_1',
\end{split}
\]
where $\bar P=\frac{1-\al}{1-\al\beta}P+\frac{\al(1-\beta)}{1-\al\beta}P'$. Analogously
\[
\nu_{a_n,b_n}\to(1-\al'\beta')\int_{\re^2}\nu_{-t,-u}d\bar Q+\al'\beta'\xi_2',
\]
where $\bar Q=\frac{1-\al'}{1-\al'\beta'}Q+\frac{\al'(1-\beta')}{1-\al'\beta'}Q'$.
Then for the set $B$ satisfying \eqref{setB} we have
\[
\begin{split}
(1&-\al\beta)\bar P(B)-(1-\al'\beta')\bar Q(B)\\
&=(1-\al)P(B)+\al(1-\beta)P'(B)-(1-\al')Q(B)-\al'(1-\beta')Q'(B)\\
&>\al'+\al(1-\beta)P'(B)-\al'(1-\beta')Q'(B)\ge\al'-\al'(1-\beta')=\al'\beta'.
\end{split}
\]
It is enough then, to replace $P,Q$ by $\bar P,\bar Q$ and $\al,\al'$ by $\al\beta,\al'\beta'$ respectively.

Let $\la\in J^e(\mathcal T,\mathcal S)$. We show that $\la=\mu\otimes\nu$. Consider the sequence $\{\la_{a_n,b_n}\}_{n\in\n}$ in $J_3^e(\mathcal T\times \mathcal S, \la)$. By the compactness of $J_3(\mathcal T\times \mathcal S, \la)$ we have that $\la_{a_n,b_n}\to \eta$ weakly in $J_3(\mathcal T\times \mathcal S, \la)$, up to taking a subsequence. Moreover, by assumptions we have
\[
\eta|_{X_1\times X_2\times X_3}=(1-\al)\int_{\re^2}\mu_{-t,-u}dP(t,u)+\al\xi_1
\]
and
\[
\eta|_{Y_1\times Y_2\times Y_3}=(1-\al')\int_{\re^2}\nu_{-t,-u}dQ(t,u)+\al'\xi_2.
\]
Let $h^\mathcal T:\re^2\to\mathcal A_1$ and $h^\mathcal S:\re^2\to\mathcal A_2$ be given by  $h^\mathcal T(t,u) :=\mu_{-t,-u}$ and $h^\mathcal S(t,u):=\nu_{-t,-u}$. Then
\begin{equation}\label{ergo1}
\eta|_{X_1\times X_2\times X_3}=\int_{J_3^e(\mathcal T)}
\rho^{\mathcal T}\, d((1-\al)h^{\mathcal T}_*P+\al\kappa_1)(\rho^{\mathcal T}),
\end{equation}
and
\begin{equation}\label{ergo2}
\eta|_{Y_1\times Y_2\times Y_3}=\int_{J_3^e(\mathcal S)}
 \rho^{\mathcal S}\, d((1-\al') h^{\mathcal S}_*Q+\al'\kappa_2)(\rho^{\mathcal S}).
\end{equation}

Let now $\eta=\int_{J_3^e(\mathcal T\times \mathcal S,\la)}\psi d\kappa(\psi)$ be the ergodic decomposition of $\eta$. Then we have
\[
\eta|_{X_1\times X_2\times X_3}=\int_{J_3^e(\mathcal T\times \mathcal S,\la)}\psi|_{X_1\times X_2\times X_3} d\kappa(\psi),
\]
and
\[
\eta|_{Y_1\times Y_2\times Y_3}=\int_{J_3^e(\mathcal T\times \mathcal S,\la)}\psi|_{Y_1\times Y_2\times Y_3} d\kappa(\psi).
\]
Since $\psi\in J_3^e(\mathcal T\times\mathcal S)$, we have $\psi|_{X_1\times X_2\times X_3}\in J_3^e(\mathcal T)$ and $\psi|_{Y_1\times Y_2\times Y_3}\in J_3^e(\mathcal S)$. Consider $\Omega^\mathcal T:J_3^e(\mathcal T\times \mathcal S,\la)\to J_3^e(\mathcal T)$ and $\Omega^\mathcal S:J_3^e(\mathcal T\times \mathcal S,\la)\to J_3^e(\mathcal S)$ given by
\[
\Omega^\mathcal T(\psi)=\psi|_{X_1\times X_2\times X_3}\quad\text{ and }\quad\Omega^\mathcal S(\psi)=\psi|_{Y_1\times Y_2\times Y_3}.
\]
We have
\[
\eta|_{X_1\times X_2\times X_3}=\int_{J_3^e(\mathcal T)}\rho^\mathcal T d(\Omega_*^\mathcal T\kappa)(\rho^\mathcal T),
\]
and
\[
\eta|_{Y_1\times Y_2\times Y_3}=\int_{J_3^e(\mathcal S)}\rho^\mathcal S d(\Omega_*^\mathcal S\kappa)(\rho^\mathcal S).
\]
By comparing this with \eqref{ergo1} and \eqref{ergo2} and using the uniqueness of ergodic decomposition we obtain that
\begin{equation}\label{rown}
\Omega_*^\mathcal T\kappa=(1-\al)h_*^\mathcal T P+\al\kappa_1\quad\text{ and }\quad\Omega_*^\mathcal S\kappa=(1-\al')h_*^\mathcal S Q+\al'\kappa_2.
\end{equation}

Let now
\begin{multline*}
\mathcal A:=\{\psi\in J_3^e(\mathcal T\times \mathcal S,\la): \exists t,u,t',u'\in\re, (t,u)\neq(t',u'), \\
\psi|_{X_1\times X_2\times X_3}=\mu_{-t,-u},\psi|_{Y_1\times Y_2\times Y_3}=\nu_{-t',-u'}\}.
\end{multline*}

We now show that $\kappa(\mathcal A)>0$. For any measurable subsets $C\subset J_3^e(\mathcal T)$ and $D\subset J_3^e(\mathcal S)$ denote by $C\bar\times D$ the set of all $\psi\in J_3^e(\mathcal T\times \mathcal S,\la)$ such that $\psi|_{X_1\times X_2\times X_3}\in C$ and $\psi|_{Y_1\times Y_2\times Y_3}\in D$.

Assume that $\kappa(\mathcal A)=0$. Let $B$ be the set satisfying \eqref{setB}. If $(t,u)\in B$ then by the definition of $h^\mathcal T$ and $h^\mathcal S$ we have $\mu_{-t,-u}\in h^\mathcal T(B)$ and $\nu_{-t,-u}\in h^\mathcal S(B)$. Moreover $\kappa(\mathcal A)=0$ and $\ h^\mathcal T(B)\bar\times(\mathcal A_2\setminus h^\mathcal S(B))
\subset \mathcal A$ yield
\begin{equation}\label{enough}
\kappa(h^\mathcal T(B)\bar\times\mathcal A_2)=\kappa(h^\mathcal T(B)\bar\times h^\mathcal S(B)).
\end{equation}
Note that $\kappa_1(h^{\mathcal T}(B))\le \kappa_1(\mathcal A_1)=0$.
Hence, \eqref{rown} and \eqref{enough} implies
\begin{equation}\label{end1}
\begin{split}
(1-\al)P(B)&=(1-\al)h_*^\mathcal T P(h^\mathcal T(B))=[(1-\al)h_*^\mathcal TP+\al\kappa_1](h^\mathcal T(B))\\&=\Omega_*^\mathcal T\kappa(h^\mathcal T(B))=\kappa(h^\mathcal T(B)\bar\times J_3^e(\mathcal S))\\&=\kappa(h^\mathcal T(B)\bar\times \mathcal A_2)+\kappa(h^\mathcal T(B)\bar\times \mathcal (J_3^e(\mathcal S)\setminus\mathcal A_2))\\&=\kappa(h^\mathcal T(B)\bar\times h^\mathcal S(B))+\kappa(h^\mathcal T(B)\bar\times(J_3^e(\mathcal S)\setminus\mathcal A_2)).
\end{split}
\end{equation}
Analogously we 
also obtain
\begin{equation}\label{end2}
(1-\al')
Q(B)=(1-\al')h_*^\mathcal S Q(h^\mathcal S(B))=\kappa(h^\mathcal T(B)\bar\times h^\mathcal S(B))+\kappa((J_3^e(\mathcal T)\setminus\mathcal A_1)\bar\times h^\mathcal S(B)).
\end{equation}
Moreover, in view of \eqref{rown} we get
\[
\begin{split}
\kappa(h^\mathcal T(B)\bar\times(J_3^e(\mathcal S)\setminus\mathcal A_2))&\le\kappa(J_3^e(\mathcal T)\bar\times(J_3^e(\mathcal S)\setminus\mathcal A_2))\\&=\Omega_*^\mathcal S\kappa(J_3^e(\mathcal S)\setminus\mathcal A_2)=\al'\kappa_2(J_3^e(\mathcal S)\setminus\mathcal A_2)= \al'.
\end{split}
\]
Since $(1-\al)P(B)-(1-\al')Q(B)>\al'$, by substracting \eqref{end1} and \eqref{end2} we obtain
\[
\begin{split}
\al'&<(1-\al)h_*^\mathcal T P(h^\mathcal T(B))-(1-\al')h_*^\mathcal T Q(h^\mathcal S(B))\\&=(\kappa(h^\mathcal T(B)\bar\times h^\mathcal S(B))+\kappa(h^\mathcal T(B)\bar\times\mathcal (J_3^e(\mathcal S)\setminus\mathcal A_2)))\\&
-(\kappa(h^\mathcal T(B)\bar\times h^\mathcal S(B))+\kappa((J_3^e(\mathcal T)\setminus\mathcal A_1)\bar\times h^\mathcal S(B)))\\&=\kappa(h^\mathcal T(B)\bar\times(J_3^e(\mathcal S)\setminus\mathcal A_2))-\kappa((J_3^e(\mathcal T)\setminus\mathcal A_1)\bar\times h^\mathcal S(B))\le\al',
\end{split}
\]
which is a contradiction. This yields $\kappa(\mathcal A)>0$ and hence $\mathcal A$ is non-empty.
Therefore, there exists $\psi\in\mathcal A\subset J_3^e(\mathcal T\times \mathcal S,\la)$ such that
$\psi|_{X_1\times X_2\times X_3}=\mu_{t,u}$ and $\psi|_{Y_1\times Y_2\times Y_3}=\nu_{t',u'}$
with $(t,u)\neq(t',u')$. Assume that $t\neq t'$ (the case when $u\neq u'$ is analogous). 
Then $\phi:=\Pi_{1,3}(\psi)\in J_2^e(\mathcal T\times \mathcal S,\la)$ satisfies
\[
\phi|_{X_1\times X_3}=\mu_t\quad\text{ and }\quad\phi|_{Y_1\times Y_3}=\nu_{t'}.
\]
Thus, by Lemma \ref{podst}, $\la=\mu\otimes \nu$.

\end{proof}
The above criterion strengthens the results obtained in \cite{BerkFr}, that is the flows described in this paper are not only non-isomorphic with their inverses, but also disjoint. To prove the main result of this paper, we use the following simplified version of 
Proposition~\ref{kryterium}.
\begin{wn}\label{kryterium1}
	Let $\mathcal T=\{T_t\}_{t\in\re}$ and $\mathcal S=\{S_t\}_{t\in\re}$ be weakly mixing flows acting on the standard Borel spaces $(X,\mathcal B,\mu)$ and $(Y,\mathcal C,\nu)$ respectively.
	Assume that for some real sequences $(a_n)_{n\in\n}$ and $(b_n)_{n\in\n}$ we have
	\[
	\mu_{a_n,b_n}\to\int_{\re^2}\mu_{-t,-u}dP(t,u)\
	\text{and}\
	\nu_{a_n,b_n}\to\int_{\re^2}\nu_{-t,-u}dQ(t,u),
	\]
	for some measures $P,Q\in\mathcal P(\re^2)$. If $P\neq Q$, then $\mathcal T$ and $\mathcal S$ are disjoint.
\end{wn}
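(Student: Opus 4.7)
The plan is to derive this corollary as a direct specialization of Proposition~\ref{kryterium}, taking $\al=\al'=0$. Under this choice, the two convergent sequences of $3$-off-diagonal joinings become pure integral off-diagonal joinings, so that the ``error terms'' $\xi_1,\xi_2$ in Proposition~\ref{kryterium} need not be specified, and condition~\eqref{setB} simplifies to the requirement that there exists a Borel set $B\subseteq\re^2$ with
\[
P(B)-Q(B)>0.
\]

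Hence the only thing to verify is that such a set $B$ exists whenever $P\neq Q$. Since $P$ and $Q$ are distinct Borel probability measures on $\re^2$, there is some Borel set $B_0\in\mathcal B(\re^2)$ with $P(B_0)\neq Q(B_0)$. If $P(B_0)>Q(B_0)$, take $B:=B_0$. Otherwise $P(B_0)<Q(B_0)$, and since both measures are probabilities, the complement satisfies
\[
P(B_0^c)=1-P(B_0)>1-Q(B_0)=Q(B_0^c),
\]
so we may take $B:=B_0^c$. In either case $P(B)>Q(B)$, and the hypotheses of Proposition~\ref{kryterium} are met with $\al=\al'=0$. Applying Proposition~\ref{kryterium} then yields that $\mathcal T$ and $\mathcal S$ are disjoint, which is exactly the conclusion of Corollary~\ref{kryterium1}. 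There is no real obstacle here; the whole content of the corollary is the observation that Proposition~\ref{kryterium} specialises cleanly when the limit joinings are purely integral off-diagonal, and the only small verification is the passage from $P\neq Q$ to the existence of a set on which $P$ strictly dominates $Q$.
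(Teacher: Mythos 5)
Your proposal is correct and is exactly the route the paper intends: the paper introduces Corollary~\ref{kryterium1} explicitly as ``the following simplified version of Proposition~\ref{kryterium}'' without spelling out the proof, and your specialization $\al=\al'=0$ together with the observation that $P\neq Q$ yields a Borel set $B$ with $P(B)>Q(B)$ (passing to the complement if necessary) is precisely the verification that is left to the reader.
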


Let $\xi:\re^2\to\re$ be given by $\xi(t,u):=t-2u$. The following result gives 
a condition on limit joinings which imply weak mixing of a flow.

		\begin{prop}\label{weakmix}
			Let $\mathcal T=\{T_t\}_{t\in\re}$ be an ergodic flow on $(X,\mathcal B,\mu)$ and assume that there exists a real increasing sequence $\{b_n\}_{n\in\n}$, a real number $\rho\in[0,1)$ and a probability measure $P\in\mathcal P(\re^2)$ such that
			\begin{equation}\label{glownazbiez}
			\mu_{2b_n,b_n}\to(1-\rho)\int_{\re^2}\mu_{-t,-u}dP(t,u)+\rho\psi,
			\end{equation}
			for some $\psi\in J_3(\mathcal T)$. If $P$ is not supported on an affine lattice in $\re^2$ then $\mathcal T$ is weakly mixing.
			In particular, if 
			there exist two rationally independent real numbers $d_1$ and $d_2$ such that $d_1,d_2$ and $0$ are atoms of $\xi_*P$,
			then the flow $\mathcal T$ is weakly mixing.
		\end{prop}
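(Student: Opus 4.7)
The plan is to argue by contraposition. Suppose $\mathcal T$ is not weakly mixing; then there exists an eigenvalue $\alpha\in\re\setminus\{0\}$ and a measurable eigenfunction $f\colon X\to\mathbb C$ with $f\circ T_t=e^{i\alpha t}f$ for every $t\in\re$. Ergodicity of $\mathcal T$ forces $|f|$ to be $\mathcal T$-invariant and hence constant, so after normalization we may assume $|f|\equiv 1$.

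The key step is to probe the convergence \eqref{glownazbiez} with the bounded measurable function $F\colon X^3\to\mathbb C$ defined by
\[
F(x_1,x_2,x_3):=f(x_1)\,\bar f(x_2)^2\,f(x_3).
\]
The choice of exponents $(1,-2,1)$ is dictated by the time ratio $2{:}1{:}0$ appearing in $\mu_{2b_n,b_n}$: the eigenvalue relation gives
\[
F(T_{2b_n}x,T_{b_n}x,x)=\bigl(e^{2i\alpha b_n}f(x)\bigr)\cdot\bigl(e^{-2i\alpha b_n}\bar f(x)^2\bigr)\cdot f(x)=|f(x)|^4=1,
\]
so $\int F\,d\mu_{2b_n,b_n}=1$ for every $n\in\n$. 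The analogous calculation yields $\int F\,d\mu_{-t,-u}=e^{-i\alpha(t-2u)}=e^{-i\alpha\xi(t,u)}$. Since $F$ is a bounded product of $L^\infty$-functions, the weak convergence of $3$-joinings in \eqref{glownazbiez} may be tested against $F$, and passing to the limit gives
\[
1=(1-\rho)\int_{\re^2}e^{-i\alpha\xi(t,u)}\,dP(t,u)+\rho\int_{X^3}F\,d\psi.
\]
Because $|F|\equiv 1$, both integrals on the right have modulus at most $1$; combined with $\rho\in[0,1)$, this convex-combination equation forces the first integral to equal $1$.

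Since $|e^{-i\alpha\xi(t,u)}|\equiv 1$, the identity $\int_{\re^2}e^{-i\alpha\xi(t,u)}\,dP(t,u)=1$ forces $e^{-i\alpha\xi(t,u)}=1$ for $P$-almost every $(t,u)$, that is $\xi(t,u)\in\tfrac{2\pi}{\alpha}\z$ for $P$-a.e.\ $(t,u)$. Equivalently, $P$ is concentrated on the affine lattice $\xi^{-1}\bigl(\tfrac{2\pi}{\alpha}\z\bigr)\subset\re^2$, namely the union of equally spaced parallel lines of slope $1/2$, which proves the first assertion by contraposition. For the particular criterion, suppose $0,d_1,d_2$ are atoms of $\xi_*P$ with $d_1,d_2$ rationally independent; if $\mathcal T$ had an eigenvalue $\alpha\ne 0$, then $d_1,d_2$ would both lie in $\tfrac{2\pi}{\alpha}\z$, giving $d_1/d_2\in\q$ and contradicting rational independence. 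Hence $\mathcal T$ is weakly mixing. The only point I expect to require some care is the justification that weak convergence in $J_3(\mathcal T)$ tests correctly against the $L^\infty$-tensor $F=f\otimes\bar f^2\otimes f$, which follows by the standard extension of convergence on indicator tensors to bounded measurable tensors via linearity and $L^2$-approximation.
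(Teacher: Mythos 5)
Your idea of testing the weak convergence of $3$-joinings directly against a tensor of eigenfunctions is a legitimate alternative to the paper's route (which projects via $\Psi\circ\Pi_{1,3}$ and $\Psi\circ\Pi_{2,3}$ to get weak operator convergence of $T_{2b_n}$ and $T_{b_n}$). The computation $\int F\,d\mu_{2b_n,b_n}=1$, $\int F\,d\mu_{-t,-u}=e^{-i\alpha\xi(t,u)}$, the convexity step forcing $\int e^{-i\alpha\xi}\,dP=1$, and the conclusion $\alpha\xi(t,u)\in 2\pi\z$ for $P$-a.e.\ $(t,u)$ are all correct. In particular, this cleanly and correctly proves the \emph{second} assertion of the proposition (the one about atoms of $\xi_*P$), arguably more directly than the paper's argument, which reaches the same conclusion via the two marginal lattice conditions and a subsequent extraction of $ad_1,ad_2\in\z$.

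However, there is a genuine gap in your proof of the \emph{first} assertion. The set $\xi^{-1}\bigl(\tfrac{2\pi}{\alpha}\z\bigr)$ is a union of equally spaced parallel lines in $\re^2$; it is \emph{not} an affine lattice (a translate of a discrete rank-$2$ subgroup). So from ``$\mathcal T$ not weakly mixing'' you have only deduced ``$P$ concentrated on a union of parallel lines,'' which is strictly weaker than ``$P$ concentrated on an affine lattice.'' The contrapositive you obtain is accordingly weaker than the stated claim: a measure $P$ can fail to be supported on any affine lattice while still being supported on such a union of lines (e.g.\ any nonatomic measure on a single line of slope $1/2$), so your argument does not rule out the existence of an eigenvalue for such $P$. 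To close the gap you should also test against $f\otimes 1\otimes\bar f$ and $1\otimes f\otimes\bar f$ (after extracting a subsequence along which $e^{2i\alpha b_n}$ and $e^{i\alpha b_n}$ converge, since these integrals now oscillate); this pins down both marginals $\sigma_1{}_*P$ and $\sigma_2{}_*P$ on $1$-dimensional affine lattices, hence $P$ on a genuine $2$-dimensional affine lattice. This is essentially what the paper's application of $\Psi\circ\Pi_{1,3}$ and $\Psi\circ\Pi_{2,3}$ accomplishes.
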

		\begin{proof}
			Assume that $P$ is not supported on an affine lattice and the flow $\mathcal T$ is not weakly mixing. Then there exists a non-zero function $f\in L^2(X,\mu)$ and $a\in\re\setminus\{0\}$ such that
			\begin{equation}\label{wartwl}
			\forall t\in\re,\
			f\circ T_t=e^{-2\pi iat}f.
			\end{equation}
			Recall that 
			$\sigma_1:\re^2\to\re$ denotes the projection on the first coordinate. By applying $\Psi\circ\Pi_{1,3}$ (see \eqref{rzutpi}) to \eqref{glownazbiez} and using \eqref{doop} and \eqref{rzutop}, we obtain
			\[
			T_{2b_n}\to (1-\rho)\int_\re T_t dP_1(t)+\rho\Psi_1,
			\]
			where $P_1:=(\sigma_{1})_*P$ and $\Psi_1$ is a Markov operator. Let $\langle\cdot,\cdot\rangle$ be 
			the scalar product on $L^2(X,\mu)$. By \eqref{wartwl}, we get
			\[
			\|f\|^2=|\langle f,f\rangle|=|\langle f,e^{-2\pi iat}f\rangle|=|\langle f,f\circ T_t \rangle|=|\langle f,f\circ T_{2b_n} \rangle|	
			\]
			for every $n\in\n$. As $n\to\infty$, we get
			\[
			\|f\|^2=|\langle f,f\circ T_{2b_n} \rangle|=\Big|\Big\langle f,(1-\rho)\int_\re f\circ T_t dP_1(t)+\rho\Psi_1(f) \Big\rangle\Big|.		
			\]
			On the other hand by the fact that Markov operator is a contraction, we get
			\begin{align*}
			\Big|\Big\langle & f,(1-\rho)\int_\re f\circ T_t dP_1(t)+\rho\Psi_1(f) \Big\rangle \Big|\\ &\le(1-\rho)\Big|\Big\langle f,\int_\re f\circ T_t dP_1(t)\Big\rangle\Big|+\rho|\langle f,\Psi_1(f) \rangle|\\ &\le
			(1-\rho)\Big|\int_\re\langle f,f\circ T_t\rangle dP_1(t)\Big|+\rho\|f\|^2\\
			&=(1-\rho)\|f\|^2\Big|\int_\re e^{-2\pi iat} dP_1(t)\Big|+\rho\|f\|^2
			\end{align*}
			Thus we get
			\[
			\Big|\int_\re e^{-2\pi iat}dP_1(t)\Big|=1
			\]
			that is
			\[
			\int_\re e^{-2\pi iat}dP_1(t)=e^{-2\pi ib}\ \text{ for some }b\in\re.
			\]
			It follows that
			\[
			\int_\re e^{-2\pi i(at-b)}dP_1(t)=1.
			\]
			This implies
			\[
			P_1(\{t\in\re;\ at-b\in\z \})=1.
			\]
			Consider now $P_2:=(\sigma_2)_*P$. Analogously, by applying $\Psi\circ\Pi_{2,3}$ to \eqref{glownazbiez}, we get 
			\[
			P_2(\{u\in\re;\ au-c\in\z \})=1\ \text{ for some }c\in\re.
			\]
			Combining the two above results, we finally obtain
			\begin{equation}\label{Pkrata}
			P\big(\{(t,u)\in\re^2;\ a(t,u)-(b,c)\in\z^2 \}\big)=1,
			\end{equation}
			which is a contradiction with our assumption. Thus if $P$ is not supported on an affine lattice then the flow $\mathcal T$ is weakly mixing.
			
			Suppose now that $\xi_*P$ has atoms at points $0,d_1$ and $d_2$. Assume again that $\mathcal T$ is not weakly mixing 
			and that $e^{2\pi i a}$, $a\neq 0$, is an eigenvalue. By the definition of $\xi$, 
			the lines $(x,\frac{1}{2}(x-d_i))$ for $i=1,2$ and $(x,\frac{1}{2}x)$ have positive measure $P$. This together with \eqref{Pkrata} yields $x_0,x_1,x_2\in\re$, such that
			\[
			\begin{split}
			&a(x_0,\tfrac{1}{2}x_0)-(b,c)\in\z^2,\\
			&a(x_1,\tfrac{1}{2}(x_1-d_1))-(b,c)\in\z^2,\\
			&a(x_2,\tfrac{1}{2}(x_2-d_2))-(b,c)\in\z^2.
			\end{split}
			\]
			This implies
			\[
			\begin{split}
			&a(x_1-x_0,\tfrac{1}{2}(x_1-x_0)-\tfrac{1}{2}d_1)\in\z^2,\\
			&a(x_2-x_0,\tfrac{1}{2}(x_2-x_0)-\tfrac{1}{2}d_2)\in\z^2.
			\end{split}
			\]		
			By applying $\xi$ to the above, we get that $ad_1\in\z$ and $ad_2\in\z$. Since $a,d_1,d_2\neq0$, we get that here
			$(ad_1)d_2-(ad_2)d_1=0$ is a non-trivial integer combination of $d_1$ and $d_2$. By the rational independence of $d_1$ and $d_2$ this yields $a=0$. This is a contradiction, hence $\mathcal T$ is weakly mixing.
			\end{proof}

\section{Acceptable permutations}\label{sec:perm}

In this section, we establish a technical result concerning a particular non-degenerate permutation, which plays a key role in proving that our main result applies to all non-hyperelliptic connected components of 
the moduli space. In particular, in view of the Remark \ref{less5}, we assume that the alphabet we consider has $d\ge 6$ elements. Recall that in every Rauzy class we can fix a non-degenerate permutation  $\pi=\{\pi_0,\pi_1\}$ satisfying
	\begin{equation}\label{pierost}
		\pi_1\circ\pi_0^{-1}(1)=d\quad \text{and}\quad\pi_1\circ\pi_0^{-1}(d)=1.
	\end{equation}
	We have the following theorem.
\begin{prop}\label{acperm}
In every Rauzy class corresponding to a non-hyperelliptic connected component of 
the moduli space $\mathcal M$, there exists a permutation $\pi=\{\pi_0,\pi_1\}$ satisfying \eqref{pierost} such that there exist distinct symbols $\al_1,\al_2,\g_1,\g_2\in\mathcal A\setminus\{\pi_0^{-1}(1),\pi_0^{-1}(d)\}$ satisfying 
the three following properties
\begin{align*}
	&\Omega_{\al_1\al_2}=\Omega_{\al_2\al_1}=0,\\
	&\Omega_{\al_1\g_2}\Omega_{\al_2\g_1}= 0 \numberthis\label{tezperm}\\
	&\Omega_{\al_1\g_1}\Omega_{\al_2\g_2}\neq0,
	\end{align*}
where $\Omega:=\Omega_\pi$ is 
the associated translation matrix.
\end{prop}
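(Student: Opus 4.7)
To prove Proposition \ref{acperm}, my plan is to translate condition \eqref{tezperm} into a combinatorial statement about the permutation $p := \pi_1 \circ \pi_0^{-1}$ of $\{1, \ldots, d\}$. By Theorem \ref{twpierost}, I fix a non-degenerate $\pi$ in the given Rauzy class $\mathcal{R}$ satisfying \eqref{pierost}, so that $p(1)=d$ and $p(d)=1$; set $L := \pi_0^{-1}(1)$ and $R := \pi_0^{-1}(d)$. Since $\Omega_{\al\beta}\neq 0$ if and only if the pair of positions $(\pi_0(\al),\pi_0(\beta))$ forms an inversion of $p$, the existence of $(\al_1,\al_2,\g_1,\g_2)$ as in \eqref{tezperm} is equivalent to the existence of four distinct inner positions $i_1,i_2,j_1,j_2\in\{2,\ldots,d-1\}$ such that $p$ preserves the order on $(i_1,i_2)$, inverts both $(i_1,j_1)$ and $(i_2,j_2)$, and preserves the order on at least one of $(i_1,j_2)$ or $(i_2,j_1)$.

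I would then introduce the \emph{crossing graph} $G$ on inner positions, joining $i$ and $j$ whenever $(i,j)$ is an inversion of $p$. A direct case analysis shows that if no such quadruple exists, then for every non-adjacent pair $\{i_1,i_2\}$ in $G$ whose vertices have degree at least $2$, the open neighborhoods $N_G(i_1)$ and $N_G(i_2)$ must coincide (a \emph{twin-neighborhood} or \emph{false-twin} property). The low-degree and boundary cases require separate bookkeeping but reduce to a small finite list of possibilities.

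Next, I would argue that the twin-neighborhood property, combined with $p(1)=d$, $p(d)=1$, $d\ge 6$ (granted by Remark \ref{less5}), and non-degeneracy, forces $p$ to be the symmetric permutation $p(i)=d+1-i$. Since by Veech's theorem the extended Rauzy class of the symmetric permutation is precisely the hyperelliptic class, this contradicts the hypothesis that $\mathcal{R}$ sits in a non-hyperelliptic component. If the starting $\pi$ itself does not admit the configuration, I would use the Rauzy--Veech induction (together with Rauzy's theorem to return to a permutation in $\mathcal{R}$ satisfying \eqref{pierost}) to navigate within the Rauzy class; since the twin-neighborhood structure is fragile under Rauzy--Veech moves, iterating must eventually break it and produce the required quadruple.

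The hardest step is the rigidity argument: deducing from the twin-neighborhood property that $p$ is symmetric, or equivalently, establishing that any permutation in a non-hyperelliptic Rauzy class yields (possibly after Rauzy--Veech moves) an inner configuration in which the twin property fails. The delicate point is to control how the fixed extremal symbols $L,R$ interact with the twin structure on inner positions, while simultaneously using non-degeneracy to rule out the auxiliary pathological configurations arising in the low-degree and boundary cases of the combinatorial case analysis.
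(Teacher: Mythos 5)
Your crossing-graph reformulation is reasonable, and the easy direction — that the absence of a quadruple forces the ``twin'' property $N_G(i_1)=N_G(i_2)$ for non-adjacent inner vertices of positive degree (degree $\ge 1$ suffices; the $\ge 2$ hedge is unnecessary) — does follow from a one-line case analysis. However, the rest of the plan has genuine gaps at exactly the points the paper's proof has to work hardest, and you have not filled them.

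First, the ``low-degree and boundary cases'' that you defer are not a small bookkeeping matter. The case where some inner vertex $\al$ has degree zero in $G$ (i.e.\ $\Omega_{\al\beta}=0$ for all inner $\beta$) is where the paper invokes non-degeneracy conditions \eqref{deg2} and \eqref{deg3}: these conditions produce an inversion $(\al_1,\g_1)$ entirely to the upper-left of $\al$ and an inversion $(\al_2,\g_2)$ entirely to the lower-right, and the resulting four symbols form the quadruple directly. Note that $\al$ itself does not appear in the quadruple — so this is a construction, not mere bookkeeping. Second, your rigidity step — ``the twin-neighborhood property forces $p$ to be symmetric'' — is not a structural fact about twin graphs; what is actually true is that if $N(i_1)=N(i_2)$ for a non-adjacent pair, then every inner position $j$ with $i_1<j<i_2$ satisfies $p(i_1)<p(j)<p(i_2)$ (the ``box'' is inversion-free with respect to $i_1,i_2$), and non-degeneracy condition \eqref{deg1} forces this box to contain an inversion $(\hat\imath_1,j_1)$. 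That inversion then makes $(\hat\imath_1,i_2)$ a non-adjacent pair of positive degree with $j_1\in N(\hat\imath_1)\setminus N(i_2)$, contradicting the twin property — or, as the paper does, one simply exhibits $\hat\imath_1,i_2,j_1,j_2$ as the required quadruple directly. Either way the argument hinges on \eqref{deg1} applied inside the box, which your proposal never identifies. Finally, the Rauzy--Veech fallback is a red herring: the paper's argument succeeds for the single fixed non-degenerate $\pi$ given by Theorem~\ref{twpierost}, and the claim that iterating Rauzy--Veech ``must eventually break'' the twin structure is asserted without any justification and would require a separate argument that you should not expect to be easy.
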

\begin{proof}
Let $\pi=\{\pi_0,\pi_1\}$ be a non-degenerate permutation satisfying \eqref{pierost}
that belongs to a Rauzy class associated with a non-hyperelliptic connected component.
Then it is not symmetric, 
hence 
its translation matrix $\Omega$ contains zero entries outside the diagonal.
Indeed, assume contrary to our claim that
\[
\pi_0(\al)<\pi_0(\beta)\Leftrightarrow\pi_1(\al)>\pi_1(\beta)\text{ for all }\al,\beta\in\mathcal A.
\]
Then for every $\al\in\mathcal A$
\[
\pi_1(\al)=\#\{\beta\in\mathcal A; \pi_1(\beta)<\pi_1(\al)\}+1=\#\{\beta\in\mathcal A; \pi_0(\beta)>\pi_0(\al)\}+1=d-\pi_0(\al)+1.
\]
Hence $\pi$ is a symmetric permutation.

 We need to consider two cases separately.

 \noindent\textbf{Case 1.} Assume first that there exists a symbol $\al\in\mathcal{A}$ such that for all symbols $\beta\in\mathcal{A}$ with $1<\pi_0(\beta)<d$ we have
\[
\pi_0(\de)< \pi_0(\al)\Leftrightarrow \pi_1(\de)< \pi_1(\al)
\]
that is
\begin{equation}\label{0wiersz}
\Omega_{\al\beta}=0\quad\text{for all}\quad\beta\in\mathcal{A}\setminus\{\pi_0^{-1}(1),\pi_0^{-1}(d)\}.
\end{equation}
Since $\pi$ is non-degenerate, there exist symbols $\al_1,\g_1$ such that
\[
1<\pi_0(\al_1)<\pi_0(\g_1)<\pi_0(\al)\quad\text{and}\quad 1<\pi_1(\g_1)<\pi_1(\al_1)<\pi_1(\al).
\]
Otherwise, $\pi$ satisfies \eqref{deg2} and hence, it is degenerate. Similarly, there exist symbols $\al_2,\g_2$ such that
\[
d>\pi_0(\al_2)>\pi_0(\g_2)>\pi_0(\al)\quad\text{and}\quad d>\pi_1(\g_2)>\pi_1(\al_2)>\pi_1(\al).
\]
Otherwise, $\pi$ satisfies \eqref{deg3} and it is again degenerate. Thus we have
\[
\Omega_{\al_1\al_2}=\Omega_{\al_2\al_1}=\Omega_{\al_1\g_2}=\Omega_{\al_2\g_1}=0\ \text{ and }\ \Omega_{\al_1\g_1}=1\ \text{ and }\ \Omega_{\al_2\g_2}=-1,
\]
which is the desired property. Hence $\al_1,\al_2,\g_1,\g_2$ are the desired symbols.

\noindent\textbf{Case 2. }Assume now, that there are no symbols satisfying $\eqref{0wiersz}$. Since there are zeroes outside the diagonal in $\Omega_\pi$, there exist two distinct symbols $\al_1,\al_2\in\mathcal{A}$ such that $\Omega_{\al_1\al_2}=\Omega_{\al_2\al_1}=0$.

\noindent\textbf{Case 2a. }Suppose first that the rows of $\Omega_\pi$ corresponding to $\al_1$ and $\al_2$ are not identical. Then there exists 
a symbol $\g$ such that $\Omega_{\al_1\g}\neq 0$ and $\Omega_{\al_2\g}=0$ or $\Omega_{\al_2\g}\neq 0$ and $\Omega_{\al_1\g}=0$. Assume that the first case holds (the second is done analogously) and set $\g_1:=\g$.
Note that $\g_1\in\mathcal A\setminus\{\pi_0^{-1}(1),\pi_0^{-1}(d)\}$.
Since $\al_2$ does not satisfy \eqref{0wiersz}, there exist two $\g_2\in\mathcal A\setminus\{\pi_0^{-1}(1),\pi_0^{-1}(d)\}$, $\g_2\neq\g_1$, such that $\Omega_{\al_2\g_2}\neq 0$. Thus we obtain \eqref{tezperm}.

 \noindent\textbf{Case 2b. }Suppose that the rows of 
the matrix $\Omega$ corresponding to indices $\al_1,\al_2$ are identical. Then there are no indices $\g_1,\g_2$ such that $\al_1,\al_2,\g_1,\g_2$ satisfy \eqref{tezperm}. We show that there is a different set of symbols satisfying \eqref{tezperm}.  Note that all symbols $\beta$ such that
\begin{equation}\label{miedzy1}
\pi_0(\al_1)<\pi_0(\beta)<\pi_0(\al_2)
\end{equation}
satisfy
\begin{equation}\label{miedzy2}
\pi_1(\al_1)<\pi_1(\beta)<\pi_1(\al_2).
\end{equation}
Otherwise only one of the entries $\Omega_{\al_1\beta}$ and $\Omega_{\al_2\beta}$ would be non-zero. In other words all symbols $\beta\in\mathcal A$ satisfying \eqref{miedzy1} satisfy $\Omega_{\al_1\beta}=\Omega_{\al_2\beta}=0$. Observe that there exist two different symbols $\hat\al_1,\g_1\in\mathcal{A}$ satisfying  \eqref{miedzy1} such that $\Omega_{\hat\al_1\g_1}\neq 0$. Otherwise the permutation $\pi$ satisfies 
\eqref{deg1} and it is degenerate. Since $\al_1,\al_2$ do not satisfy \eqref{0wiersz} and the corresponding rows are identical, there exists 
a symbol $\g_2\in\mathcal{A}$ such that $\Omega_{\al_1\g_2}=\Omega_{\al_2\g_2}\neq 0$. It follows that $\Omega_{\hat\al_1\g_2}=\Omega_{\al_2\g_2}\neq 0$. We have
\[
\Omega_{\hat\al_1\al_2}=\Omega_{\al_2\hat\al_1}=\Omega_{\al_2\g_1}=0\ \text{ and }\ \Omega_{\hat\al_1\g_1}\neq 0\ \text{ and }\ \Omega_{\al_2\g_2}\neq 0.
\]
Hence, $\hat\al_1,\al_2,\g_1,\g_2$ are the desired symbols.
\end{proof}
\begin{wn}\label{nozero}
If $\pi$ is a nonsymmetric and nondegenerate permutation 
satisfying \eqref{pierost}, and $\tau\in\re^\mathcal{A}$ is a rationally independent vector, then there exist $\al_1,\al_2\in\mathcal{A}$
such that $\Omega_{\al_1\al_2}=\Omega_{\al_2\al_1}=0$ and 
for each $i=1,2$ the numbers
\[
(\Omega\tau)_{\al_2}-(\Omega\tau)_{\al_1},\ \text{and}\  (\Omega\tau)_{\al_i}-((\Omega\tau)_{\pi_0^{-1}(1)}+(\Omega\tau)_{\pi_0^{-1}(d)})
\]
are 
rationally independent. 
\end{wn}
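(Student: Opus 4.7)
My plan is to apply Proposition~\ref{acperm} (whose proof only uses nonsymmetry, nondegeneracy and \eqref{pierost}, not the geometric nonhyperellipticity assumption) in order to obtain distinct symbols $\al_1,\al_2,\g_1,\g_2\in\mathcal{A}\setminus\{\pi_0^{-1}(1),\pi_0^{-1}(d)\}$ satisfying \eqref{tezperm}. I will then show that the two numbers
\[
A:=(\Omega\tau)_{\al_2}-(\Omega\tau)_{\al_1},\qquad B_i:=(\Omega\tau)_{\al_i}-\bigl((\Omega\tau)_{\pi_0^{-1}(1)}+(\Omega\tau)_{\pi_0^{-1}(d)}\bigr)
\]
are rationally independent, for each $i=1,2$. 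Writing $A=\sum_\beta v_\beta\tau_\beta$ and $B_i=\sum_\beta w^{(i)}_\beta\tau_\beta$ with integer coefficients, the rational independence of $\tau$ instantly reduces the problem to showing that the vectors $v,w^{(i)}\in\mathbb{Z}^{\mathcal{A}}$ are $\mathbb{Q}$-linearly independent: indeed, if $\lambda A+\mu B_i\in\mathbb{Q}$ with $\lambda,\mu\in\mathbb{Z}$, the rational independence of $\tau$ forces $\lambda v+\mu w^{(i)}=0$ coordinate-wise.

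The next step is to compute the coefficients $v_\beta$ and $w^{(i)}_\beta$ and observe a substantial amount of cancellation. Since \eqref{pierost} fixes the symbols $\pi_0^{-1}(1)$ and $\pi_0^{-1}(d)$ in the first and last top positions but in the reverse order at the bottom, one gets $\Omega_{\pi_0^{-1}(1)\beta}=+1$ and $\Omega_{\pi_0^{-1}(d)\beta}=-1$ for every $\beta$ different from these two symbols, while $\Omega_{\al_i,\pi_0^{-1}(1)}=-1$ and $\Omega_{\al_i,\pi_0^{-1}(d)}=+1$. Plugging these values in, $v$ and $w^{(i)}$ both vanish at $\beta\in\{\pi_0^{-1}(1),\pi_0^{-1}(d)\}$, and the first condition of \eqref{tezperm} makes them vanish also at $\beta\in\{\al_1,\al_2\}$. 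So all the action is confined to the coordinates $\g_1,\g_2$.

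To conclude, I will verify $\mathbb{Q}$-linear independence by computing the $2\times 2$ determinant of $v$ and $w^{(i)}$ restricted to $\{\g_1,\g_2\}$. A direct expansion gives
\[
\det(v|w^{(1)})\big|_{\g_1,\g_2}=\Omega_{\al_2\g_1}\Omega_{\al_1\g_2}-\Omega_{\al_1\g_1}\Omega_{\al_2\g_2},
\]
and symmetrically
\[
\det(v|w^{(2)})\big|_{\g_1,\g_2}=\Omega_{\al_1\g_2}\Omega_{\al_2\g_1}-\Omega_{\al_1\g_1}\Omega_{\al_2\g_2}.
\]
The second condition $\Omega_{\al_1\g_2}\Omega_{\al_2\g_1}=0$ in \eqref{tezperm} kills the first term in each determinant, while the third condition $\Omega_{\al_1\g_1}\Omega_{\al_2\g_2}\neq 0$ makes the remaining term nonzero. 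Hence both determinants are nonzero and the desired rational independence follows.

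The main technical point is recognising that the two distinguished coordinates $\g_1,\g_2$ provided by Proposition~\ref{acperm} are exactly the coordinates where the $2\times 2$ minor simplifies, under \eqref{tezperm}, into the single nonvanishing product $\Omega_{\al_1\g_1}\Omega_{\al_2\g_2}$. Everything else is a routine verification once the cancellations coming from \eqref{pierost} and $\Omega_{\al_1\al_2}=\Omega_{\al_2\al_1}=0$ are written out.
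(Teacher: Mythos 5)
Your proof is correct and follows essentially the same route as the paper's: invoke Proposition~\ref{acperm} (observing its proof requires only nondegeneracy, nonsymmetry and \eqref{pierost}), reduce rational independence of the two numbers to $\mathbb{Q}$-linear independence of the coefficient vectors via rational independence of $\tau$, and settle the latter by a $2\times2$ minor computation in coordinates $\g_1,\g_2$ using \eqref{tezperm}; the paper phrases this as invertibility of a triangular matrix with nonzero diagonal, which is the same determinant. One small exposition slip: $v$ and $w^{(i)}$ do \emph{not} in general vanish at the remaining coordinates $\beta\notin\{\pi_0^{-1}(1),\pi_0^{-1}(d),\al_1,\al_2,\g_1,\g_2\}$ (there are such $\beta$ since $d\ge 6$), so ``all the action is confined to $\g_1,\g_2$'' is an overstatement; nevertheless the argument is unaffected, because a single nonvanishing $2\times2$ minor already guarantees linear independence of the two vectors.
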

\begin{proof}
We prove the case when $i=1$. If $i=2$, the proof goes along the same lines. Consider symbols $\al_1,\al_2,\g_1,\g_2$ given by 
Proposition~\ref{acperm}. We have $\Omega_{\al_1\al_2}=\Omega_{\al_2\al_1}=0$.
Assume that there exist 
integers $p$ and $q$ such that
	\[
	\begin{split}
	0&=p\big((\Omega\tau)_{\al_2}-(\Omega\tau)_{\al_1}\big)+q\big( (\Omega\tau)_{\al_1}-((\Omega\tau)_{\pi_0^{-1}(1)}+(\Omega\tau)_{\pi_0^{-1}(d)})\big)\\
	&=\sum_{\beta\in\mathcal A}(-q\Omega_{\pi_0^{-1}(1)\beta}-q\Omega_{\pi_0^{-1}(d)\beta}+(q-p)\Omega_{\al_1\beta}+p\Omega_{\al_2\beta})\tau_\beta.
	\end{split}
	\]
	By rational independence of $\tau$, this yields
	\[
	-q\Omega_{\pi_0^{-1}(1)\beta}-q\Omega_{\pi_0^{-1}(d)\beta}+(q-p)\Omega_{\al_1\beta}+q\Omega_{\al_2\beta}=0,
	\]
	for every $\beta\in\mathcal A$. Since
	\[
	\Omega_{\pi_0^{-1}(1)\beta}=1\text{ for }\beta\in\mathcal A\setminus\{\pi_0^{-1}(1)\}\text{ and }\Omega_{\pi_0^{-1}(d)\beta}=-1\text{ for }\beta\in\mathcal A\setminus\{\pi_0^{-1}(d)\},
	\]
	we have
	\[
	(q-p)\Omega_{\al_1\beta}+q\Omega_{\al_2\beta}=0\ \text{for}\ \beta\in\mathcal A\setminus\{\pi_0^{-1}(1),\pi_0^{-1}(d) \}
	\]
	Since 
	by~\eqref{tezperm} the matrix
	\[
	\begin{bmatrix}
	\Omega_{\al_1\g_1} & \Omega_{\al_1\g_2}\\
	\Omega_{\al_2\g_1} & \Omega_{\al_2\g_2}
	\end{bmatrix}
	\]
	is triangular and has non-zero entries on the main diagonal, it follows that
	\[
	p=q=0,
	\]
	which proves the rational independence.
\end{proof}

\section{The measures on the surface}\label{sec:meas}
In this section we will deal with measures on a given surface $(M,\Sigma)$ 
which are absolutely continuous 
with respect to the Lebesgue measure. We want to prove that, if the density of such measure is bounded and close enough to 
the constant function $1$ in $L^1$, then there is 
an explicit way to construct a homeomorphism which pushes this measure to 
the Lebesgue measure. The computation given below is partially inspired by the paper of Moser \cite{Mos}. We will need the following auxiliary lemma.
\begin{lm}\label{zlep}
Let $x,y\in\re^2$ be two points on 
the plane and let $\overline{xy}$ be 
the segment with endpoints at $x$ and $y$. Let $H_1,H_2$ be two affine transformations on $\re^2$. If $H_1(x)=H_2(x)$ and $H_1(y)=H_2(y)$, then $H_1|_{\overline{xy}}=H_2|_{\overline{xy}}$. Moreover, for each noncollinear triples $x_1,x_2,x_3\in\re^2$ and $y_1,y_2,y_3\in\re^2$ there exists a unique invertible affine transformation $H$ such that $H(x_i)=y_i$ for $i=1,2,3$.
\end{lm}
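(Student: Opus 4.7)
The plan is to derive both assertions directly from the defining affine-combination property $H(\lambda u+(1-\lambda)v)=\lambda H(u)+(1-\lambda)H(v)$ satisfied by every affine map of $\re^2$.

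For the first assertion I would parametrize $\overline{xy}$ as $z(t)=(1-t)x+ty$, $t\in[0,1]$, and apply each of the two maps: the affine-combination property gives $H_i(z(t))=(1-t)H_i(x)+tH_i(y)$ for $i=1,2$. Since $H_1$ and $H_2$ coincide at $x$ and at $y$, the right-hand sides agree for every $t\in[0,1]$, so $H_1$ and $H_2$ agree on the whole segment.

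For the second assertion I would write every affine transformation of $\re^2$ in the form $H(v)=Av+b$ with $A\in M_{2\times 2}(\re)$ and $b\in\re^2$. Subtracting $H(x_1)=y_1$ from the other two prescribed equalities reduces the three conditions $H(x_i)=y_i$ to the equivalent system
\[
A(x_2-x_1)=y_2-y_1,\qquad A(x_3-x_1)=y_3-y_1,\qquad b=y_1-Ax_1.
\]
Noncollinearity of $x_1,x_2,x_3$ means that $x_2-x_1$ and $x_3-x_1$ form a basis of $\re^2$, so $A$ is uniquely determined by its prescribed action on this basis, after which $b$ is forced by the third equation; this gives existence and uniqueness of $H$ simultaneously. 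Invertibility of $A$, and hence of $H$, is then equivalent to linear independence of $y_2-y_1$ and $y_3-y_1$, i.e.\ to noncollinearity of the target triple, which is the natural hypothesis under which one would actually apply the invertibility clause.

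There is no real obstacle in the proof itself, as the lemma is a purely linear-algebraic statement. Its point in Section \ref{sec:meas} appears to be the construction of a homeomorphism of $M$ that pushes an absolutely continuous measure to Lebesgue measure by prescribing values at the vertices of a sufficiently fine triangulation and extending affinely on each triangle. From that perspective the first assertion is the crucial one: it guarantees that two affine pieces defined on adjacent triangles automatically match along their shared edge as soon as they agree at its two endpoints, so the piecewise definition patches into a globally continuous map; the second assertion then provides and pins down each local affine piece.
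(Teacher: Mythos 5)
The paper states Lemma~\ref{zlep} without proof, treating it as a standard fact of affine geometry, so there is no argument in the text to compare against; your proof is correct and is the standard one. Both parts are handled properly: the first follows directly from the affine-combination identity $H_i((1-t)x+ty)=(1-t)H_i(x)+tH_i(y)$, and the second from the observation that prescribing $A$ on the basis $\{x_2-x_1,\,x_3-x_1\}$ determines $A$ uniquely, after which $b=y_1-Ax_1$ is forced; you also correctly note that invertibility of the resulting $H$ uses noncollinearity of the target triple as well, which is how the hypothesis should be read.
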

%
We will now prove lemmas which give a construction of a homeomorphism of an isosceles right triangle which pushes forward any given absolutely continuous measure whose density satisfies some conditions to the Lebesgue measure.
For any affine transformation $G\colon \re^2\to\re^2$ define $\text{lin}(G)=DG$ as the matrix determining its linear part, $D$ denotes the derivative. Moreover for any real $2\times 2$ matrix
$M=\bigl(\begin{smallmatrix}
a&b \\ c&d
\end{smallmatrix} \bigr)$
we have the following formula for the operator norm
\begin{equation}\label{norma}
\|M\|:=\sqrt{\frac{a^2+b^2+c^2+d^2+\sqrt{(a^2+b^2+c^2+d^2)^2-4(\det(M))^2}}{2}}.
\end{equation}
In particular, $\|\operatorname{lin}(G)\|$ is a Lipschitz constant of $G$.

On the space of homeomorphisms $\text{Hom}(X)$ 
of a compact metric space $X$, we will consider 
the standard metric $d_{\text{Hom}}(H,G):=\max\{\sup_{x\in X}d(H(x),G(x)),\sup_{x\in X}d(H^{-1}(x),G^{-1}(x))\}$, where $d$ is 
the metric on $X$.

Throughout this section we will heavily depend on the following construction. 
Let  $0<a<1$, and let $V$ be the triangle in $\re^2$ with vertices in points $(0,a),(a,0),(0,-a)$.
Let $V_1$ and $V_2$ be the triangles whose vertices are $(0,a),(a,0),(0,0)$ and $(0,0),(a,0),(0,-a)$ respectively. Let $0\le h<1$ and $0<\epsilon<1$. Let $y(h):=(\epsilon a(1-h),ha)$. 	Consider the triangles given by the following  sets of vertices:
\begin{itemize}
	\item $C_1=C_1(h,\epsilon)$ given by $\{(0,0),(0,a),y(h,\epsilon)\}$;
	\item $C_2=C_2(h,\epsilon)$ given by $\{(a,0),(0,a),y(h,\epsilon)\}$;
	\item $C_3=C_3(h,\epsilon)$ given by $\{(0,0),(\epsilon a,0),y(h,\epsilon)\}$;
	\item $C_4=C_4(h,\epsilon)$ given by $\{(a,0),(\epsilon a,0),y(h,\epsilon)\}$;
	\item $C_5=C_5(h,\epsilon)$ given by $\{(0,0),(0,-a),(\epsilon a,0)\}$;
	\item $C_6=C_6(h,\epsilon)$ given by $\{(a,0),(0,-a),(\epsilon a,0)\}$.
\end{itemize}
	
	Let $\hat h:=\frac{h}{h+1}\ge 0$. Consider the point $\hat y(h,\epsilon)=(\epsilon a-\hat h\epsilon a,-\hat ha)$. Consider also the triangles
	\begin{itemize}
		\item $\hat C_1=\hat C_1(h,\epsilon)$ given by $\{(0,0),(0,a),(\epsilon a,0)\}$;
		\item $\hat C_2=\hat C_2(h,\epsilon)$ given by $\{(a,0),(0,a),(\epsilon a,0)\}$;
		\item $\hat C_3=\hat C_3(h,\epsilon)$ given by $\{(0,0),(\epsilon a,0),\hat y(h,\epsilon)\}$;
		\item $\hat C_4=\hat C_4(h,\epsilon)$ given by $\{(a,0),(\epsilon a,0),\hat y(h,\epsilon)\}$;
		\item $\hat C_5=\hat C_5(h,\epsilon)$ given by $\{(0,0),(0,-a),\hat y(h,\epsilon)\}$;
		\item $\hat C_6=\hat C_6(h,\epsilon)$ given by $\{(a,0),(0,-a),\hat y(h,\epsilon)\}$.
	\end{itemize}
		By the definition of $h$ and $\hat h$ we have 
		\begin{equation}\label{proporcje5}
		\begin{split}
		&\frac{\operatorname{Leb}(C_1)}{\operatorname{Leb}(\hat C_1)}=\frac{\operatorname{Leb}(C_2)}{\operatorname{Leb}(\hat C_2)}=1-h\quad\text{and}\\	
		\frac{\operatorname{Leb}(C_3)}{\operatorname{Leb}(\hat C_3)}&=\frac{\operatorname{Leb}(C_4)}{\operatorname{Leb}(\hat C_4)}=\frac{\operatorname{Leb}(C_5)}{\operatorname{Leb}(\hat C_5)}=\frac{\operatorname{Leb}(C_6)}{\operatorname{Leb}(\hat C_6)}=1+h.
		\end{split}
		\end{equation}
	
	Define $H(h,\epsilon):V\to V$ as a  piecewise affine homeomorphism such that
\begin{align*}
		(\text{i})\quad &H(h,\epsilon)(C_i)=\hat C_i, H(h,\epsilon)|_{C_i}\text{ is affine for  }i=1,\ldots,6;\\
		(\text{ii})\quad &H(h,\epsilon)\text{ fixes }(0,0),(0,a),(0,-a),(a,0),\numberthis\label{Hdef} \\
		(\text{iii})\quad &H(h,\epsilon)(y)=(a\epsilon,0)\text{ and } H(h,\epsilon)(a\epsilon,0)=\hat y.
	\end{align*}
	
	\begin{figure}[h]
 \begin{center}
		\includegraphics[scale=0.45]{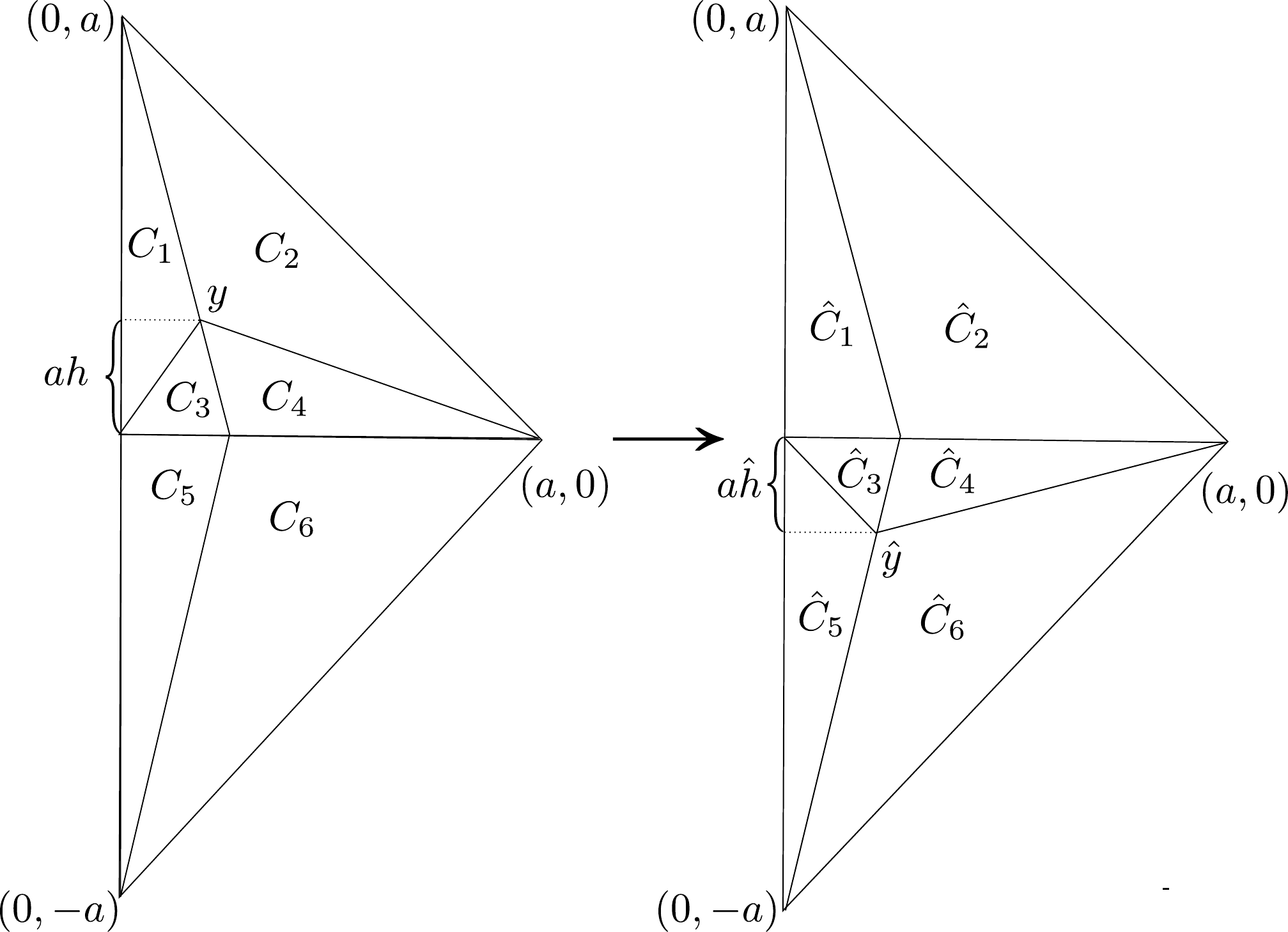}
		\caption{The division of $V$ into triangles and the map $H(h,\epsilon)$.}
 \end{center}
	\end{figure}
	

	Note that by Lemma \ref{zlep}, $H(h,\epsilon)$ is well defined everywhere on $V$ and also $H(h,\epsilon)|_{\partial V}=Id|_{\partial V}$.
	 Moreover
	 \[
	 \text{lin}(H(h,\epsilon)|_{C_1}):=
	 \begin{bmatrix}
	 1+\frac{h}{1-h} & 0\\
	 \frac{-h}{\epsilon (1-h)} & 1
	 \end{bmatrix};
	 \quad\quad
	 \text{lin}(H(h,\epsilon)|_{C_2}):=
	 \begin{bmatrix}
	 1-\frac{\epsilon h}{(1-h)(1-\epsilon )} & \frac{-\epsilon h}{(1-h)(1-\epsilon )}\\
	 \frac{h}{(1-h)(1-\epsilon )} & 1+\frac{h}{(1-h)(1-\epsilon )}
	 \end{bmatrix};
	 \]
	 \[
	 \text{lin}(H(h,\epsilon)|_{C_3}):=
	 \begin{bmatrix}
	 1-\frac{h}{1+h} & \frac{2\epsilon }{1+h}\\
	 \frac{-h}{\epsilon (1+h)} & 1-\frac{2h}{1+h}
	 \end{bmatrix};
	 \quad
	 \text{lin}(H(h,\epsilon)|_{C_4}):=
	 \begin{bmatrix}
	 1+\frac{\epsilon h}{(1+h)(1-\epsilon )} & \frac{\epsilon (2+h-2\epsilon)}{(1+h)(1-\epsilon )}\\
	 \frac{h}{(1+h)(1-\epsilon )} & 1-\frac{h(1-2\epsilon )}{(1+h)(1-\epsilon )}
	 \end{bmatrix};
	 \]
	 \[
	 \text{lin}(H(h,\epsilon)|_{C_5}):=
	 \begin{bmatrix}
	 1-\frac{h}{1+h} & 0\\
	 \frac{-h}{\epsilon (1+h)} & 1
	 \end{bmatrix};
	 \quad\quad
	 \text{lin}(H(h,\epsilon)|_{C_6}):=
	 \begin{bmatrix}
	 1+\frac{\epsilon h}{(1+h)(1-\epsilon )} & \frac{-h\epsilon }{(1+h)(1-\epsilon )}\\
	 \frac{h}{(1+h)(1-\epsilon )} & 1-\frac{h}{(1+h)(1-\epsilon )}
	 \end{bmatrix}.
	 \]
	 	By \eqref{proporcje5} we have 
	 	\[
	 	\det\big(\text{lin}(H(h,\epsilon)|_{C_1})\big)=\det\big(\text{lin}(H(h,\epsilon)|_{C_2})\big)=\frac{1}{1-h}\ge 1,
	 	\]
	 	and
	 	\[
	 	\begin{split}
	 	&\det\big(\text{lin}(H(h,\epsilon)|_{C_3})\big)=\det\big(\text{lin}(H(h,\epsilon)|_{C_4})\big)
	 	\\&=\det\big(\text{lin}(H(h,\epsilon)|_{C_5})\big)=\det\big(\text{lin}(H(h,\epsilon)|_{C_6})\big)=\frac{1}{1+h}\le 1.
	 	\end{split}
	 	\]
	 It is also worth noting that $(0,0)$ is fixed by the affine maps $H(h,\epsilon)|_{C_1}$, $H(h,\epsilon)|_{C_3}$ and $H(h,\epsilon)|_{C_5}$, while $(a,0)$ is a common fixed point for the transformations $H(h,\epsilon)|_{C_2}$, $H(h,\epsilon)|_{C_4}$ and $H(h,\epsilon)|_{C_6}$.
	

	
We can also define $H(h,\epsilon):V\to V$ for $-1<h\le 0$. Let $J:\re^2\to \re^2$ be the reflection across 
the $x$-axis. Note that $JV=V$, $JV_1=V_2$ and $JV_2=V_1$. Now define $\hat{h}:=\frac{h}{1+|h|}$,  $C_i(h,\epsilon):=J(C_i(-h,\epsilon))$, $\hat C_i(h,\epsilon):=J(\hat C_i(-h,\epsilon))$ and
\begin{equation}\label{inwol}
H(h,\epsilon):=J\circ H(-h,\epsilon)\circ J.
\end{equation}
For $i=1,\ldots,6$ we have
\[
\text{lin}(H(h,\epsilon)|_{C_i}=J\circ\text{lin}(H(-h,\epsilon)|_{C_i}\circ J.
\]
Since $J$ is an isometry, we also obtain
\[
	 	\det\big(\text{lin}(H(h,\epsilon)|_{C_1})\big)=\det\big(\text{lin}(H(h,\epsilon)|_{C_2})\big)=\frac{1}{1+h}\ge 1,
\]
and
\[
\begin{split}
	 	&\det\big(\text{lin}(H(h,\epsilon)|_{C_3})\big)=\det\big(\text{lin}(H(h,\epsilon)|_{C_4})\big)
	 	\\&=\det\big(\text{lin}(H(h,\epsilon)|_{C_5})\big)=\det\big(\text{lin}(H(h,\epsilon)|_{C_6})\big)=\frac{1}{1-h}\le 1.
\end{split}
\]
Hence in general for $-1<h<1$ we have
	 	\begin{equation}\label{proporcje2}
	 	\det\big(\text{lin}(H(h,\epsilon)|_{C_1})\big)=\det\big(\text{lin}(H(h,\epsilon)|_{C_2})\big)=\frac{1}{1-|h|}\ge 1,
	 	\end{equation}
	 	and
	 	\begin{equation}\label{proporcje3}
	 	\begin{split}
	 	&\det\big(\text{lin}(H(h,\epsilon)|_{C_3})\big)=\det\big(\text{lin}(H(h,\epsilon)|_{C_4})\big)
	 	\\&=\det\big(\text{lin}(H(h,\epsilon)|_{C_5})\big)=\det\big(\text{lin}(H(h,\epsilon)|_{C_6})\big)=\frac{1}{1+|h|}\le 1.
	 	\end{split}
	 	\end{equation}

\begin{lm}\label{Hodh}
	For 
	any fixed $\epsilon>0$ and for 
	every $h_1,h_2\in(-\frac{1}{2},\frac{1}{2})$ we have
	\begin{equation}\label{conteq}
	d_{\operatorname{Hom}}(H(h_1,\epsilon),H(h_2,\epsilon))\leq \frac{20a}{\epsilon}|h_2-h_1|.
	\end{equation}
	\end{lm}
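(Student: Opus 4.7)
The plan is to prove the pointwise estimate
\[
\sup_{x\in V}|H(h_1,\epsilon)(x) - H(h_2,\epsilon)(x)| \le \frac{10a}{\epsilon}|h_1-h_2|,
\]
together with the analogous bound for the inverses, from which \eqref{conteq} follows by taking the maximum of the two. By the symmetry \eqref{inwol} and the fact that the reflection $J$ is an isometry, it is enough to treat $h_1, h_2 \in [0,1/2)$; the mixed-sign case follows by splitting at $h=0$ and using the triangle inequality.

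I would first verify that among all the vertices of the source partition $\{C_i(h,\epsilon)\}_{i=1}^6$ and the target partition $\{\hat C_i(h,\epsilon)\}_{i=1}^6$, only the two points $y(h)$ and $\hat y(h)$ depend on $h$, the remaining ones $(0,\pm a)$, $(a,0)$, $(\epsilon a,0)$ being fixed. From $y(h) = (\epsilon a(1-h), ha)$ one obtains $|y(h_1)-y(h_2)| \le a\sqrt{1+\epsilon^2}\,|h_1-h_2| \le a\sqrt{2}\,|h_1-h_2|$, and since $\hat h = h/(1+h)$ satisfies $|d\hat h/dh| = 1/(1+h)^2 \le 1$ on $[0,1/2)$, the same Lipschitz bound holds for $\hat y(h)$.

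The central step is then the following. Fix $x \in V$ and an index $i$ such that $x \in C_i(h_1,\epsilon)\cap C_i(h_2,\epsilon)$, and write $x = \sum_{k=1}^3 \lambda_k^{(j)} v_k^{(j)}$ in barycentric coordinates with respect to the vertices $v_k^{(j)}$ of $C_i(h_j,\epsilon)$, so that $H(h_j,\epsilon)(x) = \sum_k \lambda_k^{(j)} \hat v_k^{(j)}$. Decomposing
\[
H(h_1,\epsilon)(x) - H(h_2,\epsilon)(x) = \sum_{k}(\lambda_k^{(1)}-\lambda_k^{(2)})\,\hat v_k^{(1)} + \sum_{k}\lambda_k^{(2)}(\hat v_k^{(1)}-\hat v_k^{(2)}),
\]
the second sum is controlled by the vertex estimates of the previous paragraph, while the first is controlled via the fact that the linear map assigning barycentric coordinates to a point of $C_i$ has operator norm comparable to $1/\operatorname{area}(C_i)$, which is of order $1/\epsilon$ for the thin triangles adjacent to the $y$-axis or to $(\epsilon a,0)$. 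When $x$ lies in different pieces for $h_1$ and $h_2$, a finite sequence of intermediate parameters $h_1 < h_1^{\ast} < \dots < h_n^{\ast} < h_2$ can be chosen so that $x$ remains in a single piece on each subinterval, and the triangle inequality together with the continuity of $h \mapsto H(h,\epsilon)(x)$ across cell boundaries reduces the general case to the previous one.

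The main obstacle is the careful bookkeeping of the $1/\epsilon$ factor through the six pieces and verifying that the worst-case constant is no larger than $20$. The $1/\epsilon$ blow-up is already visible in the off-diagonal entry $-\tfrac{h}{\epsilon(1\pm h)}$ of the matrices $\operatorname{lin}(H(h,\epsilon)|_{C_i})$ displayed above, and it reflects the strong shearing of the thin triangles $C_1, C_3, C_5$ (and their mirror images) by these affine maps. Finally, the estimate for $H^{-1}(h_i,\epsilon)$ follows from the identical argument, since by construction $H^{-1}(h,\epsilon)$ is piecewise affine with the roles of the source and target triangulations $\{C_i\}$ and $\{\hat C_i\}$ interchanged, and the moving vertex $\hat y(h)$ obeys the same Lipschitz bound as $y(h)$.
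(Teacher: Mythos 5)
Your overall architecture matches the paper's: a pointwise sup estimate, the reflection $J$ via~\eqref{inwol} to restrict to $h\ge 0$, splitting at $h=0$ for mixed signs, and a symmetric argument for the inverses, with $d_{\operatorname{Hom}}$ then bounded as a maximum of two sups. Your treatment of the case where $x$ changes cells, telescoping through intermediate parameters $h_j^*$ at which $x$ sits on a cell boundary, is a clean alternative to the paper's argument, which instead isolates the slivers $W_1,W_2$ and compares the images of a segment $I_x$ through $x$ via preservation of collinear ratios; that part of your plan is sound.

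The gap lies in the same-cell estimate. The paper uses the shared fixed point $(0,0)$ or $(a,0)$ of $H(h_1,\epsilon)|_{C_i}$ and $H(h_2,\epsilon)|_{C_i}$ to write the difference as $\bigl[\operatorname{lin}(H(h_1,\epsilon)|_{C_i})-\operatorname{lin}(H(h_2,\epsilon)|_{C_i})\bigr](x-x_0)$ and reads the bound $\tfrac{10}{\epsilon}|h_1-h_2|$ directly off the displayed matrices, uniformly in $h\in(-\tfrac12,\tfrac12)$. Your barycentric route instead needs a bound on the increments $|\lambda_k^{(1)}-\lambda_k^{(2)}|$, which you attribute to an operator norm ``comparable to $1/\operatorname{area}(C_i)$, of order $1/\epsilon$.'' This is false for $C_3$ and $C_4$: from the vertex data, $\operatorname{area}\bigl(C_3(h,\epsilon)\bigr)=\tfrac12\epsilon a^2|h|$ and $\operatorname{area}\bigl(C_4(h,\epsilon)\bigr)=\tfrac12(1-\epsilon)a^2|h|$, which vanish as $h\to 0$, a regime the hypothesis $h\in(-\tfrac12,\tfrac12)$ allows. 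Concretely, the barycentric coordinate of $y(h)$ on $C_3$ equals $x_2/(ah)$, whose increment is $\frac{|x_2|\,|h_1-h_2|}{a h_1 h_2}$, of size $|h_1-h_2|/\max(h_1,h_2)$ on the intersection; this is \emph{not} $O(|h_1-h_2|/\epsilon)$ once $h_1,h_2\ll\epsilon$, and the individual terms of your first sum then exceed the target by an arbitrary factor. The decomposition is in fact salvageable, but only by exploiting a cancellation you do not mention: $y(h)=(1-h)(\epsilon a,0)+h(0,a)$ slides along a fixed line through the vertex $(\epsilon a,0)$ of $C_3$, so the barycentric coordinate of $(0,0)$ is $h$-independent; the two remaining increments are therefore opposite, and the first sum collapses to that large increment times the short vector $\frac{h_1}{1+h_1}(\epsilon a,a)$, which is uniformly $O(a|h_1-h_2|)$. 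Without this structural observation (or some other device taming the degenerating cells) the plan does not deliver a uniform Lipschitz constant, let alone the announced $10a/\epsilon$.
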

	\begin{proof}
	We first prove that
\begin{equation}\label{conteq2}
\|H(h_1,\epsilon)(x)-H(h_2,\epsilon)(x)\|\leq \frac{20a}{\epsilon}|h_2-h_1|,
\end{equation}
for every $x\in V$.
Indeed, assume that $h_1$ and $h_2$ are non-negative numbers and $h_1\geq h_2$.
Consider the triangle $W_1$ with vertices $(0,0),y(h_1,\epsilon),y(h_2,\epsilon)$ and 
the triangle $W_2$ given by 
the points 
$(a,0),y(h_1,\epsilon), y(h_2,\epsilon)$.
Assume that $x\in V\setminus(W_1\cup W_2)$. Then $x\in C_i(h_1,\epsilon)\Leftrightarrow x\in C_i(h_2,\epsilon)$. This implies that both $H(h_1,\epsilon)$ and $H(h_2,\epsilon)$ act on $x$ by  affine transformations whose linear parts are of the same form.
Since for $i=1,\ldots, 6$ the affine maps $H(h_1,\epsilon)|_{C_i(h_1,\epsilon)}$ and $H(h_2,\epsilon)|_{C_i(h_2,\epsilon)}$ share a common fixed point,

we get that
\[H(h_1,\epsilon)(x)-H(h_2,\epsilon)(x)=\text{lin}(H(h_1,\epsilon)|_{C_i(h_1,\epsilon)})x-\text{lin}(H(h_2,\epsilon)|_{C_i(h_2,\epsilon)})x.\]
By using the formula \eqref{norma} for each $i=1,\ldots,6$ we get
\[
\|\text{lin}(H(h_1,\epsilon)|_{C_i(h_1,\epsilon)})-\text{lin}(H(h_1,\epsilon)|_{C_i(h_2,\epsilon)})\|\le \frac{10}{\epsilon}(h_1-h_2).
\]
Since the above norm is the operator norm for $H(h_1,\epsilon)-H(h_2,\epsilon)$ (which is a linear transformation), this implies that
\[
\sup_{x\in V\setminus(W_1\cup W_2)}\|H(h_1,\epsilon)(x)-H(h_2,\epsilon)(x)\|\leq \frac{10}{\epsilon}(h_1-h_2)\|x\|<\frac{20a}{\epsilon}(h_1-h_2).
\]
Next note that $W_1=C_3(h_1,\epsilon)\cap C_1(h_2,\epsilon)$ and $W_2=C_4(h_1,\epsilon)\cap C_2(h_2,\epsilon)$.
We now prove that for $x\in W_1\cup W_2$ we also have $\|H(h_1,\epsilon)(x)-H(h_1,\epsilon)(x)\|\leq \frac{20a}{\epsilon}(h_1-h_2)$.
Suppose that $x\in W_1$; the proof for $x\in W_2$ is analogous. Consider 
the segment $I_x\subset W_1$ with endpoints on 
the segments $\overline{(0,0),y(h_1,\epsilon)}$ and $\overline{(0,0), y(h_2,\epsilon)}$ such that $x\in I_x$ and 
which is parallel to $\overline{y(h_1,\epsilon), y(h_2,\epsilon)}$. Then
\begin{equation}
|I_x|\leq \|y(h_1,\epsilon)-y(h_2,\epsilon)\|=a\sqrt{1+\epsilon} (h_1-h_2)<2a(h_1-h_2).
\end{equation}
Note that $I_x$ divides 
the intervals $\overline{(0,0),y(h_1,\epsilon)}$ and $\overline{(0,0), y(h_2,\epsilon)}$ with the same ratio. Since affine transformations do not change the ratio of the lengths of 
collinear segments and
\[
H(h_1,\epsilon)(\overline{(0,0),y(h_1,\epsilon)})= H(h_2,\epsilon)(\overline{(0,0), y(h_2,\epsilon)})= \overline{(0,0),(0,
\epsilon a)},
\]
it follows  that 
the segments $H(h_1,\epsilon)(I_x)$ and $H(h_2,\epsilon)(I_x)$ share a common endpoint in $\overline{(0,0),(0,\epsilon a)}$.
Using again the conservation of the ratio of the lengths of collinear segments by affine transformations, we get 
\[
\frac{|H(h_1,\epsilon)(I_x)|}{|I_x|}=\frac{|H(h_1,\epsilon)(\overline{y(h_1,\epsilon),y(h_2,\epsilon)})|}{|\overline{y(h_1,\epsilon),y(h_2,\epsilon)}|}=\frac{|H(h_1,\epsilon)(\overline{y(h_1,\epsilon),(\epsilon a,0)})|}{|\overline{y(h_1,\epsilon),(\epsilon a,0)}|}=\frac{1}{1+h_1}\le 1,
\]
and
\[
\frac{|H(h_2,\epsilon)(I_x)|}{|I_x|}=\frac{|H(h_2,\epsilon)(\overline{y(h_1,\epsilon),y(h_2,\epsilon)})|}{|\overline{y(h_1,\epsilon),y(h_2,\epsilon)}|}=\frac{|H(h_2,\epsilon)(\overline{(0,a),y(h_2,\epsilon)})|}{|
\overline{(0,a),y(h_2,\epsilon)}|}=\frac{1}{1-h_2}\le 2.
\]
As $H(h_1,\epsilon)(x)\in H(h_1,\epsilon)(I_x)$ and $H(h_2,\epsilon)(x)\in H(h_2,\epsilon)(I_x)$, we obtain
\[
\begin{split}
\|H(h_1,\epsilon)(x)-H(h_2,\epsilon)(x)\|&\leq |H(h_1,\epsilon)(I_x)|+|H(h_2,\epsilon)(I_x)|\leq3|I_x|\\&< 6a(h_2-h_1)<\frac{20}{\epsilon}a(h_1-h_2).
\end{split}
\]
By proceeding analogously for $h_2\ge h_1$ we prove that
\[
\|H(h_1,\epsilon)(x)-H(h_2,\epsilon)(x)\|\leq \frac{20}{\epsilon}a(h_2-h_1).
\]
The case when $h_1$ and $h_2$ are non-positive is analogous. To prove 
the similar inequality when $h_1$ and $h_2$ are of different sign, let $h_0:=0$. Then $H(h_0,\epsilon)=Id$.
Using the previous case we show that
\[
\|H(h_2,\epsilon)(x)-x\|\leq \frac{20}{\epsilon}a|h_0-h_2|,
\]
and
\[
\|H(h_1,\epsilon)(x)-x\|\leq \frac{20}{\epsilon}a|h_1-h_0|.
\]
Since $h_1$, $h_2$ have different sign, the numbers $h_0-h_2$, $h_1-h_0$ are of the same sign. It follows that
\[
\|H(h_1,\epsilon)(x)-H(h_2,\epsilon)(x)\|\leq \|H(h_1,\epsilon)(x)-x\|+\|H(h_2,\epsilon)(x)-x\|\leq \frac{20}{\epsilon}a|h_2-h_1|.
\]

By proceeding 
as in the proof of \eqref{conteq2} and replacing $H(h_i,\epsilon)$ by $H^{-1}(h_i,\epsilon)$ for $i=1,2$, 
we can prove that for every $x\in V$ we also have
\begin{equation}\label{conteq1}
\|H^{-1}(h_2,\epsilon)(x)-H^{-1}(h_1,\epsilon)(x)\|\leq \frac{20}{\epsilon}a|h_2-h_1|,
\end{equation}
which concludes the proof of the lemma.
\end{proof}

\begin{lm}\label{trojkat0}
Let $V$, $V_1$ and $V_2$ be the triangles defined 
above.  Let  $0<\hat\ep<10^{-8}$
, and let $\kappa>0$. Suppose that $f\in L^1(V)$ satisfies
\begin{equation}\label{assum}
f>\kappa
;\quad
\frac{1}{1+\hat\ep}<f\text{ or }f<\frac{1}{1-\hat\ep};\quad
\int_Vf(x)dx=\operatorname{Leb}(V).
\end{equation}
Let $\mu_f:=f dx$. Then there exists  a piecewise affine homeomorphism $H_f:V\to V$ such that
\begin{itemize}
	\item[(i)] $(H_f)_*\mu_f(V_i)=\operatorname{Leb}(V_i)$ for $i=1,2$;
	\item[(ii)] $H_f|_{\partial V}=Id|_{\partial V}$;
	\item[(iii)] there 
	exists $-\hat\ep<h_f<\hat\ep$ such that  $\det(DH_f^{-1})$ is constant on each  $V_i$ and is equal to $1\pm h_f$;
	\item[(iv)] the Lipschitz constants of $H_f$ and $H^{-1}_f$ are less than $\frac{5}{4}$;
	\item[(v)] the maps $f\mapsto H_f\in Hom(V)$ and $f\mapsto\det(DH_f^{-1})\in L^{\infty}(V)$  are continuous 
	on the set of $f\in  L^1(V)$ satisfying~\eqref{assum} for a given $\kappa$.
\end{itemize}
\end{lm}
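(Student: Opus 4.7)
My plan is to apply the family of piecewise affine homeomorphisms $\{H(h,\epsilon)\}_{|h|<1/2}$ constructed above for a convenient fixed value of $\epsilon$ (say $\epsilon = 1/2$), and to select $h = h_f$ via the intermediate value theorem so that the mass-balance condition (i) is satisfied. Since $|h| < \hat\epsilon < 10^{-8}$, the explicit matrices $\operatorname{lin}(H(h,\epsilon)|_{C_i})$ differ from the identity by entries of order $O(\hat\epsilon)$, so the formula \eqref{norma} yields operator norms well below $5/4$ for both these matrices and their inverses. This immediately delivers (iv) once $h_f$ has been chosen.

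To choose $h_f$, I would study the single-variable function
\[
\psi_f(h) := (H(h,\epsilon))_*\mu_f(V_1) = \mu_f\bigl(H(h,\epsilon)^{-1}(V_1)\bigr).
\]
The preimage $R(h) := H(h,\epsilon)^{-1}(V_1)$ is a polygonal subregion of $V$ that coincides with $V_1$ at $h = 0$ and whose area changes linearly in $h$ at rate $a^2/2$, the added or removed piece being the triangle with vertices $(0,0)$, $(a,0)$, $y(h,\epsilon)$ (or its $J$-reflected counterpart for $h < 0$, via \eqref{inwol}). Using the lower bound $f > \kappa$, I would show that $\psi_f$ is continuous and strictly decreasing, with $|\psi'_f(h)|$ bounded below by a positive multiple of $\kappa a^2$. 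The closeness-to-$1$ hypothesis on $f$, combined with the normalization $\int_V f = \operatorname{Leb}(V)$, makes $|\psi_f(0) - a^2/2| = |\mu_f(V_1) - \operatorname{Leb}(V_1)|$ small enough that the intermediate value theorem produces a unique $h_f \in (-\hat\epsilon, \hat\epsilon)$ with $\psi_f(h_f) = \operatorname{Leb}(V_1)$. Setting $H_f := H(h_f, \epsilon)$: property (i) holds by construction together with $\int_V f = a^2$; (ii) comes directly from \eqref{Hdef}; (iii) is a restatement of the reciprocals of \eqref{proporcje2} and \eqref{proporcje3} (determinants $1 - h_f$ on $V_1$ and $1 + h_f$ on $V_2$, reading the sign consistently via \eqref{inwol} when $h_f < 0$); and (iv) was handled in the first step.

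For property (v), observe the uniform Lipschitz estimate $|\psi_{f_1}(h) - \psi_{f_2}(h)| \le \|f_1 - f_2\|_{L^1(V)}$, valid for every $h$. Combined with the uniform lower bound on $|\psi'_f|$, it yields $|h_{f_1} - h_{f_2}| \le C\|f_1 - f_2\|_{L^1(V)}$, so $f \mapsto h_f$ is continuous. Lemma \ref{Hodh} then gives continuity of $f \mapsto H_f$ into $\operatorname{Hom}(V)$, and the piecewise-constant values $1 \pm h_f$ of $\det(DH_f^{-1})$ give continuity into $L^\infty(V)$ directly.

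The main obstacle is the quantitative bookkeeping needed to ensure $h_f \in (-\hat\epsilon, \hat\epsilon)$: one must carefully balance the closeness-to-$1$ bound on $f$ (which controls $|\mu_f(V_1) - a^2/2|$) against the slope $\kappa a^2$ of $\psi_f$ and against the Lipschitz-constant constraint from the first step. A secondary subtle point is the geometric identification of $R(h)$, which has to be treated separately for $h\ge 0$ and $h<0$ via the involution \eqref{inwol}, verifying that $\psi_f$ is genuinely $C^0$ and monotone across $h = 0$. Everything else is a straightforward consequence of the explicit structure of $H(h,\epsilon)$ worked out prior to the statement.
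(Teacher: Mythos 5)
Your overall strategy is the same as the paper's (choose $h = h_f$ so that the mass-balance condition (i) holds and set $H_f := H(h_f,\epsilon)$), but two of your quantitative steps fail as written.

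\textbf{The choice $\epsilon = 1/2$ breaks the Lipschitz bound (iv).}\ You claim that because $|h| < \hat\ep$, all the matrices $\operatorname{lin}(H(h,\epsilon)|_{C_i})$ differ from the identity by entries of size $O(\hat\ep)$. This is false for $C_3$ and $C_4$: their $(1,2)$ entries are $\frac{2\epsilon}{1+h}$ and $\frac{\epsilon(2+h-2\epsilon)}{(1+h)(1-\epsilon)}$, which are $\approx 2\epsilon$, not $O(h)$. With $\epsilon = 1/2$ the matrix on $C_3$ is approximately $\bigl(\begin{smallmatrix}1&1\\0&1\end{smallmatrix}\bigr)$, whose operator norm from~\eqref{norma} is $\sqrt{(3+\sqrt{5})/2}\approx 1.618 > 5/4$. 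There is a genuine tension here: the $(1,2)$ entries grow with $\epsilon$ while the $(2,1)$ entries behave like $h/\epsilon$ and grow as $\epsilon \to 0$. The paper resolves it by taking $\epsilon = \sqrt{\hat\ep}$, so that every off-diagonal entry is $O(\sqrt{\hat\ep})$; this balance is what makes (iv) work, and it cannot be replaced by a fixed $\epsilon$ independent of $\hat\ep$.

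\textbf{The mean-value argument with slope $\gtrsim\kappa a^2$ does not give $|h_f| < \hat\ep$.}\ From the normalization $\int_V f = a^2$ and the one-sided closeness hypothesis (note it is an \emph{or}, not an \emph{and}) you can indeed bound $|\mu_f(V_1) - a^2/2|$ by roughly $\tfrac{a^2}{2}\cdot\hat\ep$. But dividing by a slope lower bound of order $\kappa a^2$ only gives $|h_f| \lesssim \hat\ep/\kappa$, which is larger than $\hat\ep$ whenever $\kappa < 1$ (and $\kappa < 1$ is forced by $f > \kappa$ and $\int_V f = \operatorname{Leb}(V)$ unless $f\equiv 1$). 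So (iii) is not established. The paper avoids this loss entirely: it compares $\tfrac{1}{2}a^2 = \mu_f(C_3\cup\dots\cup C_6)$ directly with $\frac{1}{1+\hat\ep}\operatorname{Leb}(C_3\cup\dots\cup C_6) = \frac{1}{1+\hat\ep}\tfrac{a^2}{2}(1+|h_f|)$ (or the analogous inequality with $C_1\cup C_2$ and the $\frac{1}{1-\hat\ep}$ upper bound), yielding $|h_f| < \hat\ep$ with no dependence on $\kappa$. If you want to keep the MVT framing, you would have to use the slope bound $\frac{1}{1+\hat\ep}\cdot\tfrac{a^2}{2}$ (coming from the closeness hypothesis, not from $\kappa$), and pair it with a sharper estimate of $|\mu_f(V_1)-a^2/2|$; as written, the argument does not close. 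The remaining parts of your sketch (uniqueness of $h_f$, (i)--(iii), (v) via Lemma~\ref{Hodh} and the Lipschitz continuity of $f\mapsto h_f$) are in line with the paper once these two points are fixed.
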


\begin{proof}
Since $\mu_f$ is an absolutely continuous measure with respect to $\operatorname{Leb}($, there are no segments of positive measure $\mu_f$ in $V$. Hence there exists a unique $-1<h_f< 1$ and a point $y=y_f=(\sqrt{\hat\ep}a(1-|h_f|),h_fa)$ such that the 
quadrilateral with vertices $\{(0,a),(0,0),(a,0),y\}$ and the 
quadrilateral with vertices $\{(0,-a),(0,0),(a,0),y\}$ have the same measure $\mu_f$ equal to $\frac{1}{2}\operatorname{Leb}(V)$.
	
	Consider the triangles $C_i=C_i^f:=C_i(h_f,\sqrt{\hat{\ep}})$ for $i=1,\ldots,6$.
	By the definition of $h_f$ we have
	\[
	\mu_f(C_1\cup C_2)=\mu_f(C_3\cup C_4\cup C_5\cup C_6)=\frac{1}{2}\operatorname{Leb}(V).
	\]
	We now evaluate the bounds on $h_f$. Assume that $f>\frac{1}{1+\hat\ep}$. Since $\operatorname{Leb}(V)=a^2$ we have
	\[
	\begin{split}
	\frac{1}{2}a^2&=\mu_f(C_3\cup C_4\cup C_5\cup C_6)=\int_{C_3\cup C_4\cup C_5\cup C_6}f(x)dx\\
	&>\frac{1}{1+\hat\ep}\operatorname{Leb}(C_3\cup C_4\cup C_5\cup C_6)=\frac{1}{1+\hat\ep}\big(\frac{1}{2}(a+|h_f|a)a\big).
	\end{split}
	\]
	Hence
	\begin{equation}
	f>\frac{1}{1+\hat\ep}\Rightarrow |h_f|<\hat\ep.
	\end{equation}
	Now assume that $f<\frac{1}{1-\hat\ep}$. Then we have
	\[
	\begin{split}
	\frac{1}{2}a^2&=\mu_f(C_1\cup C_2)=\int_{C_1\cup C_2}f(x)dx\\
	&<\frac{1}{(1-\hat\ep)}\operatorname{Leb}(C_1\cup C_2)=\frac{1}{1-\hat\ep}\big(\frac{1}{2}(a-|h_f|a)a\big).
	\end{split}
	\]
	This 
	shows
	\begin{equation}
	f<\frac{1}{1-\hat\ep}\Rightarrow |h_f|<\hat\ep.
	\end{equation}
	

	\textbf{Definition of $H_f$.}
	Define $H_f:=H(h_f,\sqrt{\hat{\ep}})$, a piecewise 
	affine homeomorphism on $V$. Note that by definition we have $H_f|_{\partial V}=Id|_{\partial V}$.
	 Moreover
	 \[
	 (H_f)_*\mu_f(V_1)=\mu_f(C_1\cup C_2)=\frac{1}{2}\operatorname{Leb}(V)=\operatorname{Leb}(V_1)
	 \]
	 and
	 \[
	 (H_f)_*\mu_f(V_2)=\mu_f(C_3\cup C_4\cup C_5\cup C_6)=\frac{1}{2}\operatorname{Leb}(V)=\operatorname{Leb}(V_2).
	 \]
	 Hence $H_f$ satisfies points (i) and (ii).

	Furthermore, by \eqref{proporcje2} and \eqref{proporcje3} we have that
	\begin{equation}\label{proporcje4}
	\det\big(\text{lin}(H_f|_{C_1})\big)=\det\big(\text{lin}(H_f|_{C_2})\big)=\frac{1}{1-|h_f|}\ge 1,
	\end{equation}
	and
	\begin{equation}\label{proporcje6}
	\begin{split}
	&\det\big(\text{lin}(H_f|_{C_3})\big)=\det\big(\text{lin}(H_f|_{C_4})\big)
	\\&\det\big(\text{lin}(H_f|_{C_5})\big)=\det\big(\text{lin}(H_f|_{C_6})\big)=\frac{1}{1+|h_f|}\le 1.
	\end{split}
	\end{equation}
	Note that $V_1=\hat C_1\cup \hat C_2$ and $V_2=\hat C_3\cup\hat C_4\cup\hat C_5\cup\hat C_6$ for $h_f\ge 0$ and $V_1=\hat C_3\cup\hat C_4\cup\hat C_5\cup\hat C_6$ and $V_2=\hat C_1\cup \hat C_2$ for $h_f\le 0$. Hence by \eqref{proporcje4} and \eqref{proporcje6} we have
	\begin{equation}\label{proporcje7}
	\det(\text{lin}(H_f^{-1}|_{V_1}))=1-h_f\quad\text{ and }\quad\det(\text{lin}(H_f^{-1}|_{V_2}))=1+h_f
	\end{equation}
	
	Thus $H_f$ satisfies (iii).
	
	\textbf{The norm of the linear part.}
	We will now prove that 
	$\|\text{lin}(H_f)|_{C_i})\|<\frac{5}{4}$ for $i=1,\ldots,6$. Note that each of the matrices $\text{lin}(H_f)|_{C_i}$ is of the form
	$M=\bigl(\begin{smallmatrix}
	1+b&c \\ d&1+e
	\end{smallmatrix} \bigr)$,
	where $|b|,|c|,|d|,|e|<3\sqrt{\hat\ep}$. Hence, using the formula \eqref{norma} and the fact that $\ep<10^{-8}$, we obtain that
	\begin{equation}
	\begin{split}
	      \|\text{lin}&(H_f)|_{C_i}\|<\\
	&<\sqrt{\frac{2(1+3\sqrt{\hat\ep})+2\cdot 3\sqrt{\hat\ep}+\sqrt{(2(1+3\sqrt{\hat\ep})+2\cdot 3\sqrt{\hat\ep})^2-4(\det(\text{lin}(H_f)|_{\hat C_i}))^2}}{2}}\\
	&<\sqrt{\frac{2+12\sqrt{\hat\ep}+36\hat\ep+\sqrt{(2+12\sqrt{\hat\ep}+36\hat\ep)^2-4(\frac{1}{1+\hat{\ep}})^2}}{2}}\\
	&<\sqrt{1+5\sqrt[4]{\hat\ep}}<\frac{5}{4}.
	\end{split}
	\end{equation}
	In the same way we prove that $\|\text{lin}(H_f)^{-1}|_{\hat C_i})\|<\frac{5}{4}$. Thus $H_f$ satisfies (iv).


\textbf{Continuity of $f\mapsto H_f$.}
 Suppose that $f,g\in L^1(V)$ satisfy \eqref{assum}. By Lemma \ref{Hodh}, we already know that
  \begin{equation}\label{conteq3}
  d_{\operatorname{Hom}}(H_f,H_g)\le\frac{20}{\sqrt{\hat{\ep}}}a|h_f-h_g|.
  \end{equation}
  We prove that
 \begin{equation}\label{hcont}
 |h_f-h_g|\le C\|f-g\|_{L^{1}},
 \end{equation}
for some constant $C>0$ depending only on $a$ and $\kappa$. Let $\de:=\|f-g\|_{L^1}$.

\textbf{Case $h_f$ and $h_g$ have the same sign.}
Assume that $h_f\geq h_g\geq 0$ or $0\geq h_g\geq h_f$. Then
	\[
	\begin{split}
	0&=\mu_f(C^f_1\cup C^f_2)-\mu_g(C_1^g\cup C_2^g)\\ &=\int_{C^f_1\cup C^f_2}f(x)dx-\int_{C^g_1\cup C^g_2}g(x)dx\\
	&=\int_{C^g_1\cup C^g_2}(f-g)(x)dx-
	      \int_{(C^g_1\cup C^g_2)\setminus(C^f_1\cup C^f_2)}g(x)dx\\
	&\le\de-|h_f-h_g|\frac{a\kappa}{2},
	\end{split}
	\]
	and hence
	\[
	|h_f-h_g|\le\frac{2\de}{a\kappa}.
	\]

Thus \eqref{hcont} holds with 
$C=\frac{2}{a\kappa}$.

\textbf{Case of $h_f$, $h_g$ with different sign.}
Suppose that $h_f\ge 0\ge h_g$. Then we have
	\[
	\begin{split}
	0&=\mu_f(C^f_1\cup C^f_2)-\mu_g(C_3^g\cup C_4^g\cup C_5^g\cup C_6^g)\\
	&=\int_{C^f_1\cup C^f_2}f(x)dx-\int_{C_3^g\cup C_4^g\cup C_5^g\cup C_6^g}g(x)dx\\
	&=\int_{C^f_1\cup C^f_2}(f-g)(x)dx-\int_{(C_3^g\cup C_4^g\cup C_5^g\cup C_6^g)\setminus(C^f_1\cup C^f_2)}g(x)dx\\
	&\le\de-|h_f-h_g|\frac{a\kappa}{2}.
	\end{split}
	\]
Thus we have
	\[
	0\le h_f-h_g\le\frac{2\de}{a\kappa},
	\]
which completes the proof of \eqref{hcont}.

By combining \eqref{conteq3} and \eqref{hcont} we obtain
\[
d_{Hom}(H_f,H_g)\leq \frac{10}{\sqrt{\hat{\ep}}}|h_f-h_g|<\frac{10}{\sqrt{\hat{\ep}}}C\|f-g\|_{L^{1}}.
\]
This concludes the proof of the continuity of $f\mapsto H_f$. By the formula given in \eqref{proporcje7}, the continuity of the map $f\mapsto h_f$ also implies the continuity of the map $f\mapsto\det(DH_f^{-1})\in L^\infty(V)$. Thus (v) is proved.

	\end{proof}
	
	Let $(X,\mu)$ be a standard metric probability space.
	For $0<s_1<s_2$, define $\mathcal W(X,s_1,s_2)\subset L^1(X,\mu)$ 
	by
	\begin{equation}\label{ineq+-}
	\mathcal W(X,s_1,s_2):=\{f\in L^1(X); s_1< f< s_2; \int_Xf\,d\mu(x)=\mu(X)\}.
	\end{equation}
	Let $V$ be 
	the triangle with vertices $(0,-a),(0,a),(a,0)$, 
	equipped with the (normalized) 2-dimensional Lebesgue measure.
	We need the following lemma.
	\begin{lm}\label{ciagsklad}
		Let $H:\mathcal W(V,s_1,s_2)\to \operatorname{Hom}(V)$ be a continuous map.
		Assume that there exists $\ell>0$ such that, for every $f\in \mathcal W(V,s_1,s_2)$, the homeomorphism $H(f)^{-1}$ is Lipschitz with constant $\ell$.
Then the transformation
		\[
		W(s_1,s_2)\ni f\mapsto f\circ H(f)\in L^1(V)
		\]
		is continuous.
			\end{lm}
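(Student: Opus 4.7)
Fix $f\in\mathcal W(V,s_1,s_2)$ and a sequence $(f_n)$ in $\mathcal W(V,s_1,s_2)$ with $\|f_n-f\|_{L^1}\to 0$; the goal is to show $\|f_n\circ H(f_n)-f\circ H(f)\|_{L^1}\to 0$. I will split
\[
\|f_n\circ H(f_n)-f\circ H(f)\|_{L^1}\leq \|(f_n-f)\circ H(f_n)\|_{L^1}+\|f\circ H(f_n)-f\circ H(f)\|_{L^1}
\]
and control each summand separately, using two distinct properties of $H$: the uniform Lipschitz bound on $H(f)^{-1}$ for the first term, and the continuity of $f\mapsto H(f)$ for the second.

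The first term will be handled by a change-of-measure argument. Since $H(f_n)^{-1}$ is $\ell$-Lipschitz on the planar set $V$, the standard volume bound for Lipschitz maps gives $\operatorname{Leb}(H(f_n)^{-1}(B))\leq \ell^2\operatorname{Leb}(B)$ for every Borel $B\subset V$. Equivalently, the pushforward measure $(H(f_n))_*\operatorname{Leb}$ is absolutely continuous with density bounded above by $\ell^2$. Hence for any $\varphi\in L^1(V)$ one has $\|\varphi\circ H(f_n)\|_{L^1}\leq \ell^2\|\varphi\|_{L^1}$, and applying this with $\varphi=f_n-f$ yields $\|(f_n-f)\circ H(f_n)\|_{L^1}\leq \ell^2\|f_n-f\|_{L^1}\to 0$.

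For the second term, approximate $f$ by continuous functions: given $\varepsilon>0$, choose $g\in C(V)$ with $\|f-g\|_{L^1}<\varepsilon$ (such $g$ exists by density of $C(V)$ in $L^1(V)$). Using the same pushforward bound for both $H(f_n)$ and $H(f)$, one has $\|f\circ H(f_n)-g\circ H(f_n)\|_{L^1}\leq \ell^2\varepsilon$ and $\|g\circ H(f)-f\circ H(f)\|_{L^1}\leq \ell^2\varepsilon$. It then suffices to prove that $\|g\circ H(f_n)-g\circ H(f)\|_{L^1}\to 0$. By the continuity of $H$, we have $d_{\operatorname{Hom}}(H(f_n),H(f))\to 0$, so in particular $H(f_n)\to H(f)$ uniformly on the compact set $V$. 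Since $g$ is continuous on a compact set, it is uniformly continuous, so $g\circ H(f_n)\to g\circ H(f)$ uniformly, and hence in $L^1$. Sending $n\to\infty$ first and then $\varepsilon\to 0$ completes the argument.

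The only mildly nontrivial step is the Lipschitz-to-Jacobian estimate $\operatorname{Leb}(H(f_n)^{-1}(B))\leq \ell^2\operatorname{Leb}(B)$; this is the standard consequence of the area formula for Lipschitz maps between subsets of $\mathbb R^2$ (or, more elementarily, follows by covering $B$ with small balls and using that an $\ell$-Lipschitz map sends a ball of radius $r$ into a ball of radius $\ell r$). Everything else is a routine combination of change of measure and density of continuous functions in $L^1$.
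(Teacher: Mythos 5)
Your proof is correct and takes essentially the same route as the paper: both split off a term handled by the $\ell^2$ change-of-measure bound (from the Lipschitz estimate on $H(f)^{-1}$) and a term handled by approximating $f$ by a uniformly continuous function, then using the continuity of $f\mapsto H(f)$ and uniform continuity on the compact set $V$.
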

			\begin{proof}
				Take $f\in \mathcal W(V,s_1,s_2)$ and $\epsilon>0$. Let $f_\epsilon:V\to\re$ be a uniformly continuous function such that $\|f_\epsilon-f\|_{L^1}<\epsilon$. Let $0<\de<\epsilon$ be such that
				\begin{equation}\label{jednciag}
				\|x-y\|<\delta\Rightarrow |f_\epsilon(x)-f_\epsilon(y)|<\epsilon.
				\end{equation}
				Consider $0<\de'<\epsilon$ such that for every $g\in W(V,s_1,s_2)$ we have
				\begin{equation}\label{Homblisko}
				\|f-g\|_{L^1}<\de'\Rightarrow d_{\operatorname{Hom}}(H(f),H(g))<\de,
				\end{equation}
				and let $g\in W(V,s_1,s_2)$ be such that $\|f-g\|_{L^1}<\de'$.
				Since $H(g)^{-1}$ is Lipschitz with constant $\ell$, $H(g)_*Leb$ is an absolutely continuous measure with density bounded by $\ell^2$. Hence for every $h\in L^1(V)$ we have
				\begin{equation}\label{lipnorma}
				\|h\circ H(g)\|_{L^1}=\int_V|h\circ H(g)(x)|dx\le\int_V\ell^2|h(x)|dx=\ell^2\|h\|_{L^1}.
				\end{equation}
				Then
				\[
				\|f\circ H(f)-g\circ H(g)\|_{L^1}\le\|f\circ H(f)-f\circ H(g)\|_{L^1}+\|f\circ H(g)-g\circ H(g)\|_{L^1}
				\]
				and, by \eqref{lipnorma},
				\[
				\|f\circ H(g)-g\circ H(g)\|_{L^1}\le
				\ell^2\|f-g\|_{L^1}.
				\]
				Moreover
				\[
				\begin{split}
				\|f\circ H(f)-f\circ H(g)\|_{L^1}\le&\|f\circ H(f)-f_\epsilon\circ H(f)\|_{L^1}+\|f_\epsilon\circ H(f)-f_\epsilon\circ H(g)\|_{L^1}\\&+\|f_\epsilon\circ H(g)-f\circ H(g)\|_{L^1}\\ \le&2\ell^2\|f-f_\epsilon\|_{L^1}+\|f_\epsilon\circ H(f)-f_\epsilon\circ H(g)\|_{L^1},
				\end{split}
				\]
				where the last inequality 
				comes from~\eqref{lipnorma}. By \eqref{Homblisko} and \eqref{jednciag}, we have
				\[
				\|f_\epsilon\circ H(f)-f_\epsilon\circ H(g)\|_{L^1}<\epsilon.
				\]
				To sum up we obtain
				\[
				\|f\circ H(f)-g\circ H(g)\|_{L^1}\le \ell^2\|f-g\|_{L^1}+2\ell^2\|f-f_\epsilon\|_{L^1}+\epsilon\le\big(3\ell^2+1\big)\epsilon,
				\]
				which proves the assertion.
			\end{proof}
			\begin{uw}\label{ciagsklad1}
				The statement of Lemma \ref{ciagsklad} remains valid if we replace $V$ with any $2$-dimensional Riemannian surface $M$.
			\end{uw}


\begin{lm}\label{trojkat}
Let $0<\hat\ep<10^{-8}$. 
Let $f\in\mathcal W(V,\frac{1}{1+\hat\ep},\frac{1}{1-\hat\ep})$
and 
$\mu_f:=f dx$. Then there exists a homeomorphism $H_f:V\to V$, depending continuously on $f$, such that $(H_f)_*\mu_f=Leb$ and $H_f|_{\partial V}=Id|_{\partial V}$.
\end{lm}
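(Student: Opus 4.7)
The plan is to construct $H_f$ as the limit of a sequence of piecewise affine homeomorphisms $H_f^{(n)}$ obtained by iterating Lemma~\ref{trojkat0} along a dyadic subdivision of $V$. Setting $H_f^{(0)} := \mathrm{Id}$, at each step $n$ I maintain a partition $\mathcal P_n$ of $V$ into $2^n$ isoceles right sub-triangles together with a piecewise affine $H_f^{(n)}: V \to V$ fixing $\partial V$ and satisfying $(H_f^{(n)})_*\mu_f(T) = \operatorname{Leb}(T)$ for every $T \in \mathcal P_n$. To pass from level $n$ to level $n+1$: on each $T \in \mathcal P_n$ I pull back to the reference triangle of Lemma~\ref{trojkat0} via an affine similarity preserving the isoceles-right shape, apply Lemma~\ref{trojkat0} to the pushed density $f^{(n)}|_T$ to obtain a local homeomorphism $\tilde H^T: T \to T$ fixing $\partial T$ which equidistributes the mass across the two standard sub-triangles of $T$, define $g^{(n+1)}$ by gluing these $\tilde H^T$ across $\mathcal P_n$, and set $H_f^{(n+1)} := g^{(n+1)} \circ H_f^{(n)}$.

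For convergence of the sequence $(H_f^{(n)})$, I use Lemma~\ref{Hodh} to bound the per-triangle displacement $d_{\operatorname{Hom}}(\tilde H^T, \mathrm{Id}_T)$ by $C\,\operatorname{diam}(T)\sqrt{\hat\ep_n^T}$, where $\hat\ep_n^T$ is the deviation parameter applicable to $f^{(n)}|_T$. Since $\operatorname{diam}(T) = O(a\cdot 2^{-n/2})$ along the dyadic refinement, and the crucial observation is that after each application of Lemma~\ref{trojkat0} each sub-triangle still satisfies one of the two one-sided density bounds $f<\frac{1}{1-\hat\ep}$ or $f>\frac{1}{1+\hat\ep}$ (the condition actually used by Lemma~\ref{trojkat0}), so that $\hat\ep_n^T$ remains controlled throughout the recursion, the displacements $\sum_n\max_{T \in \mathcal P_n}d_{\operatorname{Hom}}(\tilde H^T, \mathrm{Id}_T)$ are summable and $(H_f^{(n)})$ is uniformly Cauchy. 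The Lipschitz bound $\|D\tilde H^T\|,\|D(\tilde H^T)^{-1}\| \leq 5/4$ from Lemma~\ref{trojkat0}(iv) ensures that the inverses also converge uniformly, so the limit $H_f$ is a homeomorphism. Since $(H_f)_*\mu_f$ agrees with $\operatorname{Leb}$ on every triangle appearing in any $\mathcal P_n$, and $\operatorname{diam}(T) \to 0$ makes $\bigcup_n\mathcal P_n$ generate $\mathcal B(V)$, we conclude $(H_f)_*\mu_f = \operatorname{Leb}$. The identity $H_f|_{\partial V} = \mathrm{Id}|_{\partial V}$ holds at every step and passes to the limit.

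For continuity of $f \mapsto H_f$, Lemma~\ref{trojkat0}(v) gives continuous dependence of each $\tilde H^T$ on $f^{(n)}|_T$, and Lemma~\ref{ciagsklad} (with Remark~\ref{ciagsklad1}) gives continuity of the composed density on $f$ at each level; the uniform-in-$f$ character of the summable displacement bounds then propagates continuity to the limit $H_f$. The main obstacle is the bookkeeping of the deviation parameters $\hat\ep_n^T$ through the dyadic recursion: after each application of Lemma~\ref{trojkat0} the density on one of the two sub-triangles is multiplied by a factor $(1-h)<1$ and on the other by $(1+h)>1$, so one of the one-sided bounds survives on each sub-triangle while the other may degrade by $h$; the hypothesis $\hat\ep<10^{-8}$ is precisely what ensures that this degradation, combined with the geometric shrinkage of triangle diameters, keeps $\hat\ep_n^T$ safely within the range of validity of Lemma~\ref{trojkat0} and the total displacement $\sum_n\operatorname{diam}(T)\sqrt{\hat\ep_n^T}$ finite.
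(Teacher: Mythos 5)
Your proposal follows essentially the same route as the paper: dyadic refinement of $V$ into congruent right isosceles triangles, iterated application of Lemma~\ref{trojkat0} on each sub-triangle, preservation of a one-sided density bound thanks to the piecewise-constant Jacobian of the accumulated composition, and continuity via Lemma~\ref{trojkat0}(v) together with Lemma~\ref{ciagsklad}.

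Two points in your convergence discussion are glossed over and deserve to be made explicit. First, for the convergence of the inverses, the per-step displacement of $(H_f^{n+1}\circ\cdots\circ H_f^1)^{-1}$ from $(H_f^n\circ\cdots\circ H_f^1)^{-1}$ is not simply $d_{\operatorname{Hom}}((H_f^{n+1})^{-1},\operatorname{Id})$: since the new factor $(H_f^{n+1})^{-1}$ sits on the \emph{inside} of the composition, this displacement gets amplified by the Lipschitz constant $(\tfrac{5}{4})^n$ of $(H_f^n\circ\cdots\circ H_f^1)^{-1}$. Summability therefore hinges on the product $(\tfrac{5}{4})^n\cdot 2^{-n/2}=(\tfrac{5}{4\sqrt2})^n$ decaying geometrically; this is precisely the computation the paper makes via $\operatorname{diam}(W_i^n)<2(\tfrac{5}{4\sqrt2})^n\to 0$, and it is where the hypothesis $\hat\ep<10^{-8}$ actually enters (to guarantee the $\tfrac{5}{4}$ Lipschitz bound in Lemma~\ref{trojkat0}(iv)). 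Your closing remark attributes the role of $\hat\ep<10^{-8}$ to controlling a ``degradation'' of the deviation parameter $\hat\ep_n^T$, but in fact the surviving one-sided bound holds with the \emph{same} $\hat\ep$ at every level; no bookkeeping of a growing parameter is needed, and the constraint on $\hat\ep$ serves only to ensure the fixed Lipschitz estimate. Second, the paper's convergence argument avoids displacement estimates entirely: it observes that $H_f^m(V_i^n)=V_i^n$ for $m>n$, so the forward iterates eventually stay in a single $V_i^N$ of arbitrarily small diameter, and similarly for the inverses with $W_i^n$. Your summability argument is a valid alternative, but this nesting argument is cleaner and does not rely on Lemma~\ref{Hodh} for the convergence itself.
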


\begin{proof}
 We assume that $a=1$. The prove for $a\neq 1$ goes along the same lines. Let $f\in \mathcal W(V,\frac{1}{1+\hat\ep},\frac{1}{1-\hat\ep})$. Denote by $V_1^1$ and $V_2^1$ the two halves of $V$ which are both isosceles right triangles with $\operatorname{diam}(V_1^1)=\text{diam}(V_2^1)=\sqrt{2}$.

Inductively, for $n\in\n$ define the family $\{V_i^n\}_{i=1}^{2^n}$ of congruent right isosceles triangles which divide $V$, $V_i^n=V_{2i-1}^{n+1}\cup V_{2i}^{n+1}$ for $i=1,\ldots,2^n$ and they satisfy
\begin{equation}\label{diampodst}
\text{diam}(V_n^i)=\frac{1}{\sqrt{2}^{n-2}}.
\end{equation}

We will construct $H_f$ inductively as a limit of piecewise affine transformations.

In the first step, using Lemma~\ref{trojkat0}, we obtain a piecewise affine homeomorphism $H^1_f:V\to V$ such that
\[
(H^1_f)_*\mu_f(V_1^1)=(H^1_f)_*\mu_f(V_2^1)=\frac{1}{2}\operatorname{Leb}(V)\text{ and }H^1_f|_{\partial V}=Id|_{\partial V}.
\]
Moreover $\det(D(H_f^1)^{-1})$ is constant on each  $V_1^1$ and $V_2^1$.

Suppose now that for some $n\in\n$ we have constructed piecewise affine homeomorphisms $H^j_f:V\to V$ for $j=1,\ldots,n$ such that for all $i=1,\ldots,2^n$ we have
\begin{equation}\label{zalind}
(H^n_f\circ\ldots\circ H^1_f)_*\mu_f(V_i^n)=\frac{1}{2^n}\operatorname{Leb}(V)=\operatorname{Leb}(V_i^n)\text{ and }H^j_f|_{\partial V}=Id|_{\partial V}.
\end{equation}
Moreover, suppose that $\det(D(H^n_f\circ\ldots\circ H^1_f)^{-1})$ is constant on each $V_i^n$ and equals $d_i^n>0$.

With these assumptions the measure $(H^n_f\circ\ldots\circ H^1_f)_*\mu_f$ is  absolutely continuous  and its density $f_n:V\to\re_{>0}$ satisfies
\[f_n(x)=d_i^n \cdot f( (H^n_f\circ\ldots\circ H^1_f)^{-1}x)\text{ if }x\in V_i^n,\]
and by \eqref{zalind}
\[
\int_{V_i^n}f_n(x)dx=(H^n_f\circ\ldots\circ H^1_f)_*\mu_f(V_i^n)=\operatorname{Leb}(V_i^n).
\]

Take any $1\leq i\leq 2^n$. In view of \eqref{ineq+-}, if  $d_i^n<1$ then
\[f_n(x)<\frac{d_i^n}{1-\hat\ep}<\frac{1}{1-\hat\ep}\text{ for all }x\in V_i^n\]
and if $d_i^n\ge 1$ then \[f_n(x)>\frac{d_i^n}{1+\hat\ep}\geq\frac{1}{1+\hat\ep}\text{ for all }x\in V_i^n.\]
It follows that $f_n:V_i^n\to\re_{>0}$ is a positive density satisfying \eqref{assum} with $\kappa=\frac{d_i^n}{1+\hat\ep}$.
Hence we can apply Lemma~\ref{trojkat0} to the triangle $V_i^n$ and the density function  $f_n:V_i^n\to\re_{>0}$, thus obtaining a piecewise affine homeomorphism $H_f^{n+1,i}:V_i^n\to V_i^n$ such that
\begin{equation}\label{eq:trmiar}
(H_f^{n+1,i})_*(\mu_{f_n}|_{V_i^n})(V_{2i-1}^{n+1})=(H_f^{n+1,i})_*(\mu_{f_n}|_{V_i^n})(V_{2i}^{n+1})=\frac{1}{2}\operatorname{Leb}(V_i^n)=\frac{1}{2^{n+1}}\operatorname{Leb}(V),
\end{equation}
\begin{equation}\label{Hbrzeg}
H^{n+1,i}_f|_{\partial V_i^n}=Id|_{\partial V_i^n},
\end{equation}
and
\begin{equation}\label{eq:trdet}
\text{$\det D((H_f^{n+1,i})^{-1})$ is constant on both $V_{2i-1}^{n+1}$ and $V_{2i}^{n+1}$.}
\end{equation}
Finally we define a piecewise affine homeomorphism $H^{n+1}_f:V\to V$ given by
\[
H^{n+1}_f(x):=H^{n+1,i}_f(x)\text{ whenever }x\in V_i^n.
\]
Then $H^{n+1}_f(V^{n}_i)=V^n_i$ and, by \eqref{Hbrzeg}, we have  $H^{n+1}_f|_{\partial V}=Id|_{\partial V}$.
Moreover, by \eqref{eq:trmiar},
\[
(H^{n+1}_f\circ\ldots\circ H^1_f)_*\mu_f(V_{2i-1}^{n+1})=(H^{n+1}_f\circ\ldots\circ H^1_f)_*\mu_f(V_{2i}^{n+1})=\frac{1}{2^{n+1}}\operatorname{Leb}(V).
\]
In view of \eqref{eq:trdet},  $\det(D(H^{n+1}_f)^{-1})$ is constant on each $V^{n+1}_j$ for $j=1,\ldots,2^{n+1}$ and  then so 
is $\det(D(H^{n+1}_f\circ\ldots\circ H^1_f)^{-1})$. Thus, we have proved that $H_f^{n+1}$ satisfies the assumptions of the induction.

Note 
that, by (iii) in Lemma \ref{trojkat0}, we have 
\begin{equation}\label{jedenwyzn}
1-\hat{\ep}<\det(D(H^n_f)^{-1})<1+\hat{\ep}\text{ for every }n\in\n,
\end{equation}\
and since $(H^j_f)^{-1}$ are piecewise linear homeomorphisms, it follows that
\begin{equation}\label{iloczwyzn}
(1-\hat{\ep})^{n}\le\det(D(H^{n}_f\circ\ldots\circ H^1_f)^{-1})\le(1+\hat{\ep})^{n}\text{ almost everywhere}
\end{equation}
and the above inequalities do not depend on $f$.

We now show that
\begin{equation}\label{defH}
H_f(x):=\lim_{n\to\infty}H_f^n\circ\ldots\circ H_f^1(x)
\end{equation}
yields a homeomorphism $H_f:V\to V$.
First note that
\begin{equation}\label{niezmien}
H_f^m(V_i^n)=V_i^n\text{ for }i=1,\ldots,2^n \text{ and }m>n.
\end{equation}
Moreover, by \eqref{diampodst} we have 
\begin{equation}\label{gransred}
\max_{i=1,\ldots,2^n}\operatorname{diam}(V_i^n)\to 0\text{ for }n\to\infty.
\end{equation}
This implies that $\{H_f^m\circ\ldots\circ H_f^1\}_{n\in\n}$ is a Cauchy sequence. Indeed, for any $\epsilon>0$ by $\eqref{gransred}$ we can choose $N\in\n$ such that $ \max_{i=1,\ldots,2^N}\operatorname{diam}(V_i^N)<\epsilon$. Moreover, by \eqref{niezmien},  for all $m,n\ge N$ we have
\[
H_f^n\circ\ldots\circ H_f^1(x)\in V_i^N\Longleftrightarrow H_f^m\circ\ldots\circ H_f^1(x)\in V_i^N.
\]
Hence $\|H_f^n\circ\ldots\circ H_f^1(x)-H_f^m\circ\ldots\circ H_f^1(x)\|<\epsilon$ for all $x\in V$. It follows that the map $H_f:V\to V$ given by \eqref{defH} is well defined and the convergence in \eqref{defH} is uniform. This implies that $H_f$ is continuous. Since $H_f^n|_{\partial V}=Id|_{\partial V}$ for all $n\in\n$, we 
also have $H_f|_{\partial V}=Id|_{\partial V}$.

Set $W_i^n:=(H_f^n\circ\ldots\circ H_f^1)^{-1}(V^n_i)$. In view of \eqref{niezmien},
\begin{equation}\label{eq:relVW}
W_i^n=(H_f^m\circ\ldots\circ H_f^1)^{-1}(V^n_i)\text{ for }m>n.
\end{equation}
Therefore,
\begin{equation}\label{eq:rownW}
(H_f^n\circ\ldots\circ H_f^1)^{-1}(x)\in W_i^N\Longleftrightarrow (H_f^m\circ\ldots\circ H_f^1)^{-1}(x)\in W_i^N\text{ if }m,n\geq N.
\end{equation}
By (iv) in Lemma~\ref{trojkat0}, ${(H_f^n)}^{-1}$ is a Lipschitz automorphism with a Lipschitz constant $\frac{5}{4}$. Thus, by \eqref{diampodst}, we have
\begin{equation*}
\text{diam}(W_i^n)<\text{diam}(V_i^n)\Big(\frac{5}{4}\Big)^n=2\Big(\frac{5}{4\sqrt{2}}\Big)^{n},
\end{equation*}
so
\begin{equation}\label{diamobr}
\max_{i=1,\ldots,2^n}\operatorname{diam}(W_i^n)\to 0\text{ for }n\to\infty.
\end{equation}
Using \eqref{diamobr} and \eqref{eq:rownW} and  repeating the same arguments as for $H_f$ by replacing $V_i^n$ with $W_i^n$, we obtain that the  map
$G_f:V\to V$ given by
\[
G_f(x):=\lim_{n\to\infty}(H_f^n\circ\ldots\circ H_f^1)^{-1}(x)
\]
is well defined and continuous. We  now show that $H_f\circ G_f=Id$ and $G_f\circ H_f=Id$. First note that in view of \eqref{eq:relVW} and the compactness of $V^n_i$ and $W^n_i$
we have $H_f(W_i^n)=V_i^n$ and $G_f(V_i^n)=W_i^n$. Hence $H_f\circ G_f(V_i^n)=V_i^n$ and $G_f\circ H_f(W_i^n)=W_i^n$.
Let $\epsilon>0$ and $N\in\n$ be such that
\[
\max_{i=1,\ldots,2^N}\operatorname{diam}(V_i^N)<\epsilon\text{ and }\max_{i=1,\ldots,2^N}\operatorname{diam}(W_i^N)<\epsilon.
\]
This implies that
\[
\|H_f(G_f(x))-x\|<\epsilon\text{ and }\|G_f(H_f(x))-x\|<\epsilon\text{ for every }x\in V.
\]
Since $\epsilon$ was arbitrary, this shows that $H_f\circ G_f=Id$ and $G_f\circ H_f=Id$. Thus  $H_f$ is a homeomorphism.

Note that the family of sets $\{V_i^n;n\in\n,i=1,\ldots,2^n\}$ generates the Borel $\sigma$-algebra on V.
Since $(H_f)^{-1}(V_i^n)=W_i^n=(H_f^n\circ\ldots\circ H_f^1)^{-1}(V^n_i)$, by \eqref{zalind}, we have
\[(H_f)_*\mu_f(V_i^n)=(H_f^n\circ\ldots\circ H_f^1)_*\mu_f(V^n_i)=\operatorname{Leb}(V_i^n).\]
It follows that $(H_f)_*\mu_f=Leb$.
%

In the reminder of the proof we will show that $H_f$ depends continuously on $f$.
Fix $\epsilon>0$ and then choose $m\in\n$ such that $2(5/4\sqrt{2})^m<\epsilon/3$.
Then
\[\max_{i=1,\ldots,2^m}\operatorname{diam}(V_i^m)<\frac{\epsilon}{3}\text{ and }\max_{i=1,\ldots,2^m}\operatorname{diam}(W_i^m)<\frac{\epsilon}{3}.\]
Since $(H_f)^{-1}(V_i^m)=W_i^m=(H_f^m\circ\ldots\circ H_f^1)^{-1}(V^m_i)$, it follows that
\[\sup_{x\in V}\|H_f(x)-H_f^{m}\circ\ldots\circ H_f^1(x)\|<\frac{\epsilon}{3}\text{ and }\sup_{x\in V}\|(H_f)^{-1}(x)-(H_f^{m}\circ\ldots\circ H_f^1)^{-1}(x)\|<\frac{\epsilon}{3}.\]
Therefore
\begin{equation}\label{ciag1}
d_{Hom}(H_f,H_f^{m}\circ\ldots\circ H_f^1)<\frac{\epsilon}{3}\text{ for every }f\in \mathcal W(V,\tfrac{1}{1+\hat\ep},\tfrac{1}{1-\hat\ep}).
\end{equation}
By (v) in Lemma \ref{trojkat0}, the maps $f\mapsto H_f^1\in Hom(V)$ and $f\mapsto\det D(H_f^1)^{-1}\in L^\infty(V)$ are continuous.
Suppose now that for 
$k\ge 1$ we proved that
\begin{equation}\label{eq:ind}
f\mapsto H_f^k\circ\ldots\circ H_f^1\in Hom(V)\ \text{ and }\ f\mapsto \det D(H_f^{k}\circ\ldots\circ H_f^1)^{-1}\in L^\infty(V)
\end{equation}
are continuous. We now prove that
\[f\mapsto H_f^{k+1}\circ\ldots\circ H_f^1\ \text{ and }\ f\mapsto \det D(H_f^{k+1}\circ\ldots\circ H_f^1)^{-1}\]
are also continuous. Since for every $i=1,\ldots,k$, $H_f^i$ and $(H_f^i)^{-1}$ are Lipschitz homeomorphisms with constant $\frac{5}{4}$, we get
\begin{equation}\label{eq:lip}
H_f^k\circ\ldots H_f^1\text{ and $(H_f^k\circ\ldots H_f^1)^{-1}$ are Lipschitz with constant }(\tfrac{5}{4})^k.
\end{equation}
Moreover by \eqref{iloczwyzn} we have
\[f_k=\det D(H_f^k\circ\ldots\circ H_f^1)^{-1}\cdot \Big(f\circ(H_f^k\circ\ldots\circ H_f^1)^{-1}\Big)\in\mathcal W(V,\tfrac{(1-\hat\ep)^k}{1+\hat{\ep}},\tfrac{(1+\hat\ep)^k}{1-\hat{\ep}}).\]
In view of Lemma \ref{trojkat0}, $H_f^{k+1}$ depends continuously on $f_k$. By \eqref{eq:ind} and \eqref{eq:lip}, Lemma \ref{ciagsklad} implies that $f\mapsto f\circ(H_{f}^k\circ\ldots\circ H_{f}^1)^{-1}\in L^1(V)$ is continuous.
Together with \eqref{eq:ind} this gives the continuity of
\[f\mapsto f_k =f\circ(H_{f}^k\circ\ldots\circ H_{f}^1)^{-1}\cdot\det D(H_f^k\circ\ldots\circ H_f^1)^{-1}\in L^1(V).
\]
It follows that $H_f^{k+1}$ depends continuously on $f$.

Again, since $H_f^i$ and $(H_f^i)^{-1}$ are Lipschitz with constant $\frac{5}{4}$,  for any $x\in V$ and $f,g\in \mathcal W(V,\frac{1}{1+\hat\ep},\frac{1}{1-\hat\ep})$  we have
\[
\begin{split}
\|H^{k+1}_f&(H_{f}^k\circ\ldots\circ H_{f}^1(x))-H^{k+1}_{g}(H_{g}^k\circ\ldots\circ H_{g}^1(x))\|\\
&\le\|H^{k+1}_f(H_{f}^k\circ\ldots\circ H_{f}^1(x))-H^{k+1}_{g}(H_{f}^k\circ\ldots\circ H_{f}^1(x))\|\\
&\quad +\|H^{k+1}_{g}(H_{f}^k\circ\ldots\circ H_{f}^1(x))-H^{k+1}_{g}(H_{g}^k\circ\ldots\circ H_{g}^1(x))\|\\
&\le d_{Hom}(H^{k+1}_f,H^{k+1}_{g})+\frac{5}{4}d_{Hom}(H_{f}^k\circ\ldots\circ H_{f}^1,H_{g}^k\circ\ldots\circ H_{g}^1)
\end{split}
\]
and similarly
\[
\begin{split}
\|(H_{f}^k&\circ\ldots\circ H_{f}^1)^{-1}(H^{k+1}_f)^{-1}(x)-(H_{g}^k\circ\ldots\circ H_{g}^1)^{-1}(H^{k+1}_{g})^{-1}(x)\|\\
&\le \Big(\frac{5}{4}\Big)^kd_{Hom}(H^{k+1}_f,H^{k+1}_{g})+d_{Hom}(H_{f}^k\circ\ldots\circ H_{f}^1,(H_{g}^k\circ\ldots\circ H_{g}^1)).
\end{split}
\]
This proves the continuous dependence of $H^{k+1}_f\circ\ldots\circ H^1_f$ on $f$. Finally, since $H_f^i$ are piecewise linear homeomorphisms, we have 
\[
D(H_f^{k+1}\circ\ldots\circ H_f^1)^{-1}=D(H_f^{k+1})^{-1}D(H_f^k\circ\ldots\circ H_f^1)^{-1}\text{ almost everywhere}.
\]
By (v) in Lemma \ref{trojkat0}, $f_k\mapsto \det D(H_f^{k+1})^{-1}\in L^\infty(V)$ depends continuously on $f_k$. Since $f\mapsto f_k\in L^1(V)$ is continuous,  it follows that $f\mapsto \det D(H_f^{k+1})^{-1}\in L^\infty(V)$ is also continuous. The uniform boundaries in \eqref{jedenwyzn} and in \eqref{iloczwyzn}, together with \eqref{eq:ind}, yield the continuity of $f\mapsto \det D(H_f^{k+1}\circ\ldots\circ H_f^1)^{-1}\in L^\infty(V)$.

Fix any $f\in \mathcal W(V,\frac{1}{1+\hat\ep},\frac{1}{1-\hat\ep})$. Then there exists $\delta>0$ such that for any $g\in \mathcal W(V,\frac{1}{1+\hat\ep},\frac{1}{1-\hat\ep})$ with $\|f-g\|_{L^1}<\delta$ we have
\begin{equation*}
d_{Hom}(H_f^{m}\circ\ldots\circ H_f^1,H_{g}^{m}\circ\ldots\circ H_{g}^1)<\frac{\epsilon}{3}.
\end{equation*}
Combining this with \eqref{ciag1} we obtain
\[
d_{Hom}(H_f,H_{g})<\epsilon,
\]
which concludes the proof of 
the continuity of $f\mapsto H_f$.
\end{proof}
\begin{uw}
\label{rem57}
Note that the 
above lemma is 
also valid for any triangle, since every two triangles are conjugated by an affine map (although 
the restriction on $\hat\ep$ may vary).
\end{uw}

The following theorem is the main result of this section.
\begin{tw}\label{glow}
Let $(M,\Sigma,\zeta)$ be a translation surface. There exists $\ep_\zeta=\ep>0$ such that, for all
$f\in\mathcal W(M,\tfrac{1}{1+\ep},\tfrac{1}{1-\ep})$,
there exists a homeomorphism $\mathcal{H}_f:(M,\Sigma)\to(M,\Sigma)$ such that $(\mathcal{H}_f)_*\mu_f=\la_\zeta$, where $\mu_f:=f \la_\zeta$. Moreover, $\mathcal{H}_f$ depends continuously on $f$.
\end{tw}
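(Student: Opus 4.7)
The plan is to reduce the problem to Lemma~\ref{trojkat} (together with Remark~\ref{rem57}) via a finite triangulation of $M$ combined with a preliminary mass-balancing homeomorphism. First I would choose a triangulation $\mathcal{T} = \{T_1, \ldots, T_N\}$ of $(M, \Sigma)$ whose vertex set contains $\Sigma$, so that any homeomorphism which fixes all vertices automatically sends $\Sigma$ to itself. Such a triangulation exists because any translation surface admits a polygonal representation that can be refined to a simplicial one, and we may take the triangles to be arbitrarily small and affinely equivalent to an isosceles right triangle.

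The main step is then to construct a homeomorphism $\mathcal{G}_f : M \to M$ that fixes every vertex of $\mathcal{T}$ and such that $\mu_f(\mathcal{G}_f^{-1}(T_i)) = \la_\zeta(T_i)$ for every $i$. To do this I would pick a spanning tree $\tau$ of the dual graph of $\mathcal{T}$, root it arbitrarily, and process the non-root triangles in a leaf-to-root order. For each leaf $T_i$ of the current subtree, with parent $T_{\pi(i)}$ sharing an edge $e_i$, I would construct, in complete analogy with Lemma~\ref{trojkat0}, a piecewise affine homeomorphism of $T_i \cup T_{\pi(i)}$ that fixes $\partial(T_i \cup T_{\pi(i)})$ and the endpoints of $e_i$, and moves the interior of $e_i$ so as to transfer exactly the right amount of the current mass between the two triangles. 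Because the outer boundary of $T_i \cup T_{\pi(i)}$ is fixed, these local deformations do not affect the edges shared with the remaining triangles, so their composition is a well-defined homeomorphism $\mathcal{G}_f$. Since $\mu_f(M) = \la_\zeta(M)$, the root triangle is automatically balanced after the $N-1$ operations.

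Provided $\ep_\zeta$ is small enough that after these $N-1$ balancing steps the density of $(\mathcal{G}_f)_*\mu_f$ on each $T_i$ still lies in the hypothesis window of Lemma~\ref{trojkat}, I can apply that lemma (via Remark~\ref{rem57}) to each $T_i$ to obtain a homeomorphism $H_i : T_i \to T_i$ with $H_i|_{\partial T_i} = \mathrm{Id}$ and $(H_i)_*((\mathcal{G}_f)_*\mu_f|_{T_i}) = \la_\zeta|_{T_i}$. The identity boundary condition lets these pieces be glued into a homeomorphism $H : M \to M$, and I set $\mathcal{H}_f := H \circ \mathcal{G}_f$. Continuity in $f$ would follow as in the proofs of Lemmas~\ref{trojkat0} and~\ref{trojkat}: the scalar parameter controlling each balancing step depends continuously on the current density on the two adjacent triangles; Lemma~\ref{ciagsklad} together with Remark~\ref{ciagsklad1} propagates this continuity through the inductive compositions; and Lemma~\ref{trojkat} itself yields continuity of the triangle-by-triangle final step.

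The main obstacle I foresee is the iterative loss of control over the density bounds: each of the $N-1$ mass-balancing operations multiplies the $L^\infty$ bounds on the relevant density by a factor close to $1$, so I must choose $\ep_\zeta$ small enough (depending on $N$ and on the geometry of the triangulation, hence on $\zeta$) to guarantee that after the whole process the density on every $T_i$ still satisfies the hypotheses of Lemma~\ref{trojkat}. Writing down the explicit construction of the edge-deforming homeomorphism on $T_i \cup T_{\pi(i)}$ as a direct analogue of $H(h,\epsilon)$ from Section~\ref{sec:meas}, and verifying the Lipschitz and $L^1$-continuity estimates at each stage, is the main technical content.
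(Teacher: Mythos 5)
Your overall strategy coincides with the paper's: triangulate $M$ with vertices containing $\Sigma$, first adjust the measure so that each triangle has the correct total $\la_\zeta$-mass, then apply Lemma~\ref{trojkat} (via Remark~\ref{rem57}) triangle by triangle and glue along the fixed boundaries. The difference lies entirely in how the preliminary mass-balancing homeomorphism is organized, and this is where the paper's construction sidesteps the very obstacle you flag at the end. The paper orders the triangles $U_0,\dots,U_m$ so that each $U_i$ ($i\geq 1$) shares an edge with some earlier $U_{k(i)}$, then places a small isosceles right triangle $W_i\subset U_i\cup U_{k(i)}$ straddling that shared edge, with the $W_i$ pairwise \emph{disjoint} and all of identical size. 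The amounts of mass to be transferred across the $m$ edges are determined all at once as the solution $v^f$ of the linear system $Bv^f=[\la_\zeta(U_i)-\mu_f(U_i)]_i$, where $B$ encodes the spanning-tree adjacency, and the map $\mathcal G_f$ is then a single homeomorphism equal to the elementary $H(h^i_f,\sqrt{\hat\ep})$-type map on each $W_i$ and the identity elsewhere. Because the supports are disjoint, the Jacobian of $\mathcal G_f^{-1}$ at any point is distorted by at most one factor $1\pm\hat\ep/3$, independently of the number of triangles, so the density bound needed to invoke Lemma~\ref{trojkat} on each $U_i$ follows at once. In your leaf-to-root scheme the balancing maps are supported on the full unions $T_i\cup T_{\pi(i)}$, so maps associated to a parent and to its several children overlap; their composition forces you to track the product of Jacobians and to shrink $\ep_\zeta$ accordingly, which is exactly the loose end you identify. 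In addition, constructing a boundary-fixing piecewise affine balancing map on an arbitrary pair $T_i\cup T_{\pi(i)}$ is geometrically less routine than on a fixed isosceles right triangle, whereas the paper keeps every elementary deformation literally equal to an $H(h,\epsilon)$ from Section~\ref{sec:meas}. So your route is workable in principle, but the disjoint-$W_i$ plus linear-system device in the paper turns your open accumulation estimate and generalized-shape construction into non-issues, and is the part of the argument you would still have to supply.
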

\begin{proof}
On $(M,\Sigma)$ consider a triangulation of $m+1$ triangles such that the set of vertices of this triangulation contains the set $\Sigma$.
By connectedness,  there is an ordering $\{U_i:0\leq i\leq m\}$ of the triangles such that, for every $i=1,\ldots,m$, the triangle $U_i$ has a common edge with some $U_{k(i)}$ for $0\le k(i)<i$. Indeed, choose any triangle $U_0$ from the triangulation. Next choose any neighbouring triangle as $U_1$ and set $k(1)=0$. Now suppose that for some $1\leq \ell\le m$ we have chosen triangles $\{U_i:0\le i\le \ell\}$  such that $k(i)<i$ for $1\leq i\leq \ell$. If $\ell=m$ then the process is over. If $\ell< m$ then choose as $U_{\ell+1}$ any triangle that has 
a common boundary with $\bigcup_{i=0}^{\ell}U_i$. This triangle exists by connectedness. Finally, let $0\le k(\ell+1)\le l$ be such that $U_{\ell+1}$ has 
a common edge with $U_{k(\ell+1)}$.

For every $i=1,\ldots,m$ consider a small isosceles right triangle $W_i\subset U_i\cup U_{k(i)}$ such that its shortest height lies on the common edge of $U_i$ and $U_{k(i)}$ and $W_i\cap W_j=\emptyset$ whenever $i\neq j$. Furthermore, we assume that each of the triangles $W_i$ is of the same size and we choose a parametrization such that $W_i$ has vertices in points $(0,-a),(0,a)$ and $(a,0)$,
with $(0,-a)\in U_i$ for each $i=1,\ldots,m$ (see Fig.~2).

	\begin{figure}[h]
 \begin{center}
		\includegraphics[scale=0.5]{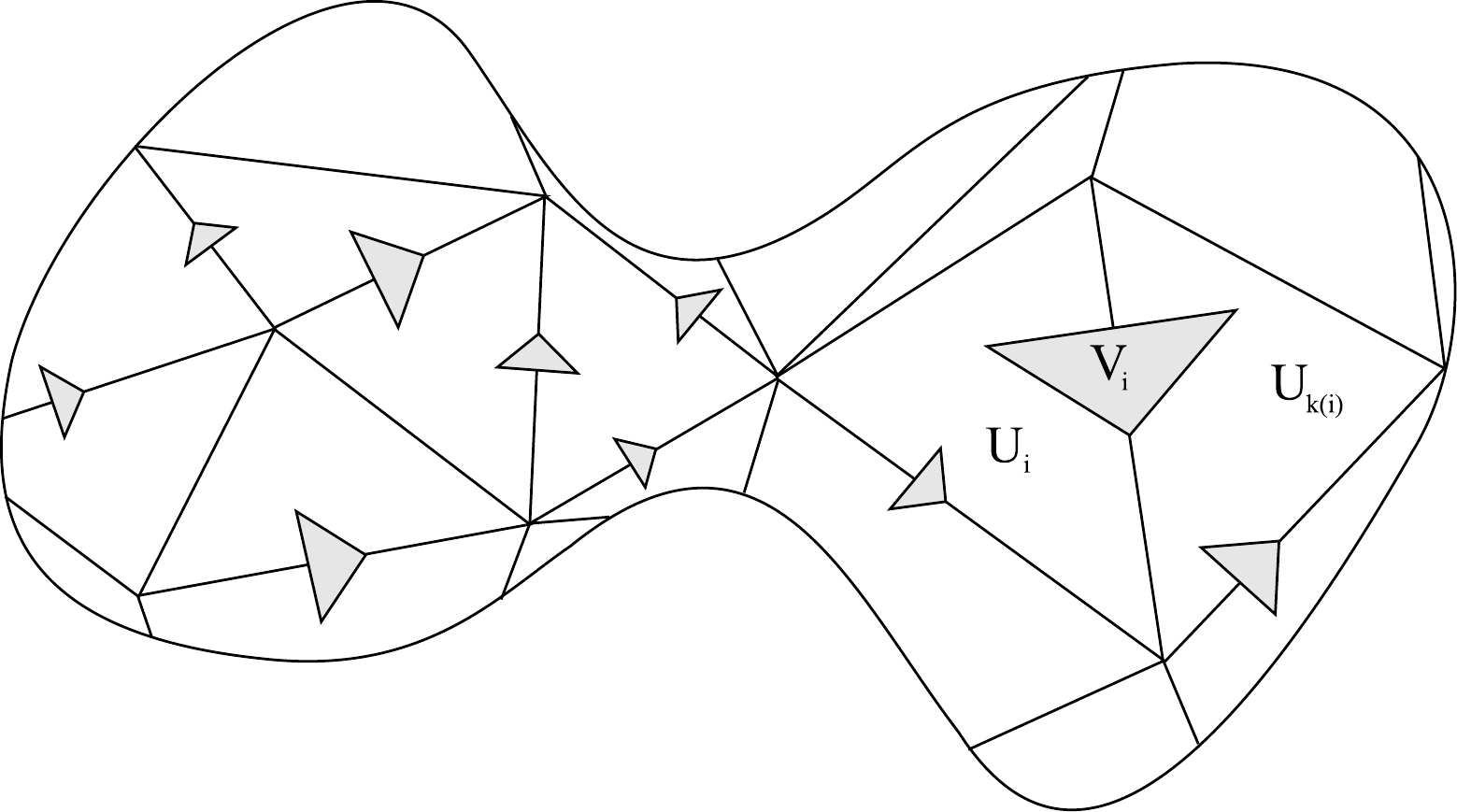}
		\caption{A triangulation of the surface $M$ and a choice of small triangles.}
 \end{center}
	\end{figure}

Let $B=[b_{ij}]_{i,j\in\{1,\ldots,m\}}$ be 
the matrix given by
\[
b_{ij}=\begin{cases}
1&\text{ if }i=j;\\
-1&\text{ if }i=k(j);\\
0&\text{ otherwise.}
\end{cases}
\]
For each positive 
$f\in\mathcal W(M,\tfrac{1}{1+\ep},\tfrac{1}{1-\ep})$, denote by $v^f\in\re^m$ the solution of the following system of linear  equations
\begin{equation}\label{uklrow}
Bv^f=[\la_\zeta(U_i)-\mu_f(U_i)]_{i=1,\ldots m}.
\end{equation}
Then $f\mapsto v^f$ is continuous.
Let $\hat\ep>0$ be such that Lemma \ref{trojkat} can be applied to any triangle $U_i$ (it exists due to Remark \ref{rem57}).
Observe that if $f$ is constant equal to $1$, then $v^f=(0,\ldots,0)$. By the continuity of $f\mapsto v^f$ we can  choose
$
0<\ep<\frac{\hat{\ep}}{3}
$
such that
\begin{equation}\label{warnatrojk2}
|v_i^f|<\frac{a^2\hat{\ep}}{12}
\end{equation}
for every 
$f\in\mathcal W(M,\tfrac{1}{1+\ep},\tfrac{1}{1-\ep})$.

Let 
$f\in\mathcal W(M,\tfrac{1}{1+\ep},\tfrac{1}{1-\ep})$. We now construct a 
family of piecewise affine homeomorphisms $G_f^i:W_i\to W_i$, 
$i=1,\ldots,m$, such that
 $G_f^i$ depends continuously on $f$,  $G^i_f|_{\partial W_i}=Id$,
\begin{equation}\label{linpart}
1-\frac{\hat{\ep}}{3}<\det(D(G_f^i)^{-1}(x))<1+\frac{\hat{\ep}}{3}\text{ whenever }D(G_f^i)^{-1}(x)\text{ is well defined,}
\end{equation}
and finally we require that 
the homeomorphism $\mathcal G_f:M\to M$ defined 
by
\[
\mathcal G_f(x):=
\begin{cases}
G_f^i(x)&\text{ if }x\in W_i\text{ for some }i=1,\ldots,m,\\
x&\text{ otherwise,}
\end{cases}
\]
satisfies
\begin{equation}\label{zlozmiara}
(\mathcal G_f)_*\mu_f(U_j)=\la_\zeta(U_j)\text{ for all }0\leq j\leq m.
\end{equation}

First note that for each $i=1,\ldots,m$ we can choose $-1<h_f^i<1$ such that the quadrilateral $Q_f^i\subset W_i$ with vertices in points $(0,0)(0,-a),(a,0)$ and $y^i_f:=(\sqrt{\hat{\ep}}a(1-|h_f^i|),h_f^ia)$ has measure $\mu_f$ equal to $\mu_f(W_i\cap U_i)+v_i^f$. Indeed,
 \[
 \mu_f(W_i\cap U_i)\ge\frac{1}{1+\ep}\la_\zeta(W_i\cap U_i)=\frac{a^2}{2(1+\ep)}\ge\frac{a^2}{4}>|v_i^f|
 \]
 and analogously
 \[
 \mu_f(W_i\cap U_{k(i)})>|v_i^f|.
 \]
Thus
\[
0<\mu_f(W_i\cap U_i)+v_i^f<\mu_f(W_i),
\]
which, together with the absolute continuity of $\mu_f$, yields the existence of $h_f^i$ for $i=1,\ldots,m$.

We now estimate $|h_f^i|$. Since $|v_i^f|$ is the $\mu_f$ measure  of the triangle with vertices $(0,0),(a,0)$ and $y_f^i$ in $W_i$ and $a|h_f^i|$ is its height, we have
\[
|v_i^f|>\frac{1}{1+\ep}\frac{a^2|h_f^i|}{2}.
\]
Hence, by \eqref{warnatrojk2},
\begin{equation}\label{hoszac}
|h_f^i|<\frac{2(1+\ep)|v_i^f|}{a^2}<\frac{(1+\ep)\hat{\ep}}{6}<\frac{\hat\ep}{3}.
\end{equation}

Let $G_f^i:W_i\to W_i$ be given by $G_f^i:=H(h_f^i,\sqrt{\hat{\ep}})$ for $i=1,\ldots,m$ as in \eqref{Hdef}.
Since \[(G_f^i)^{-1}(W_i\cap U_i)=Q_f^i,\] we have
\[
(\mathcal G_f)_*\mu_f(W_i\cap U_i)=\mu_f(Q_f^i)=\mu_f(W_i\cap U_i)+v_i.
\]
Analogously
\[
(\mathcal G_f)_*\mu_f(W_i\cap U_{k(i)})=\mu_f(W_i\setminus Q_f^i)=\mu_f(W_i)-\big(\mu_f(W_i\cap U_i)+v_i\big)=\mu_f(W_i\cap U_{k(i)})-v_i.
\]
By the definition of $v_i^f$ and by the fact that $W_i$ and $W_j$ are disjoint for $i\neq j$, we have 
for $i=1,\ldots,m$
\[
\begin{split}
(\mathcal G_f&)_*(\mu_f)(U_i)=
(\mathcal G_f)_*(\mu_f)\left(U_i\setminus (W_i\cup\bigcup_{\{j;k(j)=i\}}W_j)\right)\\
&+(\mathcal G_f)_*(\mu_f)(W_i\cap U_i)+\sum_{\{j;k(j)=i\}}(\mathcal G_f)_*(\mu_f)(W_{j}\cap U_i)\\
&=\mu_f\left(U_i\setminus (W_i\cup\bigcup_{\{j;k(j)=i\}}W_j)\right)+\big(\mu_f(W_i\cap U_i)+v^f_i\big)+\sum_{\{j;k(j)=i\}}\big(\mu_f(W_{j}\cap U_i)-v^f_{j}\big)\\
&=\mu_f(U_i)+v_i^f-\sum_{\{j;k(j)=i\}}v_j^f=\mu_f(U_i)+(Bv^f)_i=\la_\zeta(U_i).
\end{split}.
\]
Since $(G_f^i)_*(\mu_f)(W_i)=\mu_f(W_i)$, we also have
\[
(\mathcal G_f)_*(\mu_f)(M)=\mu_f(M)=\la_\zeta(M),
\]
and thus
\[
(\mathcal G_f)_*(\mu_f)(U_0)=\la_\zeta(U_0).
\]
Hence \eqref{zlozmiara} is satisfied. Moreover \eqref{hoszac} together with \eqref{proporcje2} and \eqref{proporcje3} yield \eqref{linpart}. What is left to prove is that $G_f^i$ depends continuously on $f$. By Lemma \ref{Hodh}, we only need to prove that $h_f^i$ depends continuously on $f$.

Take 
$f\in\mathcal W(M,\tfrac{1}{1+\ep},\tfrac{1}{1-\ep})$, and let $\de>0$. Since $v^g$ depends continuously on $g$, there exists $0<\delta'\leq\delta$ such that, 
for any $g\in\mathcal W(M,\tfrac{1}{1+\ep},\tfrac{1}{1-\ep})$,
\[
\|f-g\|_{L^1}<\delta'\Longrightarrow \max_{i=1,\ldots,m}|v_i^f-v_i^g|<\de.
\]
We now evaluate the difference 
between the respective Lebesgue measures of the quadrilaterals given by  $(0,0)$, $(0,-a)$, $(a,0)$, $y_f^i$ and 
by $(0,0)$, $(0,-a)$, $(a,0)$, $y_g^i$:
\[
\begin{split}
\frac{a^2|h_f^i-h_g^i|}{2}<&(1+\ep)|\mu_f(W_i\cap U_i)+v_i^f-\mu_g(W_i\cap U_i)-v_i^g+\int_{W_i}|f-g|d\la_\zeta|\\&\le(1+\ep)\big(|\mu_f(W_i\cap U_i)-\mu_g(W_i\cap U_i)|+|v_i^f-v_i^g|+\int_{W_i}|f-g|d\la_\zeta\big)\\&<(1+\ep)\big(\int_{W_i\cap U_i}|f-g|d\la_\zeta+\de+\int_{W_i}|f-g|d\la_\zeta|)\le(1+\ep)(2\|f-g\|_{L^1}+\de).
	\end{split}
	\]
Hence, for every 
$g\in\mathcal W(M,\tfrac{1}{1+\ep},\tfrac{1}{1-\ep})$ such that  $\|f-g\|_{L^1}<\delta'$, we have
\[
|h_f^i-h_g^i|<\frac{6(1+\ep)\de}{a^2},
\]
which implies that $h_f^i$ depends continuously on $f$.

Since $G_f^i$ 
depends continuously on $f$, by the definition $\mathcal G_f$ also depends continuously on $f$.
Since 
$f\in\mathcal W(M,\tfrac{1}{1+\ep},\tfrac{1}{1-\ep})$, and $\ep<\frac{\hat{\ep}}{3}$, 
by \eqref{linpart}, $(\mathcal G_f)_*\mu_f$ is an absolutely continuous measure with density $\hat f=(f\circ\mathcal G_f^{-1})\cdot \det D(\mathcal G_f^{-1})$ satisfying
\[
\frac{1}{1+\hat{\ep}}<\frac{1-\frac{\hat{\ep}}{3}}{1+\frac{\hat{\ep}}{3}}<\hat f<\frac{1+\frac{\hat{\ep}}{3}}{1-\frac{\hat{\ep}}{3}}<\frac{1}{1-\hat{\ep}}\] and
\[\int_{U_i}\hat fd\la_\zeta=(\mathcal G_f)_*\mu_f(U_i)=\la_\zeta(U_i)\text{ for every }i=0,\ldots,m.
\]
Therefore, on each $U_i$ the density $\hat{f}$ satisfies the assumptions of Lemma \ref{trojkat}.
Hence, for each $i=0,\ldots,m$, there exists a homeomorphism $H^i_{\hat f}:U_i\to U_i$ which transports 
the measure $(\mathcal G_f)_*\mu_{f}|_{U_i}$ to $\la_\zeta|_{U_i}$,
and 
such that $H^i_{\hat{f}}|_{\partial U_i}=Id$. Therefore, we can define a homeomorphism $H_{\hat f}:M\to M$  such that
\[
H_{\hat f}(x):=
H^i_{\hat f}(x)  \text{ whenever }x\in U_i.
\]
Then  $(H_{\hat f}\circ \mathcal G_f)_*\mu_f=( H_{\hat f})_*( (\mathcal G_f)_*\mu_f)=\la_\zeta$. Let $\mathcal H_f:=H_{\hat f}\circ \mathcal G_f$. What is left to prove is that $f\mapsto\mathcal H_f$ is continuous.

By Lemma \ref{trojkat} $H_{\hat f}^i$ depend continuously on $\hat f$ and hence $\hat f\mapsto H_{\hat f}$ is continuous. Moreover, (v) in Lemma \ref{trojkat0} implies that $f\mapsto\det(D\mathcal G_f^{-1})\in L^\infty(M)$ is continuous.
Furthermore, by (iv) in Lemma \ref{trojkat0}, the homeomorphism $\mathcal G_f^{-1}:M\to M$ is Lipschitz with constant $\frac{5}{4}$. Thus, by Lemma~\ref{ciagsklad} and Remark~\ref{ciagsklad1}, $f\mapsto f\circ\mathcal G_f^{-1}$ is continuous. Hence
\[
\mathcal W(M,\tfrac{1}{1+\ep},\tfrac{1}{1-\ep})\ni f\mapsto\hat f=(f\circ\mathcal G_f^{-1})\cdot \det D(\mathcal G_f^{-1})\in L^1(M)
\]
is continuous and 
this implies the continuity of $f\mapsto H_{\hat{f}}$.

 Now consider any 
 $f\in\mathcal W(M,\tfrac{1}{1+\ep},\tfrac{1}{1-\ep})$. Since $ H_{\hat{f}}:M\to M$  is uniformly continuous, for any 
 $\eta>0$ we can find $0<\de$ such that
\[
d_M(x,y)<\de\Rightarrow d_M( H_{\hat f}(x), H_{\hat f}(y))<\eta.
\]
Then, for every $x\in M$ and any 
$g\in\mathcal W(M,\tfrac{1}{1+\ep},\tfrac{1}{1-\ep})$ such that $d_{\operatorname{Hom}}( H_{\hat f}, H_{\hat g})<
\eta$ and
$d_{\operatorname{Hom}}(\mathcal G_f,\mathcal G_g)<\de$, we have
\[
\begin{split}
d_M(H_{\hat f}\circ \mathcal G_f(x), H_{\hat g}\circ \mathcal G_g(x))\le\ &
d_M(H_{\hat f}\circ \mathcal G_f(x), H_{\hat 
f}\circ \mathcal G_g(x))\\&+
d_M(H_{\hat f}\circ \mathcal G_g(x),H_{\hat g}\circ \mathcal G_g(x))<2
\eta.
\end{split}
\]
Analogously,
\[
d_M(( H_{\hat f}\circ \mathcal G_f)^{-1}(x),( H_{\hat g}\circ \mathcal G_g)^{-1}(x))<2
\eta.
\]
This concludes the proof of the continuity of $f\mapsto  \mathcal H_f$ and the proof of the whole theorem.

\end{proof}

\section{Local continuous embedding of the moduli space}\label{sec:emb}

In this section, we finalize the construction of a continuous mappings on open subsets of a connected component of any stratum, which is needed to prove the main result of this paper. We do it in two steps.

Firstly, for each $\zeta\in\mathcal M(M,\Sigma,\kappa)$ we construct 
a neighborhood $\mathcal U_{\zeta}\subset\mathcal M(M,\Sigma,\kappa)$ 
of $\zeta$, so that for every $\omega\in\mathcal U_\zeta$
there exists a piecewise affine homeomorphism $\mathfrak h_\omega:M\to M$ such that  $(\mathfrak h_\omega)_*\la_\omega=f_\omega\lambda_\zeta$  with $\frac{1}{1+\ep_\zeta}<f_\omega<\frac{1}{1-\ep_\zeta}$, where $\ep_\zeta>0$ is given by Theorem \ref{glow}. We will also require that $\omega\mapsto f_\omega\in L^1(M)$ is continuous.

Secondly, we use the results of the previous sections to show the existence of a homeomorphism $\mathcal H_\omega:M\to M$ such that $(\mathcal H_\omega\circ\mathfrak h_\omega)_*\la_\omega=\la_\zeta$. Moreover, we show that these homeomorphisms yield the existence of a continuous mapping $\mathfrak S:\mathcal U_\zeta\to \operatorname{Flow}(M,\zeta)$ such that $\mathfrak S(\omega)$ is isomorphic by a homeomorphism to $\mathcal T^\omega$ - the vertical translation flow on $(M,\omega)$.

\begin{lm}\label{techn}
	Let $\zeta\in\mathcal M(M,\Sigma,\kappa)$. There exists  a neigbourhood $\mathcal U_\zeta\subset M(M,\Sigma,\kappa)$ such that, for every $\omega\in \mathcal U_\zeta$, the following holds:
	\begin{enumerate}
		\item[(i)] there exists a triangulation $\mathcal Y(\omega)$ and a piecewise affine homeomorphism $\mathfrak h_\omega:(M,\omega)\to(M,\zeta)$ which is affine on elements of $\mathcal Y(\omega)$, fixes $\Sigma$ and 
		is Lipschitz with constant $\frac{11}{10}$,
		\item[(ii)] $(\mathfrak h_\omega)_*\la_\omega$ is an absolutely continuous measure with respect to $\la_\zeta$ with piecewise constant density $f_\omega$ satisfying $\frac{1}{1+\ep_\zeta}<f_\omega<\frac{1}{1-\ep_\zeta}$,
		\item[(iii)] the mapping $U_\zeta\ni\omega\mapsto f_\omega\in L^1(M,\la_\zeta)$ is continuous.
		\end{enumerate}
		Moreover, for given $\epsilon>0$, there exists $\de>0$ such that
		\begin{enumerate}
		\item[(iv)]
		for any $\bar\omega\in\mathcal U_\zeta$, if $d_{Mod}(\omega,\bar\omega)<\de$ then $\mathfrak h_\omega^{-1}\circ\mathfrak h_{\bar\omega}:(M,\bar{\omega})\to(M,\omega)$ is affine on elements of $\mathcal Y(\bar\omega)$, $\mathfrak h_{\bar{\omega}}^{-1}\circ\mathfrak h_{\omega}:(M,\omega)\to(M,\bar{\omega})$ is affine on elements of $\mathcal Y(\omega)$, they are both Lipschitz with constant $1+\epsilon$,
	\[
	\|Id-D(\mathfrak h_\omega^{-1}\circ\mathfrak h_{\bar\omega})|_A\|<\epsilon\text{ for every }A\in\mathcal Y(\bar\omega)
	\]
	and
		\[
		\|Id-D(\mathfrak h_{\bar\omega}^{-1}\circ\mathfrak h_{\omega})|_B\|<\epsilon\text{ for every }B\in\mathcal Y(\omega),
		\]
	\item[(v)] for any $\bar\omega\in\mathcal U_\zeta$ such that  $d_{Mod}(\omega,\bar\omega)<\de$ and for the set
	\begin{equation*}
	\tilde M(\omega):=\{x\in M;\inf_{\sigma\in\Sigma}d_\omega(\mathcal T_t^\omega(x),\sigma)>4\epsilon\text{ for all }t\in[-1,1]\},
	\end{equation*}
	we have $\la_\omega(\tilde M({\omega}))>1-K\epsilon$, where $K>0$ depends only on stratum, and for $x\in \tilde M({\omega})$ we have
	\[
	d_\omega(\mathcal T^\omega_t(x),\mathfrak h_\omega^{-1}\circ\mathfrak h_{\bar\omega}\circ\mathcal T^{\bar\omega}_t\circ\mathfrak h_{\bar\omega}^{-1}\circ\mathfrak h_{\omega}(x))<\epsilon\text{ for any }t\in[-1,1].
	\]
	and for every $\sigma\in \Sigma$
	\[
		d_\omega(\sigma,\mathfrak h_\omega^{-1}\circ\mathfrak h_{\bar\omega}\circ\mathcal T^{\bar\omega}_t\circ\mathfrak h_{\bar\omega}^{-1}\circ\mathfrak h_{\omega}(x))>3\epsilon\text{ for any }t\in[-1,1].
	\]
	\end{enumerate}
\end{lm}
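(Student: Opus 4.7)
The plan is to construct $\mathfrak h_\omega$ explicitly using the local polygonal coordinates near $\zeta$ described in Section~\ref{sec:prem}. By first composing with a small rotation $r_\te$ if needed (and noting that rotations depend continuously on $\te$ in every stratum, and commute with the whole construction up to a rigid motion), I may assume that $\zeta=M(\pi,\la_0,\tau_0)$ for some $(\pi,\la_0,\tau_0)\in\Theta_\pi$ with no vertical saddle connection, and that every $\omega$ in a small neighborhood $\mathcal U_\zeta$ is represented as $M(\pi,\la,\tau)$ with $(\la,\tau)$ depending continuously on $\omega$ and close to $(\la_0,\tau_0)$. The vertices of the polygons $P_\zeta,P_\omega\subset\mathbb C$ realizing $\zeta,\omega$ are affine functions of the polygonal parameters, and the identifications of boundary sides in both polygons are governed by the same permutation $\pi$.

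I then fix a triangulation $\mathcal Y_\zeta$ of $P_\zeta$ with vertex set containing all polygon vertices (so $\Sigma$ is contained in the vertex set and every boundary identification is edge-to-edge). Choosing each interior vertex to depend affinely on $(\la,\tau)$, the same combinatorial triangulation on the deformed polygon $P_\omega$ produces, for $(\la,\tau)$ sufficiently close to $(\la_0,\tau_0)$, a triangulation $\mathcal Y(\omega)$ of $P_\omega$. Define $\mathfrak h_\omega\colon P_\omega\to P_\zeta$ on each $A\in\mathcal Y(\omega)$ as the unique affine bijection onto the corresponding triangle of $\mathcal Y_\zeta$ sending labelled vertices to labelled vertices (Lemma~\ref{zlep}). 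Because both polygons share the combinatorics of their side identifications and the affine pieces respect the vertex correspondence, $\mathfrak h_\omega$ descends to a homeomorphism of $M$ fixing $\Sigma$; the push-forward of $\mathcal Y(\omega)$ is exactly $\mathcal Y_\zeta$, and the density $f_\omega=|\det D\mathfrak h_\omega^{-1}|$ is piecewise constant on $\mathcal Y_\zeta$.

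The linear part $D\mathfrak h_\omega|_A$ on each triangle $A$ is a rational function of $(\la,\tau)$ reducing to the identity at $(\la_0,\tau_0)$, so after shrinking $\mathcal U_\zeta$ I obtain $\|D\mathfrak h_\omega|_A\|,\|D\mathfrak h_\omega^{-1}|_A\|\le 11/10$ and $(1+\ep_\zeta)^{-1}<f_\omega<(1-\ep_\zeta)^{-1}$, giving (i) and (ii). Continuity of the linear parts in $(\la,\tau)$ yields continuity of $\omega\mapsto f_\omega$ in $L^\infty$, hence in $L^1$, which is (iii). For (iv), on each triangle $A\in\mathcal Y(\bar\omega)$ the composition $\mathfrak h_\omega^{-1}\circ\mathfrak h_{\bar\omega}$ is the product of two affine maps whose combined linear part converges to the identity as $\bar\omega\to\omega$; continuity in the parameters produces a $\de$ with the required bounds.

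The main obstacle is the flow estimate (v). I would first establish $\la_\omega(M\setminus\tilde M(\omega))\le K\ep$ by a tubular neighborhood argument: for each $\sigma\in\Sigma$, the set of $x\in M$ whose $\omega$-orbit meets the $4\ep$-ball around $\sigma$ within time $[-1,1]$ is contained in a union of vertical strips of width at most $8\ep$ and length at most $2$ emanating from $\sigma$ along each of the $2(\kappa_i+1)$ vertical separatrices at $\sigma$, so the total area is bounded by a constant $K=K(\Sigma,\kappa)$ times $\ep$. For $x\in\tilde M(\omega)$ and $t\in[-1,1]$, the orbit arc $\{\mathcal T_s^\omega(x):s\in[0,t]\}$ crosses only finitely many triangles of $\mathcal Y(\omega)$ and stays $4\ep$ away from $\Sigma$; by (iv) the point $y:=\mathfrak h_{\bar\omega}^{-1}\circ\mathfrak h_\omega(x)$ is within $\ep$ of $x$, the vertical direction is common to $\omega$ and $\bar\omega$, and at each time the piecewise-affine correction $\mathfrak h_\omega^{-1}\circ\mathfrak h_{\bar\omega}$ differs from the identity by at most $\ep$ on the triangle actually containing $y+t\cdot i$. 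Telescoping these near-identity errors yields the required $\ep$-closeness, and the $4\ep$-margin in the definition of $\tilde M(\omega)$ then guarantees $3\ep$-separation from $\Sigma$. The delicate point, which dictates how small $\de$ must be, is to certify that the triangle of $\mathcal Y(\bar\omega)$ containing $y+t\cdot i$ really is the one corresponding to the triangle of $\mathcal Y(\omega)$ containing $\mathcal T_t^\omega(x)$, so that the displacement is a genuine near-identity error rather than the drift along a topologically different piecewise-affine path; this is secured by choosing $\de$ small compared to $\ep$ and exploiting the Hausdorff closeness of corresponding triangles of $\mathcal Y(\omega)$ and $\mathcal Y(\bar\omega)$.
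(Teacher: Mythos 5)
Your construction for (i)--(iv) mirrors the paper's: represent $\zeta$ and nearby $\omega$ by polygons $M(\pi,\la,\tau)$, choose a triangulation depending affinely on the polygonal parameters, and let $\mathfrak h_\omega$ be the vertex-matching piecewise affine map. One cosmetic difference: the paper's triangulation has all vertices on the boundary of $\mathcal P(\omega)$ (the polygon corners and the vertical projections $Q_i, Q_i', S_i, S_i'$ of those corners), so many of its edges are vertical; this is exploited later. Your proposal allows interior vertices, which is fine for (i)--(iv) but loses that structure.

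For (v) your route differs from the paper's, and the ``delicate point'' you identify is not where the actual difficulty lies. The paper works entirely in $\omega$-coordinates: the conjugated flow $\mathfrak h_\omega^{-1}\circ\mathfrak h_{\bar\omega}\circ\mathcal T^{\bar\omega}_t\circ\mathfrak h_{\bar\omega}^{-1}\circ\mathfrak h_\omega$ is generated by the piecewise-constant vector field $D(\mathfrak h_\omega^{-1}\circ\mathfrak h_{\bar\omega})X^{\bar\omega}$, and by (iv) this is within $\epsilon$ of the constant vertical field $X^\omega$ on \emph{every} triangle; one then estimates $d_\omega(\mathcal T_t^\omega(x),\,\cdot\,)\le\int_0^{|t|}\epsilon\,ds\le\epsilon$ in a single step. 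There is no telescoping, and --- crucially --- no need to ``certify that the triangle of $\mathcal Y(\bar\omega)$ containing $y+ti$ corresponds to the one of $\mathcal Y(\omega)$ containing $\mathcal T_t^\omega(x)$'', because the bound in (iv) is uniform over all triangles, so which triangle the conjugated orbit happens to be in is irrelevant. The one subtlety the paper does address (and your sketch does not) is that the piecewise-constant vector field is a priori undefined on edges of $\mathcal Y(\omega)$; this is handled by observing that if an edge of $\mathcal Y(\bar\omega)$ is vertical --- the only case in which an orbit can travel along an edge --- the vector field takes the same value on both adjacent triangles, and this is guaranteed by the paper's choice of triangulation with vertical edges. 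If you keep arbitrary interior vertices and still want the integral-identity argument, you would need a substitute for that remark. Your alternative direct comparison ($y$ close to $x$, flow verticals close, $\mathfrak h_\omega^{-1}\circ\mathfrak h_{\bar\omega}$ $C^0$-close to identity) can also be made to work, but as written it passes back and forth between $\omega$- and $\bar\omega$-charts, which is messier and requires justifying that the vertical translates $y+ti$ and $x+ti$ stay close across the (combinatorially identical but metrically slightly different) boundary identifications; the paper's one-coordinate-system argument avoids this entirely.
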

\begin{proof}
	Let $\pi$ be a permutation of 
	the alphabet $\mathcal A$ 
	with $d$ elements which belongs to a Rauzy class corresponding to $\mathcal M(M,\Sigma,\kappa)$.
	We can assume that $\zeta$ has no vertical saddle-connections and thus there is a polygonal representation $(\pi,\la^\zeta,\tau^\zeta)$ of $\zeta$. Otherwise we can rotate $\zeta$ to obtain a form $\zeta'$ which does not admit vertical saddle connections, construct a triangulation $\mathcal Y(\zeta')$ and rotate it back together with this triangulation to obtain a triangulation $\mathcal Y(\zeta)$ (note that 
	a rotation is an isometry and 
	that it acts continuously on $\mathcal M(M,\Sigma,\kappa)$, see \cite{Yoccoz}).
	
	Let $\omega\in\mathcal M(M,\Sigma,\kappa)$, and assume that $\omega=M(\pi,\la^{\omega},\tau^{\omega})$ for some $\la^{\omega},\tau^{\omega}$. Let $\mathcal P(\omega)\subset \mathbb{C}$ be the polygon corresponding to $\omega$, whose vertices $R_0(\omega),R_1(\omega),\ldots,R_d(\omega),R_1'(\omega),\ldots,R_{d-1}'(\omega)$ are given by
	\[
	R_i(\omega):=\sum_{\{\al;\pi_0(\al)\le i\}}(\la_\al^\omega+i\tau_\al^\omega)\quad\text{ and }\quad R_i'(\omega):=\sum_{\{\al;\pi_1(\al)\le i\}}(\la_\al^\omega+i\tau_\al^\omega)\quad\text{ for }i=0,\ldots,d.
	\]
	Note that $R_0(\omega)=R_0'(\omega)=0$, $R_d(\omega)=R_d'(\omega)$. 
	For $i=1,\ldots,d-1$ consider the vertical segments connecting $R_i(\omega)$ and $R_i'(\omega)$ with the opposite side of $\mathcal P(\omega)$. Denote the other endpoints of those segments by $Q_i(\omega)$ and $Q_i'(\omega)$ respectively (see Fig. 3). Since each side on the upper half of the polygon is identified with one of the sides on the lower half of the polygon, there exist representations of $Q_i(\omega)$ and $Q_i'(\omega)$ on the opposite half of the polygon which we denote by $S_i(\omega)$ and $S_i'(\omega)$ respectively.
	 Note that
	 \[
	 \operatorname{Re}(S_i(\omega))=T_{\pi,\la^\omega}^{-1}(\operatorname{Re}(R_i(\omega)))\quad\text{and}\quad\operatorname{Re}(S_i'(\omega))=T_{\pi,\la^\omega}(\operatorname{Re}(R_i'(\omega))),
	 \]
	 where $T_{\pi,\la^\omega}$ is the IET given by $(\pi,\la)$.
	 	 \begin{figure}[h]\label{polygon1}
 \begin{center}
	 	 	\includegraphics[scale=0.4]{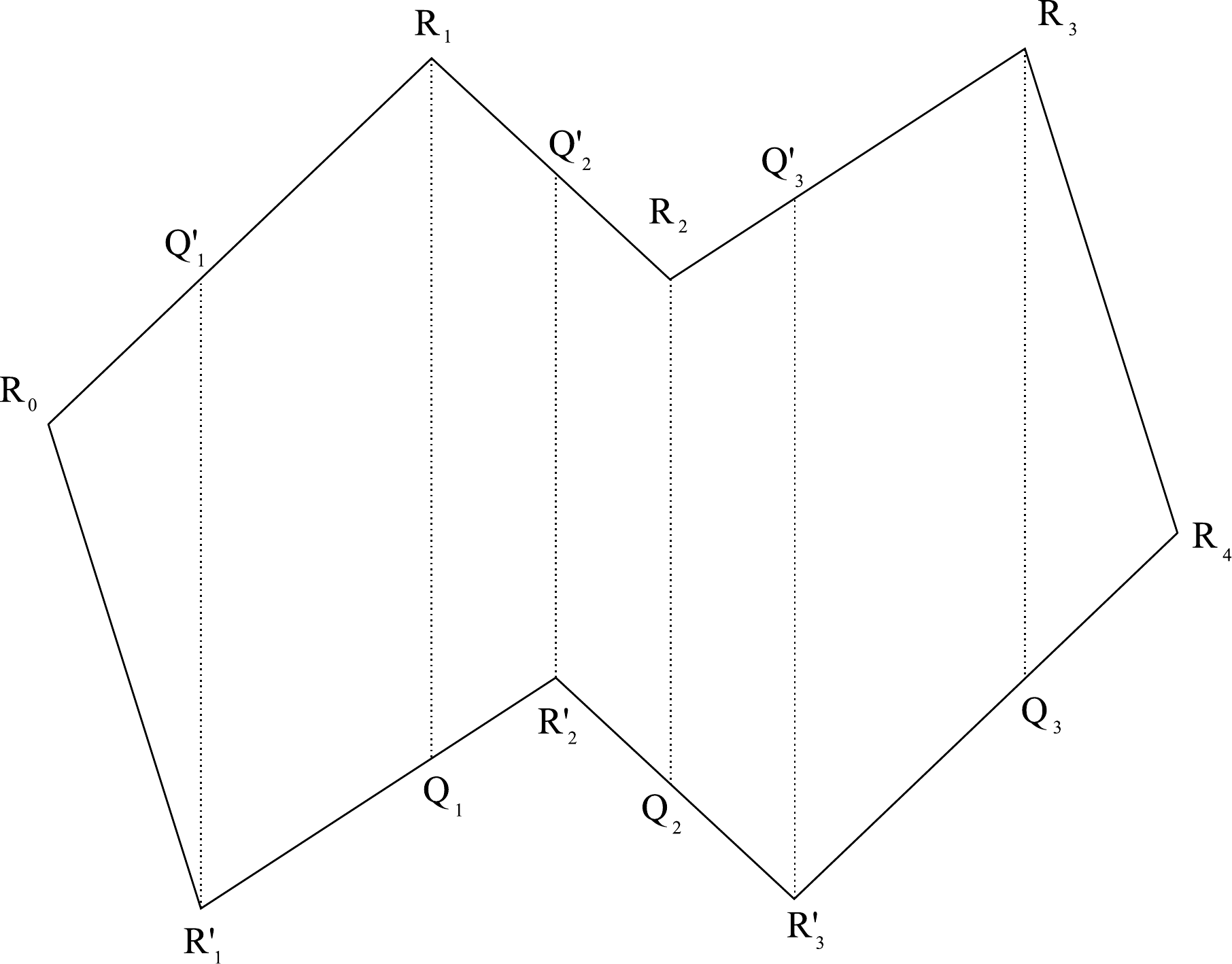}
	 	 	\caption{The vertices of $\mathcal P(\omega)$ and their projections on opposite sides.}
 \end{center}
	 	 \end{figure}
	 Let
	\[\mathcal V(\omega):=\{R_0(\omega),\ldots,R_d(\omega),R_1'(\omega),\ldots,R_{d-1}'(\omega),S_1(\omega),\ldots,S_{d-1}(\omega),S_1'(\omega),\ldots,S_{d-1}'(\omega)\}.
	\]
	  $\mathcal V(\omega)$ is fully determined by $(\la^\omega,\tau^\omega)$. If $\omega$ has no vertical saddle connections, then for all distinct 
	  $x,y\in\mathcal V(\omega)$ we have $\operatorname{Re}(x)\neq\operatorname{Re}(y)$. Consider the sequence $\{V_j(\omega)\}_{j=0}^{4d-2}$, which is an ordering of $\mathcal V(\omega)$, such that $\{\operatorname{Re}(V_j)\}_{j=0}^{4d-2}$ 
	  be an increasing sequence.

	Let $\ep_1>0$ be such that for every $\omega\in\mathcal M(M,\Sigma,\kappa)$ satisfying $d_{Mod}(\zeta,\omega)<\ep_1$, the orderings of the sets $\mathcal V(\zeta)$ and $\mathcal V(\omega)$ are the same, that is
	\[
	V_j(\omega)=R_i(\omega)\text{ iff } V_j(\zeta)=R_i(\zeta);\quad\quad
	V_j(\omega)=R_i'(\omega)\text{ iff }V_j(\zeta)=R_i'(\zeta),
	\]	
	\[
	V_j(\omega)=S_i(\omega)\text{ iff } V_j(\zeta)=S_i(\zeta)\quad\text{and}\quad
	V_j(\omega)=S_i'(\omega)\text{ iff }V_j(\zeta)=S_i'(\zeta),
	\]	
	for every $j=0,\ldots,2d$, and $\{\operatorname{Re}(V_i(\omega))\}_{i=0}^d$ is strictly increasing.
	
	For every $\omega\in \mathcal M(M,\Sigma,\kappa)$ with $d_{Mod}(\zeta,\omega)<\ep_1$, we now construct a triangulation  $\mathcal Y(\omega)$ of $\mathcal P(\omega)$. (We abuse the word ``triangulation'' since  the edges may connect vertices of triangulation which are actually the same points.) Let $\{r(k)\}_{0\leq k\leq 2d}$  be 
	the strictly increasing sequence in $\{0,\ldots,4d-2\}$ such that 
	$V_{r(0)},\ldots,V_{r(2d)}$ 
	be the vertices of $\mathcal P(\omega)$. Let $\tilde V_{r(k)}(\omega):=Q_i(\omega)$ whenever $V_{r(k)}(\omega)=R_i(\omega)$ and analogously let $\tilde V_{r(k)}(\omega):=Q_i'(\omega)$ whenever $V_{r(k)}(\omega)=R_i'(\omega)$.
	
	Consider the triangle given by 
	the points $V_{r(0)}=0,V_{r(1)},\tilde V_{r(1)}$. If $r(1)=1$, then this triangle belongs to $\mathcal Y(\omega)$. If $r(1)\neq 1$ and $\operatorname{Im}(V_1(\omega))>0$, then 
	we connect by segments all $V_i(\omega)$ such that $i\le r(1)$ and $\operatorname{Im}(V_i(\omega))<0$ with $V_1(\omega)$,
	and for all $i\le r(1)$ such that $\operatorname{Im}(V_i(\omega))>0$, we connect $V_i(\omega)$ with the point $V_{r(1)}(\omega)$ or $\tilde V_{r(1)}(\omega)$, whichever has the negative imaginary part. If $r(1)\neq 1$ and $\operatorname{Im}(V_1(\omega))<0$, then we proceed symmetrically. The triangles obtained by using the above segments are elements of $\mathcal Y(\omega)$. By applying vertical reflection, we use the same construction for the triangle given by points $V_{r(2d-1)}(\omega),\tilde V_{r(2d-1)}(\omega),V_{r(2d)}(\omega)$.
	
	For every $k=1,\ldots,2d-2$, consider 
	the trapezoid given by 
	$V_{r(k)}(\omega)$, $V_{r(k+1)}(\omega)$, $\tilde V_{r(k)}(\omega)$ and $\tilde V_{r(k+1)}(\omega)$. In each of those trapezoids, we take 
	the diagonal connecting the top-left vertex 
	with the bottom-right vertex. If $r(k+1)=r(k)+1$, then the two resulting triangles belong to $\mathcal Y(\omega)$. If $r(k+1)\neq r(k)+1$, then for every $r(k)<i<r(k+1)$, 
	we connect $V_i(\omega)$ with 
	the bottom-right vertex if $\operatorname{Im}(V_i(\omega))>0$,  and with 
	the top-left vertex if $\operatorname{Im}(V_i(\omega))<0$. 
	We include the resulting triangles into $\mathcal Y(\omega)$. 
	In this way we get a triangulation $\mathcal Y(\omega)$ of $\mathcal P(\omega)$ into triangles which have vertices in $\mathcal V(\omega)$ (see Fig.~4).
	\begin{figure}[h]
 \begin{center}
		\includegraphics[scale=0.4]{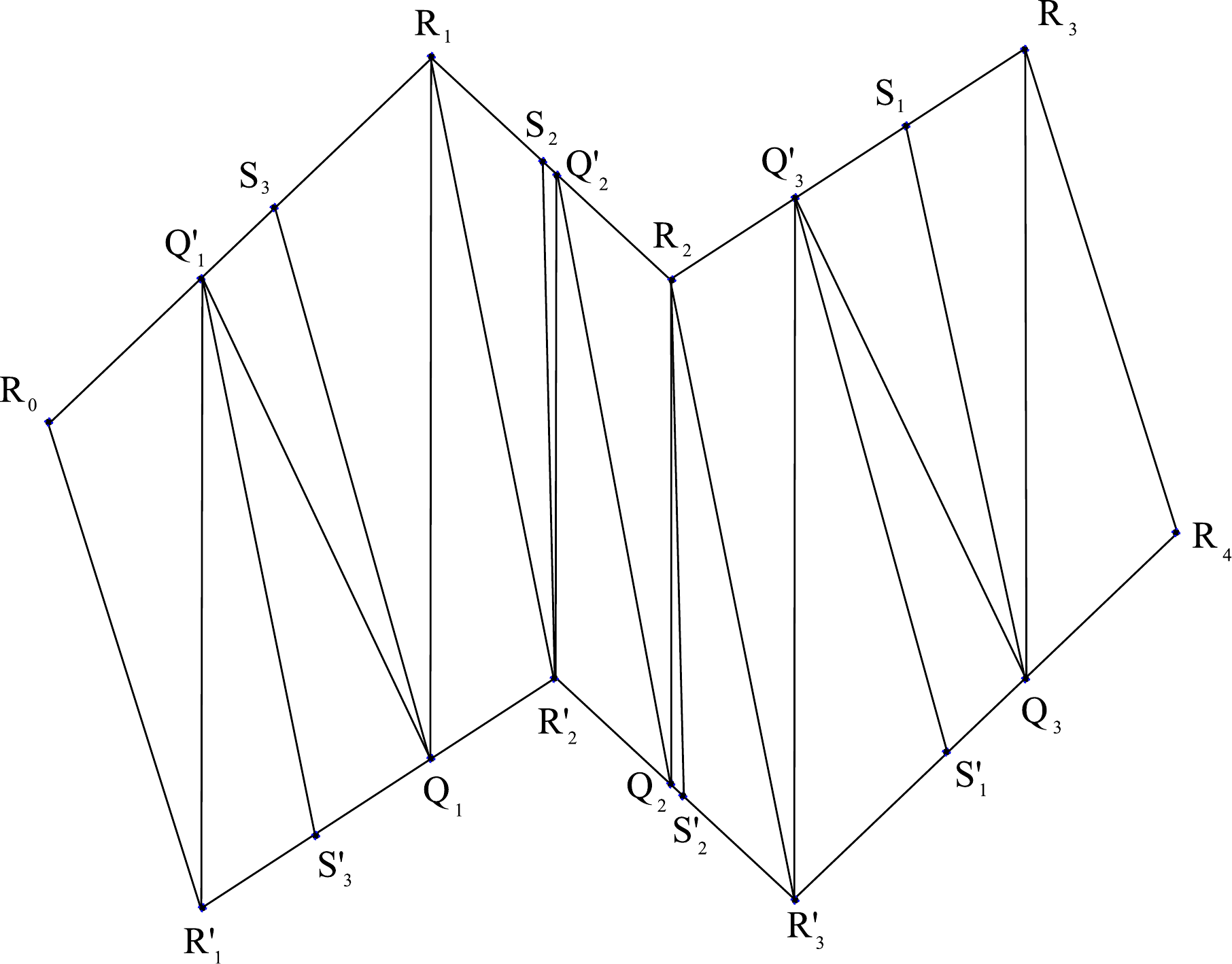}
		\caption{The triangulation $\mathcal Y(\omega)$ for the polygon in Fig. 3.}
 \end{center}
 		\end{figure}
	
	Define $\mathfrak{h}_\omega:(M,\omega)\to (M,\zeta)$ as 
	the piecewise affine transformation, such that
	\[
	\mathfrak h_\omega(V_i(\omega))=V_i(\zeta)\text{ for every }i=0,\ldots,4d-2
	\]
	and 
	sending affinely each triangle from $\mathcal Y(\omega)$ with vertices $V_j(\omega), V_k(\omega), V_{\ell}(\omega)$ onto the triangle with vertices $V_j(\zeta), V_k(\zeta), V_{\ell}(\zeta)$. Note that the map $\mathfrak{h}_\omega$ is uniquely determined by the points in $\mathcal V(\omega)$. Moreover, since $\Sigma\subset\mathcal V(\omega)$, 
	$\mathfrak h_\omega$ fixes $\Sigma$.
	
	Let $0<\ep_2<\ep_1$ be such that, defining $\tilde{\mathcal U}_\zeta:=\{\omega\in\mathcal M(M,\Sigma,\kappa),d_{Mod}(\zeta,\omega)<\ep_2 \}$, we have
	\begin{equation}
	\omega\in\tilde{\mathcal U}_\zeta\Rightarrow 1-\ep_\zeta<\frac{\la_\zeta(A)}{\la_\omega(\mathfrak h_\omega^{-1}(A))}<1+\ep_\zeta\text{ for every }A\in\mathcal Y(\zeta).
	\end{equation}
	This implies that $(\mathfrak h_\omega)_*\la_\omega$ is absolutely continuous with respect to $\lambda_\zeta$ and has a piecewise constant density $f_\omega$ given by
	\begin{equation}\label{wzornaf}
	f_\omega(x)=\frac{\la_\omega(\mathfrak h_\omega^{-1}(A))}{\la_\zeta(A)}\text{ for every }x\in A\text{ where }A\in\mathcal Y(\zeta).
	\end{equation}
	Hence
	\[
	\frac{1}{1+\ep_\zeta}<f_\omega<\frac{1}{1-\ep_\zeta}.
	\]
	Moreover, note that for every $A\in\mathcal Y(\zeta)$, the mapping $\omega\to\la_\omega(\mathfrak h_\omega^{-1}(A))$ is continuous. Hence, the formula \eqref{wzornaf} implies the continuity of $\omega\mapsto f_\omega\in L^1(M,\lambda_\zeta)$.
	
	 Let $\omega\in\tilde{\mathcal U}_\zeta$. Then for any $\bar{\omega}\in\tilde{\mathcal U}_\zeta$, $\mathfrak h_{\bar\omega}^{-1}\circ\mathfrak h_{\omega}$ is a continuous piecewise affine homoeomorphism which is affine on the elements of $\mathcal Y(\omega)$ and
	 \[
	 \mathfrak h_{\bar\omega}^{-1}\circ\mathfrak h_{\omega}(V_i(\omega))=V_i(\bar{\omega}).
	 \]
	 Take any $A\in\mathcal Y(\zeta)$ and let $V_j(\zeta),V_k(\zeta),V_{\ell}(\zeta)\in\mathcal V(\zeta)$ be its vertices. Then $\operatorname{lin}(\mathfrak h_{\bar\omega}^{-1}\circ\mathfrak h_{\omega}|_{\mathfrak h_\omega^{-1}A})$ is given by a matrix $B_A(\omega,\bar{\omega})=[b_{ij}(\omega,\bar{\omega})]_{i,j=1,2}$, where
	 \[
	 	 b_{1,1}(\omega,\bar{\omega})=\frac{\operatorname{Re}(V_k(\bar\omega)-V_j(\bar{\omega}))\operatorname{Im}(V_{\ell}(\omega)-V_j(\omega))-\operatorname{Re}(V_{\ell}(\bar\omega)-V_j(\bar{\omega}))\operatorname{Im}(V_k(\omega)-V_j(\omega))}{\operatorname{Re}(V_k(\omega)-V_j(\omega))\operatorname{Im}(V_{\ell}(\omega)-V_j(\omega))-\operatorname{Re}(V_{\ell}(\omega)-V_j(\omega))\operatorname{Im}(V_k(\omega)-V_j(\omega))},
	\]
	\[
		 	 b_{1,2}(\omega,\bar{\omega})=\frac{\operatorname{Re}(V_{\ell}(\bar\omega)-V_j(\bar{\omega}))\operatorname{Re}(V_k(\omega)-V_j(\omega))-\operatorname{Re}(V_k(\bar\omega)-V_j(\bar{\omega}))\operatorname{Re}(V_{\ell}(\omega)-V_j(\omega))}{\operatorname{Re}(V_k(\omega)-V_j(\omega))\operatorname{Im}(V_{\ell}(\omega)-V_j(\omega))-\operatorname{Re}(V_{\ell}(\omega)-V_j(\omega))\operatorname{Im}(V_k(\omega)-V_j(\omega))},
	\]
	\[
		 	 b_{2,1}(\omega,\bar{\omega})=\frac{\operatorname{Im}(V_k(\bar\omega)-V_j(\bar{\omega}))\operatorname{Im}(V_{\ell}(\omega)-V_j(\omega))-\operatorname{Im}(V_{\ell}(\bar\omega)-V_j(\bar{\omega}))\operatorname{Im}(V_k(\omega)-V_j(\omega))}{\operatorname{Re}(V_k(\omega)-V_j(\omega))\operatorname{Im}(V_{\ell}(\omega)-V_j(\omega))-\operatorname{Re}(V_{\ell}(\omega)-V_j(\omega))\operatorname{Im}(V_k(\omega)-V_j(\omega))},
	\]
	and
	\[
			 	 b_{2,2}(\omega,\bar{\omega})=\frac{\operatorname{Re}(V_k(\omega)-V_j(\omega))\operatorname{Im}(V_{\ell}(\bar\omega)-V_j(\bar{\omega}))-\operatorname{Im}\operatorname{Re}(V_{\ell}(\omega)-V_j(\omega))(V_k(\bar\omega)-V_j(\bar{\omega}))}{\operatorname{Re}(V_k(\omega)-V_j(\omega))\operatorname{Im}(V_{\ell}(\omega)-V_j(\omega))-\operatorname{Re}(V_{\ell}(\omega)-V_j(\omega))\operatorname{Im}(V_k(\omega)-V_j(\omega))}.
	\]
	Note that, to obtain a formula for $B_A(\bar{\omega},\omega)=\operatorname{lin}(\mathfrak h_{\omega}^{-1}\circ\mathfrak h_{\bar\omega}|_{\mathfrak h_{\bar\omega}^{-1}A})$, it is enough to switch $\omega$ with $\bar{\omega}$. Observe that $B_A(\omega,\omega)=Id$.
	Since all coefficients depend continuously on the elements of $\mathcal V(\bar\omega)$ and $\mathfrak h_\zeta=Id$, by taking $\bar\omega=\zeta$ we can find $0<\ep_3\le\ep_2$ such that for all $\omega\in\mathcal U_\zeta:=\{\omega\in\mathcal M(M,\Sigma,\kappa),d_{Mod}(\zeta,\omega)<\ep_3 \}$, $\mathfrak h_{\omega}$ and $\mathfrak h_{\omega}^{-1}$ are Lipschitz with constant $\frac{11}{10}$. Moreover, for any $\epsilon>0$ and any $\omega\in\mathcal U_\zeta$ we can find $\de>0$ such that for all $A\in\mathcal Y(\zeta)$ and for all $\bar{\omega}$ satisfying $d_{Mod}(\omega,\bar{\omega})<\de$, we have that $\mathfrak h_{\omega}^{-1}\circ\mathfrak h_{\bar\omega}$ and $\mathfrak h_{\bar\omega}^{-1}\circ\mathfrak h_{\omega}$ are Lipschitz with constant $1+\epsilon$ and $\|Id -B_A(\omega,\bar{\omega})\|<\epsilon$ and $\|Id-B_A(\bar{\omega},\omega)\|<\epsilon$.
	
	To prove (v), note  that the set $\tilde M(\omega)$ does not contain points which are in the $4\epsilon$-neighbourhood of ingoing and outgoing
	vertical separatrix segments 
	of length $1$ starting from singular points. That is the 
	complement $\tilde M(\omega)^c$ is of measure $\la_\omega$ at most $(1+4\epsilon)8\epsilon$ times the number of ingoing and outgoing separatrices, which determines the value of $K$.
		
	For every $\bar\omega\in\mathcal U_\zeta$ denote by $X^{\bar\omega}:M\to\re^2$ the unit constant vertical vector field on $(M,\bar{\omega})$ defined on $M\setminus\Sigma$ which generates $\mathcal T^{\bar\omega}$. Then by (iv) we have that
	\begin{equation}\label{odlegpol}
	\|X^\omega(x)-D(\mathfrak h_{\omega}^{-1}\circ \mathfrak h_{\bar\omega})_{h_{\bar\omega}^{-1}\circ \mathfrak h_{\omega}(x)}X^{\bar{\omega}}(\mathfrak h_{\bar\omega}^{-1}\circ \mathfrak h_{\omega}(x))\|<\epsilon.
	\end{equation}
	Note that the vector field $D(\mathfrak h_{\omega}^{-1}\circ \mathfrak h_{\bar\omega})_{h_{\bar\omega}^{-1}\circ \mathfrak h_{\omega}(x)}X^{\bar{\omega}}(h_{\bar\omega}^{-1}\circ \mathfrak h_{\omega}(x))$ is well defined everywhere except 
	on the edges of the triangulation $\mathcal Y(\omega)$. Since this vector field is constant on the interiors of 
	the elements of 
	the triangulation $\mathcal Y(\omega)$, we can define 
	it on $M\setminus \Sigma$ by choosing on each edge of $\mathcal Y(\omega)$ one of the vectors derived from one of the two triangles forming this edge. It is worth to mention that,
	if the direction of an edge of $\mathcal Y(\bar\omega)$ given by some triangles $\mathfrak h_{\bar\omega}^{-1}(A)$ and $\mathfrak h_{\bar\omega}^{-1}(B)\in\mathcal Y(\bar{\omega})$
	coincides with the direction of the flow $\mathcal T^{\bar{\omega}}$, then the vector field $D(\mathfrak h_{\omega}^{-1}\circ \mathfrak h_{\bar\omega})_{h_{\bar\omega}^{-1}\circ \mathfrak h_{\omega}(x)}X^{\bar{\omega}}(h_{\bar\omega}^{-1}\circ
	 \mathfrak h_{\omega}(x))$ takes the same values on 
	 the triangles $\mathfrak h_{\omega}^{-1}(A),\mathfrak h_{\omega}^{-1}(B)\in\mathcal Y({\omega})$. Hence the flow induced by this vector field is well defined and $\eqref{odlegpol}$ holds everywhere on $M\setminus \Sigma$.
	Thus, in view of \eqref{odlegpol}, for any $t\in[-1,1]$ we have
	\[
	\begin{split}
	&\quad\Big|\int_0^t\|X^\omega(\mathcal T_s^\omega(x))-D(\mathfrak h_{\omega}^{-1}\circ \mathfrak h_{\bar\omega})_{\mathcal T_s^{\bar\omega}\circ \mathfrak h_{\bar\omega}^{-1}\circ \mathfrak h_{\omega}(x)}X^{\bar\omega}(\mathcal T_s^{\bar\omega}\circ\mathfrak h_{\bar\omega}^{-1}\circ \mathfrak h_{\omega}(x))\|ds\Big|\\
	&=\Big|\int_{0}^{t}\|X^\omega(\mathfrak h_\omega^{-1}\circ\mathfrak h_{\bar{\omega}}\circ\mathcal T_s^{\bar\omega}\circ\mathfrak h_{\bar\omega}^{-1}\circ \mathfrak h_{\omega}(x))-D(\mathfrak h_{\omega}^{-1}\circ \mathfrak h_{\bar\omega})_{\mathcal T_s^{\bar\omega}\circ \mathfrak h_{\bar\omega}^{-1}\circ \mathfrak h_{\omega}(x)}X^{\bar\omega}(\mathcal T_s^{\bar\omega}\circ\mathfrak h_{\bar\omega}^{-1}\circ \mathfrak h_{\omega}(x))\|ds\Big|\\
	&<|t|\epsilon\le\epsilon.
	\end{split}
	\]
	Since for every $x\in\tilde M(\omega)$ we have
	\[
	\mathfrak h_{\omega}^{-1}\circ \mathfrak h_{\bar\omega}\circ \mathcal T_t^{\bar\omega} \circ \mathfrak h_{\bar\omega}^{-1}\circ \mathfrak h_{\omega}(x)=x+\int_0^tD(\mathfrak h_{\omega}^{-1}\circ \mathfrak h_{\bar\omega})_{\mathcal T_s^{\bar\omega}\circ \mathfrak h_{\bar\omega}^{-1}\circ \mathfrak h_{\omega}(x)}X^{\bar\omega}(\mathcal T_s^{\bar\omega}(\mathfrak h_{\bar\omega}^{-1}\circ \mathfrak h_{\omega}(x)))ds
	\]
	in local coordinates, 
	we deduce that
	\[
	\begin{split}
		d_\omega&(\mathcal T_t^\omega(x),\mathfrak h_{\omega}^{-1}\circ \mathfrak h_{\bar\omega}\circ \mathcal T_t^{\bar\omega} \circ \mathfrak h_{\bar\omega}^{-1}\circ \mathfrak h_{\omega}(x))\\
		&\le\Big|\int_0^t\|X^\omega(\mathcal T_s^\omega(x))-D(\mathfrak h_{\omega}^{-1}\circ \mathfrak h_{\bar\omega})_{\mathcal T_s^{\bar\omega}\circ \mathfrak h_{\bar\omega}^{-1}\circ \mathfrak h_{\omega}(x)}X^{\bar\omega}(\mathcal T_s^{\bar\omega}(\mathfrak h_{\bar\omega}^{-1}\circ \mathfrak h_{\omega}(x)))\|ds\Big|
		\le\epsilon.
		\end{split}
	\]
	Since $x\in\tilde M({\omega})$, this also implies that for every $\sigma\in\Sigma$ we have
	\[
	d_\omega(\sigma,\mathfrak h_{\omega}^{-1}\circ \mathfrak h_{\bar\omega}\circ \mathcal T_t^{\bar\omega} \circ \mathfrak h_{\bar\omega}^{-1}\circ \mathfrak h_{\omega}(x))
	>3\epsilon.
	\]
	This concludes the proof of (v) and thus the 
	proof of the whole lemma.	
\end{proof}

\begin{lm}\label{prostokat}
	Let $\omega\in\mathcal M(M,\Sigma,\kappa)$ and let $D$ be a rectangle in $(M,\omega)$. For every $\ep>0$ there exists $\de>0$ such that, for every $\la_{\omega}$-measure preserving $F:D\to(M,\omega)$ satisfying
	\begin{equation}\label{prostokat1}
	\sup_{x\in D}d_\omega(x,F(x))<\de,
	\end{equation}
	we have
	\begin{equation}\label{prostokat2}
	\la_\omega(D)-\la_\omega(D\cap F(D))<\ep.
	\end{equation}
	\begin{proof}
		Let $\ep>0$. We assume that $\ep<\la_\omega(D)$, otherwise the result is obvious. Choose $\de>0$ such that the set
		\[
		\hat D:=\{x\in D; \forall {y\in M},\ d_\omega(x,y)<\de\Rightarrow y\in D \}
		\]
		  has measure $\la_\omega(\hat D)>\la_\omega(D)-\ep$. Then, for any $\la_\omega$-measure preserving $F:D\to(M,\omega)$ satisfying \eqref{prostokat1}, we have 
		  $F(\hat D)\subset D$. Hence
		\[
		\la_\omega(D\cap F(D))\ge\la_\omega(F(\hat D))=\la_\omega(\hat D)>\la_\omega(D)-\ep.
		\]
	\end{proof}
\end{lm}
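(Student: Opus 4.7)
The plan is to find $\delta$ by an ``inner approximation'' of $D$ by points far from its boundary, which then forces $F$ to send most of $D$ back into $D$, and combine this with measure preservation. Concretely, I would introduce
\[
\hat D := \{x \in D : \text{for every } y \in M,\ d_\omega(x,y) < \delta \Rightarrow y \in D\},
\]
i.e.\ the set of points in $D$ at $d_\omega$-distance at least $\delta$ from the complement of $D$. The first step is then to prove the quantitative statement that $\la_\omega(\hat D) > \la_\omega(D) - \ep$ for $\delta$ small enough.

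For the first step, since $D$ is a rectangle in the translation structure $\omega$, the set $D \setminus \hat D$ is contained in the $\delta$-neighborhood of $\partial D$, whose $\la_\omega$-measure is $O(\delta)$ (the boundary is a finite union of straight segments of finite length, and the translation structure is locally Euclidean). Thus one can simply pick $\delta > 0$ so small that $\la_\omega(D \setminus \hat D) < \ep$. The second step is the key geometric observation: if $F : D \to (M,\omega)$ moves each point by less than $\delta$, then by the very definition of $\hat D$ we have $F(\hat D) \subset D$, because for any $x \in \hat D$ the image $F(x)$ lies in the ball of radius $\delta$ around $x$, which is contained in $D$.

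The third and final step combines this with the hypothesis that $F$ preserves $\la_\omega$: from $F(\hat D) \subset D \cap F(D)$ and measure preservation we get
\[
\la_\omega(D \cap F(D)) \ge \la_\omega(F(\hat D)) = \la_\omega(\hat D) > \la_\omega(D) - \ep,
\]
which is the desired conclusion. There is no real obstacle here; the only point one must be a bit careful about is the measure of the $\delta$-collar of $\partial D$, but this is elementary for a rectangle in a translation surface since locally around any non-singular boundary point the structure is just Euclidean, and the rectangle $D$ by assumption avoids singularities in its interior (so the collar is genuinely a thin Euclidean strip).
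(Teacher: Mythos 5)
Your proposal is essentially identical to the paper's proof: both introduce the same inner set $\hat D$ of points at $d_\omega$-distance at least $\delta$ from $M\setminus D$, observe that $F(\hat D)\subset D$ whenever $F$ moves points by less than $\delta$, and conclude via measure preservation. The only difference is that you spell out more explicitly why $\la_\omega(D\setminus\hat D)\to 0$ as $\delta\to 0$ (the $\delta$-collar of $\partial D$ has measure $O(\delta)$), which the paper leaves implicit.
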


\begin{tw}\label{wloz}
Let $\zeta\in\mathcal M(M,\Sigma,\kappa)$ and let $\mathcal U_\zeta$ be 
the neighbourhood given by Lemma \ref{techn}. There exists a continuous mapping $\mathfrak{S}:\mathcal{U}_\zeta\to \operatorname{Flow}(M,\la_\zeta)$
such that for every $\omega\in\mathcal U_\zeta$ the vertical flow on $(M,\omega)$ is measure-theoretically isomorphic by a homeomorphism to the measure-preserving flow $\mathfrak{S}(\omega)$ on $(M,\la_\zeta)$.
\end{tw}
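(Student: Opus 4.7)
The plan is to define, for each $\omega\in\mathcal U_\zeta$, the self-homeomorphism $F_\omega:=\mathcal H_{f_\omega}\circ \mathfrak h_\omega$ of $M$, where $\mathfrak h_\omega$ and $f_\omega$ come from Lemma~\ref{techn} and $\mathcal H_{f_\omega}$ is supplied by Theorem~\ref{glow} (applicable because Lemma~\ref{techn}(ii) places $f_\omega$ in $\mathcal W(M,\tfrac{1}{1+\ep_\zeta},\tfrac{1}{1-\ep_\zeta})$). Then $(F_\omega)_*\la_\omega=(\mathcal H_{f_\omega})_*(f_\omega\la_\zeta)=\la_\zeta$, so the conjugated flow $\mathfrak S(\omega)_t:=F_\omega\circ\mathcal T_t^\omega\circ F_\omega^{-1}$ lives in $\operatorname{Flow}(M,\la_\zeta)$ and is isomorphic by the homeomorphism $F_\omega$ to $\mathcal T^\omega$. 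The remaining task is to establish continuity of $\mathfrak S$.

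To prove continuity at a fixed $\omega\in\mathcal U_\zeta$, I would apply Remark~\ref{transport} with the measure isomorphism $F_\omega\colon (M,\la_\omega)\to(M,\la_\zeta)$, which reduces the problem to showing that $\bar\omega\mapsto F_\omega^{-1}\circ \mathfrak S(\bar\omega)\circ F_\omega=\phi_{\bar\omega}\circ\mathcal T^{\bar\omega}\circ\phi_{\bar\omega}^{-1}$ is continuous at $\bar\omega=\omega$ in $\operatorname{Flow}(M,\la_\omega)$, where $\phi_{\bar\omega}:=F_\omega^{-1}\circ F_{\bar\omega}=\mathfrak h_\omega^{-1}\circ\mathcal H_{f_\omega}^{-1}\circ\mathcal H_{f_{\bar\omega}}\circ\mathfrak h_{\bar\omega}$ and $\phi_\omega=Id$. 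Using that the class of $A$ for which continuity holds is a $\sigma$-algebra, I would take as a generating family the small rectangles $A$ in local coordinates of $(M,\omega)$ whose vertical orbits of length $1$ stay inside a single chart disjoint from $\Sigma$; it suffices to verify that for any such $A$ and $\epsilon>0$ there exists $\de>0$ with $d_{Mod}(\omega,\bar\omega)<\de$ implying
\[
\sup_{t\in[-1,1]}\la_\omega\bigl(\mathcal T_t^\omega A\,\triangle\,\phi_{\bar\omega}\mathcal T_t^{\bar\omega}\phi_{\bar\omega}^{-1}A\bigr)<\epsilon.
\]

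To get pointwise displacement control, I would combine two inputs. First, Lemma~\ref{techn}(v) furnishes a set $\tilde M(\omega)$ with $\la_\omega(\tilde M(\omega))>1-K\epsilon$ on which
\[
d_\omega\bigl(\mathcal T_t^\omega(x),\mathfrak h_\omega^{-1}\mathfrak h_{\bar\omega}\mathcal T_t^{\bar\omega}\mathfrak h_{\bar\omega}^{-1}\mathfrak h_\omega(x)\bigr)<\epsilon\quad\text{for all }t\in[-1,1].
\]
Second, the continuity of $\omega\mapsto f_\omega\in L^1$ (Lemma~\ref{techn}(iii)) composed with $f\mapsto\mathcal H_f\in\operatorname{Hom}(M)$ (Theorem~\ref{glow}) gives that $\mathcal H_{f_\omega}^{-1}\mathcal H_{f_{\bar\omega}}$ is uniformly close to $Id$ in the $d_\zeta$-Hom metric; since $\mathfrak h_\omega^{-1}$ is Lipschitz (Lemma~\ref{techn}(i)), the extra factor in $\phi_{\bar\omega}$ shifts points by $o(1)$ in $d_\omega$. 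Inserting this at both ends and using that the flow $\mathcal T^{\bar\omega}$ is an isometry of $(M,d_{\bar\omega})$, I obtain
\[
\sup_{x\in\tilde M(\omega),\,t\in[-1,1]}d_\omega\bigl(\mathcal T_t^\omega(x),\,\phi_{\bar\omega}\mathcal T_t^{\bar\omega}\phi_{\bar\omega}^{-1}(x)\bigr)<3\epsilon.
\]

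Finally, the measure-preserving self-map $G_t:=\phi_{\bar\omega}\mathcal T_t^{\bar\omega}\phi_{\bar\omega}^{-1}\mathcal T_{-t}^\omega$ of $(M,\la_\omega)$ satisfies $d_\omega(y,G_t(y))<3\epsilon$ for $y\in\mathcal T_t^\omega(\tilde M(\omega))$, which is a subset of the rectangle $D:=\mathcal T_t^\omega A$ of full relative measure up to $K\epsilon$. Applying Lemma~\ref{prostokat} to $D$ and $G_t$ (after possibly removing the small complement and adjusting constants) bounds $\la_\omega(D\triangle G_t(D))$ uniformly in $t$, which is exactly the required estimate. The main obstacle is guaranteeing the uniformity in $t\in[-1,1]$ while controlling \emph{two} compositions of nearly-identity homeomorphisms around the flow; this is exactly what the safety margin built into $\tilde M(\omega)$ in Lemma~\ref{techn}(v) is designed to absorb, so the argument reduces to bookkeeping the $\epsilon$'s between the Hom-closeness of $\mathcal H_{f_\omega}^{-1}\mathcal H_{f_{\bar\omega}}$, the $d_\omega$-closeness of the conjugated flows, and the rectangle symmetric difference estimate.
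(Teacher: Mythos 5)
Your proposal is essentially the paper's proof: you construct $\mathcal S_\omega=\mathcal H_{f_\omega}\circ\mathfrak h_\omega$ from Lemma~\ref{techn} and Theorem~\ref{glow}, reduce continuity via Remark~\ref{transport} to the conjugated flow on $(M,\la_\omega)$, combine Lemma~\ref{techn}(v) with the Hom-continuity of $\omega\mapsto\mathcal H_{f_\omega}$ to obtain a uniform $O(\epsilon)$ pointwise displacement on $\tilde M(\omega)$ for $t\in[-1,1]$, and close with Lemma~\ref{prostokat} on rectangles. The only cosmetic differences from the paper are the numerical constant ($3\epsilon$ vs.\ $4\epsilon$), the choice of generating family (the paper takes all small rectangles and trims $\tilde Q$ near separatrices rather than restricting to rectangles avoiding $\Sigma$ from the start), and where the time parameter sits when invoking Lemma~\ref{prostokat} (the paper keeps the rectangles $D_j$ fixed and puts $t$ into the comparison map $F=\mathcal T_{-t}^\omega\circ\mathcal S_\omega^{-1}\circ\mathfrak S(\bar\omega)_t\circ\mathcal S_\omega$, whereas you move $t$ into the rectangle $D=\mathcal T_t^\omega A$, which is fine since these rectangles are all congruent).
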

\begin{proof}
By (i) and (ii) in Lemma \ref{techn}, for every $\omega\in\mathcal U_\zeta$ there exists a homeomorphism $\mathfrak h_\omega:M\to M$ fixing $\Sigma$ and such that $(\mathfrak h_\omega)_*\la_\omega=f_\omega\la_\zeta$ , where $f_\omega$ satisfies $\frac{1}{1+\ep_\zeta}<f_\omega<\frac{1}{1-\ep_\zeta}$. Hence we can apply Theorem \ref{glow} to obtain a homeomorphism $\mathcal H_\omega:=\mathcal H_{f_\omega}:M\to M$, which depends continuously on $f_\omega$ and  $(\mathcal H_{f_\omega})_*(f_\omega\la_\zeta)=\la_\zeta$. By (iii) in Lemma \ref{techn}, it follows that the map $\omega\mapsto f_\omega$ is continuous. Hence $\omega\mapsto\mathcal H_\omega$, as a composition of two continuous mappings, is also continuous. Now define a homeomorphism of $M$
\[
\mathcal S_\omega:=\mathcal H_{\omega}\circ \mathfrak h_\omega.
\]
Note that $(\mathcal S_\omega)_*\la_\omega=\la_\zeta$ and the flow $\mathcal S_\omega\circ\mathcal T^\omega\circ\mathcal S_\omega^{-1}$ is $\la_\zeta$-measure preserving. To conclude the proof we show now that the mapping $\mathcal U_\zeta\ni\omega\mapsto \mathcal S_\omega\circ\mathcal T^\omega\circ\mathcal S_\omega^{-1}=:\mathfrak S(\omega)\in\operatorname{Flow}(M,\la_\zeta)$ is continuous.

 Fix $\omega\in\mathcal U_\zeta$. We now prove the continuity of $\mathfrak S$ in $\omega$. On $(M,\omega)$ choose a family $\mathcal Q$ of open rectangles,
 with vertical and horizontal sides, that generates 
 the Borel  $\sigma$-algebra on $M$. We may assume that for every $Q\in\mathcal Q$ we have $\la_\omega(Q)\le \frac{1}{4}$.
 Note that $\mathcal S_\omega^{-1}:(M,\zeta)\to(M,\omega)$ is a 
 measure-theoretic isomorphism. Hence, in view of Remark \ref{transport}, to prove that $\mathfrak S$ is continuous, it is sufficient to prove that the map $\mathcal U_\zeta\ni\bar{\omega}\mapsto \mathcal S_\omega^{-1}\circ\mathfrak S(\bar{\omega})_t\circ\mathcal S_\omega\in\operatorname{Flow}(X,\la_\omega)$ is continuous. That is for every $\ep>0$ and $Q\in\mathcal Q$ there exists $\de>0$ such that
\begin{equation}\label{ciagztrans}
d_{Mod}(\omega,\bar\omega)<\de\Rightarrow \sup_{t\in[-1,1]}\la_\omega(\mathcal T^\omega_t Q\ \triangle\ \mathcal S_\omega^{-1}\circ\mathfrak S(\bar{\omega})_t\circ\mathcal S_\omega Q)<\ep.
\end{equation}

Fix $Q\in\mathcal Q$ and $\ep>0$. We now prove $\eqref{ciagztrans}$ for $Q$. Denote by $k$ the number of times the ingoing and outgoing vertical separatrix segments of length $2$ starting from the singular points $\sigma\in\Sigma$ intersect with $Q$. By extending those segments if necessary, we obtain segments $v_j\subset Q$, for $j=1,\ldots,k$, such that the endpoints of $v_j$ lie on a horizontal sides of $Q$.
Let $0<\epsilon_Q<\ep$, and consider the subset $\tilde Q\subseteq Q$ obtained by cutting out from $Q$ all rectangles of which the segments $v_j$ are vertical sides and whose width is $4\epsilon_Q$. Assume that $\epsilon_Q$ is small enough so that
\begin{equation}\label{QdoQ}
\la_{\omega}(\tilde Q)>(1-\ep)\la_\omega(Q).
\end{equation}
Note that $\tilde Q$ is a union of $l\le k+1$ rectangles $D_j$ for $j=1,\ldots,l$. By Lemma \ref{prostokat}, there exists $\gamma>0$ such that for every $j=1,\ldots,l$ and every $\la_\omega$-preserving transformation $F:D_j\to M$ satisfying $\sup_{x\in D_j}d_\omega(x,F(x))<4\gamma$ we have
\begin{equation}\label{niedaleko}
\la_\omega(D_j\cap F(D_j))>(1-\ep)\la_\omega(D_j).
\end{equation}

Take $0<\epsilon<\min\{\gamma,\epsilon_Q\}$.
Since $\bar{\omega} \mapsto \mathcal H_{\bar{\omega}}$ is continuous, we can choose $\de>0$ such that for every $\bar{\omega}\in U_\zeta$
\begin{equation}\label{odl}
d_{Mod}(\omega,\bar{\omega})<\de\Longrightarrow\big(\sup_{x\in M}d_\zeta\big(\mathcal H_{\omega}^{-1}\circ\mathcal H_{\bar\omega}(x),x\big)<\epsilon\quad\wedge\quad\sup_{x\in M}d_\zeta\big(\mathcal H_{\bar\omega}^{-1}\circ\mathcal H_{\omega}(x),x\big)<\epsilon\big).
\end{equation}
Moreover, by applying (iv) from Lemma \ref{techn} for $\epsilon$ and taking smaller $\de$ if necessary, we get that $\mathfrak h_{\omega}^{-1}\circ \mathfrak h_{\bar\omega}:(M,\bar{\omega})\to(M,\omega)$ and $\mathfrak h_{\bar\omega}^{-1}\circ \mathfrak h_{\omega}:(M,{\omega})\to(M,\bar\omega)$ are Lipschitz piecewise affine homeomorphisms with constant $1+\epsilon$. Furthermore, (v) in Lemma \ref{techn} gives us that the set $\tilde M(\omega)$ satisfies  $\la_\omega(\tilde M(\omega))>1-K\epsilon$ and for $x\in \tilde M({\omega})$ we have
\begin{equation}\label{odlpola1}
d_\omega(\mathcal T^\omega_t(x),\mathfrak h_\omega^{-1}\circ\mathfrak h_{\bar\omega}\circ\mathcal T^{\bar\omega}_t\circ\mathfrak h_{\bar\omega}^{-1}\circ\mathfrak h_{\omega}(x))<\epsilon\text{ for any }t\in[-1,1].
\end{equation}
It also implies that, for every $\sigma\in \Sigma$, 
\begin{equation}\label{eq3ep}
d_\omega(\sigma,\mathfrak h_\omega^{-1}\circ\mathfrak h_{\bar\omega}\circ\mathcal T^{\bar\omega}_t\circ\mathfrak h_{\bar\omega}^{-1}\circ\mathfrak h_{\omega}(x))>3\epsilon\text{ for any }t\in[-1,1].
\end{equation}
Since $\epsilon<\epsilon_Q$, we have $\tilde Q\subset\tilde M(\omega)$.

We now estimate the distance between the orbits of the flows $\mathfrak h_{\omega}^{-1}\circ \mathfrak h_{\bar\omega}\circ \mathcal T_t^{\bar\omega} \circ \mathfrak h_{\bar\omega}^{-1}\circ \mathfrak h_{\omega}$ and
\[
\mathcal S_{\omega}^{-1}\circ \mathcal S_{\bar\omega}\circ \mathcal T_t^{\bar\omega} \circ \mathcal S_{\bar\omega}^{-1}\circ \mathcal S_{\omega}=\mathfrak h_\omega^{-1}\circ\mathcal H_{\omega}^{-1}\circ\mathcal H_{\bar\omega}\circ\mathfrak h_{\bar{\omega}}\circ T^{\bar\omega}_t\circ h_{\bar\omega}^{-1}\circ\mathcal H_{\bar\omega}^{-1}\circ\mathcal H_{\omega}\circ h_{\omega}.
\]
By \eqref{odl} we have that
\[
d_\zeta\big(\mathfrak h_{\omega}(x), \mathcal H_{\bar\omega}^{-1}\circ\mathcal H_{\omega}\circ\mathfrak h_{\omega} (x)\big)<\epsilon
\]
for every $x\in M$.
By (i) in Lemma \ref{techn}, $\mathfrak h_{\bar\omega}^{-1}:(M,\zeta)\to(M,\omega)$ is Lipschitz with constant $\frac{11}{10}$. Thus we have
\[
d_{\bar\omega}\big(\mathfrak h_{\bar{\omega}}^{-1}\circ \mathfrak h_{\omega}(x), \mathfrak h_{\bar\omega}^{-1}\circ\mathcal H_{\bar\omega}^{-1}\circ\mathcal H_{\omega}\circ \mathfrak h_{\omega} (x)\big)<\frac{11}{10}\epsilon.
\]
Since $\mathfrak h_{\omega}^{-1}\circ \mathfrak h_{\bar\omega}$ is Lipschitz with constant $1+\epsilon$ and 
fixes $\Sigma$, \eqref{eq3ep} implies that
\[
\min_{\sigma\in\Sigma}\inf_{t\in[-1,1]}d_{\bar{\omega}}(\mathcal T_t^{\bar\omega} \circ \mathfrak h_{\bar\omega}^{-1}\circ \mathfrak h_{\omega}(x),\sigma)>\frac{3\epsilon}{1+\epsilon}>2\epsilon
\]
for every $x\in\tilde M(\omega)$. Hence on the $2\epsilon$-neighbourhood of $\mathfrak h_{\bar\omega}^{-1}\circ \mathfrak h_{\omega}(x)$, $\{\mathcal T^{\bar\omega}_t\}_{t\in[-1,1]}$ acts isometrically. Thus
\[
d_{\bar\omega}\big(\mathcal T^{\bar\omega}_t(\mathfrak h_{\bar\omega}^{-1}\circ \mathfrak h_{\omega}(x)), \mathcal T^{\bar\omega}_t\circ\mathfrak h_{\bar\omega}^{-1}\circ\mathcal H_{\bar\omega}^{-1}\circ\mathcal H_{\omega}\circ \mathfrak h_{\omega} (x)\big)<\frac{11}{10}\epsilon,
\]
for $t\in[-1,1]$ and for every $x\in\tilde M(\omega)$.
Since $\mathfrak h_{\bar\omega}:(M,\bar{\omega})\to(M,\zeta)$ is Lipschitz with constant $\frac{11}{10}$, this implies that
\[
d_{\zeta}\big(\mathfrak h_{\bar{\omega}}\circ \mathcal T^{\bar\omega}_t\circ\mathfrak h_{\bar\omega}^{-1}\circ\mathfrak h_{\omega}(x),\mathfrak h_{\bar{\omega}}\circ \mathcal T^{\bar\omega}_t\circ\mathfrak h_{\bar\omega}^{-1}\circ\mathcal H_{\bar\omega}^{-1}\circ\mathcal H_{\omega}\circ\mathfrak h_{\omega} (x)\big)<\frac{121}{100}\epsilon.
\]
Again by using \eqref{odl} we obtain that
\[
d_{\zeta}\big(\mathfrak h_{\bar{\omega}}\circ \mathcal T^{\bar\omega}_t\circ\mathfrak h_{\bar\omega}^{-1}\circ \mathfrak h_{\omega}(x),\mathcal H_{\omega}^{-1}\circ\mathcal H_{\bar\omega}\circ\mathfrak h_{\bar{\omega}}\circ \mathcal T^{\bar\omega}_t\circ\mathfrak h_{\bar\omega}^{-1}\circ\mathcal H_{\bar\omega}^{-1}\circ\mathcal H_{\omega}\circ \mathfrak h_{\omega} (x)\big)<\frac{221}{100}\epsilon.
\]
Finally, since $\mathfrak h_\omega^{-1}$ is also Lipschitz with constant $\frac{11}{10}$, we obtain that
\[
d_{\omega}\big(\mathfrak h_\omega^{-1}\circ\mathfrak h_{\bar{\omega}}\circ \mathcal T^{\bar\omega}_t\circ h_{\bar\omega}^{-1}\circ h_{\omega}(x),\mathfrak h_\omega^{-1}\circ\mathcal H_{\omega}^{-1}\circ\mathcal H_{\bar\omega}\circ\mathfrak h_{\bar{\omega}}\circ \mathcal T^{\bar\omega}_t\circ h_{\bar\omega}^{-1}\circ\mathcal H_{\bar\omega}^{-1}\circ\mathcal H_{\omega}\circ h_{\omega} (x)\big)<\frac{2431}{1000}\epsilon.
\]
By combining this with \eqref{odlpola1} we obtain that for every $x\in \tilde M(\omega)$ we have
\begin{equation}\label{waznyoszac}
d_{\omega}\big(\mathcal T_t^\omega(x),\mathcal S_{\omega}^{-1}\circ\mathfrak S(\bar{\omega})_t\circ\mathcal S_{\omega}(x)\big)<\frac{3431}{1000}\epsilon<4\epsilon.
\end{equation}
By the definition of $\tilde M(\omega)$, $\{\mathcal T^\omega_t\}_{t\in[-1,1]}$ acts isometrically on 
the $4\epsilon$-neighbourhood of $x\in\tilde M(\omega)$. Hence
\begin{equation}\label{waznyoszac1}
d_{\omega}\big(x,\mathcal T_{-t}^\omega\circ\mathcal S_{\omega}^{-1}\circ\mathfrak S(\bar{\omega})_t\circ\mathcal S_{\omega}(x)\big)<4\epsilon.
\end{equation}

Since $D_j\subseteq\tilde Q\subset \tilde M(\omega)$, \eqref{waznyoszac1} is satisfied for all $x\in D_j$. Consider
\[
F:= \mathcal T_{-t}^\omega\circ\mathcal S_{\omega}^{-1}\circ\mathfrak S(\bar{\omega})_t\circ\mathcal S_{\omega}.
\]
Note that $F$ is $\la_\omega$-measure preserving. Thus, by \eqref{niedaleko}, we get
\[
\la_\omega\big(D_j\cap\mathcal T_{-t}^\omega\circ\mathcal S_{\omega}^{-1}\circ\mathfrak S(\bar{\omega})_t\circ\mathcal S_{\omega}(D_j)\big)>(1-\ep)\la_\omega(D_j).
\]
Together with $\mathcal T^\omega$-invariance of $\la_\omega$, this yields
\[
\la_\omega\big(\mathcal T_t^\omega(D_j)\cap\mathcal S_{\omega}^{-1}\circ\mathfrak S(\bar{\omega})_t\circ\mathcal S_{\omega} (D_j)\big)>(1-\ep)\la_\omega(\mathcal T_t^\omega(D_j)),
\]
for every $t\in[-1,1]$. By summing up over $j=1,\ldots,l$ we get
\[
\la_\omega\big(\mathcal T_t^\omega(\tilde Q)\cap\mathcal S_{\omega}^{-1}\circ\mathfrak S(\bar{\omega})_t\circ\mathcal S_{\omega}(\tilde Q)\big)>(1-\ep)\la_\omega(\mathcal T_t^\omega(\tilde Q))
\]
and by \eqref{QdoQ} this yields
\[
\la_\omega\big(\mathcal T_t^\omega(Q)\cap\mathcal S_{\omega}^{-1}\circ\mathfrak S(\bar{\omega})_t\circ\mathcal S_{\omega} (Q)\big)>(1-2\ep)\la_\omega(\mathcal T_t^\omega(Q)).
\]
Since $\la_\omega(Q)<\frac{1}{4}$, we have
\[
\la_\omega\big(\mathcal T_t^\omega(Q)\triangle\mathcal S_{\omega}^{-1}\circ\mathfrak{S}(\bar{\omega})_t\circ\mathcal S_{\omega} (Q)\big)<4\ep\la_\omega(\mathcal T_t^\omega(Q))\le \ep.
\]
Thus we get~\eqref{ciagztrans}, and this concludes the proof of the  theorem.
\end{proof}

Since the construction above is local, we need to show that this suffices to transport the $G_\de$-condition from the space of flows to the moduli space.

\begin{lm}\label{pokr}
	Let $X$ be a metric topological space. Let $\{U_i\}_{i\in\n}$ be a sequence of open subsets such that $\bigcup_{i\in\n}U_i=X$. If $V\subseteq X$ is such that
	\[
	V\cap U_i\text{ is a }G_\de\text{-set for 
	each }i\in\n,
	\]
	then $V$ is a $G_\de$-set.
\end{lm}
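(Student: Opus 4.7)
The plan is to exhibit $V$ as a countable intersection of sets that are individually shown to be $G_\delta$, rather than (unsuccessfully) writing $V=\bigcup_i (V\cap U_i)$ as a countable union of $G_\delta$ sets (which is not $G_\delta$ in general). The key identity I will use is
\[
V=\bigcap_{i\in\n}\bigl(V\cup (X\setminus U_i)\bigr).
\]
The inclusion ``$\subseteq$'' is trivial. For ``$\supseteq$'', suppose $x$ lies in every $V\cup (X\setminus U_i)$. Since $\{U_i\}_{i\in\n}$ covers $X$, there is some $i_0$ with $x\in U_{i_0}$; then $x\in V\cup (X\setminus U_{i_0})$ combined with $x\notin X\setminus U_{i_0}$ forces $x\in V$.

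Next I show that each set $W_i:=V\cup (X\setminus U_i)$ is $G_\delta$. I rewrite
\[
W_i = (V\cap U_i)\cup (X\setminus U_i),
\]
so that $W_i$ is the union of two $G_\delta$ sets: the first is $G_\delta$ by hypothesis, and the second is closed in $X$, hence $G_\delta$ because in any metric space every closed set $C$ can be written as $C=\bigcap_{n\in\n}\{x\in X:d(x,C)<1/n\}$. I then invoke the standard fact that the union of two $G_\delta$ sets is again $G_\delta$: if $A=\bigcap_n A_n$ and $B=\bigcap_n B_n$ with $A_n,B_n$ open, then $A\cup B=\bigcap_{n,m}(A_n\cup B_m)$, as one checks by contrapositive on the nontrivial inclusion.

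Finally, $V$ is a countable intersection of $G_\delta$ subsets of $X$, and an iterated countable intersection of open sets is again a countable intersection of open sets, so $V$ is $G_\delta$ in $X$. There is no real obstacle here; the only point one must be careful about is resisting the temptation to start from the decomposition $V=\bigcup_i(V\cap U_i)$, which does not directly give the desired conclusion, and instead to dualise via complements of the cover, which is where the hypothesis that the $U_i$ are open (so $X\setminus U_i$ is closed, hence $G_\delta$) is used in an essential way.
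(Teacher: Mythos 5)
Your proof is correct and uses exactly the same decomposition as the paper, namely $V=\bigcap_{i\in\n}\bigl((V\cap U_i)\cup U_i^c\bigr)$, followed by the observations that closed sets in a metric space are $G_\delta$ and that finite unions of $G_\delta$ sets are $G_\delta$. The only difference is that you spell out the verification of these standard facts, whereas the paper leaves them as remarks.
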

\begin{proof}
	Note that $V=\bigcap_{i\in\n}(V\cap U_i)\cup U_i^c$. Since $X$ 
	is metrizable, every closed set is a $G_\de$-set. To finish the proof it is enough to 
	observe that the union of two $G_\de$-sets is a $G_\de$-set.
\end{proof}

\section{Translation flows disjoint with their inverses are $G_\de$-dense}\label{sec:main}
We have the following result which follows from the proof of Corollary 3.3 in \cite{DaRy}.
\begin{tw}\label{danrhyz}
	Let $(X,\mathcal B(X),\mu)$ be 
	a nonatomic standard Borel probability space, and let $\operatorname{Flow}(X)$ be 
	the space of $\mu$-invariant flows on $X$. The set of flows which are weakly mixing and disjoint with their 
	inverse is $G_\de$-dense in $\operatorname{Flow}(X)$.
	\end{tw}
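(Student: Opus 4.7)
My plan is to decompose the statement into two separate Baire-category assertions and combine them. Let $\mathrm{WM}\subset\operatorname{Flow}(X)$ denote the set of weakly mixing flows and let $\mathrm{D}\subset\operatorname{Flow}(X)$ denote the set of flows disjoint with their inverse. I would first verify that both $\mathrm{WM}$ and $\mathrm{D}$ are $G_\de$-dense; since $\operatorname{Flow}(X)$ is a Polish space, the intersection $\mathrm{WM}\cap\mathrm{D}$ is then $G_\de$-dense by the Baire category theorem.

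The genericity of $\mathrm{WM}$ is classical (see \cite{Halmos} for the automorphism version, which transfers to flows via Ambrose representation). Briefly, choose a countable dense family $\{f_k\}$ of mean-zero functions in $L^2(X,\mu)$; a flow $\mathcal T=\{T_t\}$ is weakly mixing if and only if for every $k,n\in\n$ there exists a rational $t$ with $|\langle f_k\circ T_t,f_k\rangle|<1/n$. Each such condition is open in $\operatorname{Flow}(X)$, so $\mathrm{WM}$ is $G_\de$. Density follows from the standard cutting-and-stacking approximation applied after reducing to special flows over a fixed aperiodic base.

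The substantive part is the $G_\de$-density of $\mathrm{D}$, where I would invoke the argument of Danilenko and Ryzhikov \cite{DaRy}, itself a flow-version of de Jonge's automorphism proof \cite{dJun}. For the $G_\de$-structure one fixes a countable generating algebra $\mathcal A_0\subset \mathcal B(X)$ and an enumeration of pairs from it; disjointness of $\mathcal T$ from $\mathcal T^{-1}$ means $\lambda=\mu\otimes\mu$ for every $\lambda\in J(\mathcal T,\mathcal T^{-1})$, which can be tested on such pairs, yielding a countable intersection of open conditions from the upper semicontinuity of $\mathcal T\mapsto \sup_{\lambda\in J(\mathcal T,\mathcal T^{-1})}|\lambda(A\times B)-\mu(A)\mu(B)|$ (the set of joinings is compact in the weak-$\ast$ topology and varies continuously with $\mathcal T$ in the appropriate sense). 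For density, one constructs rank-one flows in any prescribed neighborhood whose weak closure of self-joinings is rigid enough (supported on convex combinations of off-diagonal joinings) and whose cutting-and-stacking combinatorics break time-reversal symmetry sufficiently strongly that no joining between $\mathcal T$ and $\mathcal T^{-1}$ other than $\mu\otimes\mu$ can exist; a diagonal argument over a countable base of neighborhoods of flows then completes the density statement.

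The hard part is clearly the density of $\mathrm{D}$. One must arrange the cutting-and-stacking spacer parameters so as to simultaneously approximate an arbitrary given flow in the topology of $\operatorname{Flow}(X)$ and force the constructed flow to be disjoint from its inverse. Concretely, the spacer sequence must be tuned so that any weak limit of off-diagonal 2-joinings $\mu_{a_n}$ for $\mathcal T$ cannot coincide with the corresponding limit for $\mathcal T^{-1}$, which obstructs the existence of a nontrivial joining via an argument in the spirit of Lemma~\ref{podst}. Achieving approximation and asymmetry simultaneously is the technical core of \cite{DaRy}, and for the purposes of the present paper one simply cites the resulting statement.
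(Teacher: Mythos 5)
The paper gives no proof of this theorem; it simply attributes the statement to the proof of Corollary~3.3 in \cite{DaRy}, and your proposal ultimately cites the same source for the hard part (density), so the two take essentially the same approach. Your sketch of the $G_\de$ mechanism (upper semicontinuity of the joining correspondence $\mathcal T\mapsto J(\mathcal T,\mathcal T^{-1})$ tested against a countable algebra) and of the density mechanism (rank-one constructions with time-reversal-breaking spacers, followed by a diagonal argument) correctly reflects the ingredients of the Danilenko--Ryzhikov/del~Junco argument that the paper invokes.
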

	The following result allows us to transfer the $G_\de$ condition onto any connected component of the moduli space.
	\begin{prop}\label{propback}
		Let $\mathfrak{Prop}$ be a property 
		of a measure-preserving flow
		such that the set of elements having this property is a $G_\de$ subset of the space $\operatorname{Flow}(X)$. Then in every connected component $C$ of $\mathcal M$, the set of translation structures for which the vertical flow has the property $\mathfrak{Prop}$ is a $G_\de$ set in $C$.
		\end{prop}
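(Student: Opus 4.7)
The plan is to use the local continuous embeddings constructed in Theorem~\ref{wloz} to pull back the $G_\de$ condition from a space of flows to the moduli space, and then patch these local statements together via Lemma~\ref{pokr}. Fix a connected component $C$ of $\mathcal M$, which lies inside a single stratum $\mathcal M(M,\Sigma,\kappa)$, and let $\mathcal V\subseteq C$ denote the set of $\zeta\in C$ whose vertical flow $\mathcal T^\zeta$ enjoys the property $\mathfrak{Prop}$. Since $\mathfrak{Prop}$ corresponds to a $G_\de$ subset of $\operatorname{Flow}(X)$ for one (hence every) non-atomic standard Borel probability space, and since such a subset is automatically invariant under measure-theoretic isomorphism (via the homeomorphism $\phi$ described before Remark~\ref{transport}), we may treat $\mathfrak{Prop}$ as a genuine property of flows modulo isomorphism.

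Next, I would localize. For each $\zeta\in C$, Lemma~\ref{techn} and Theorem~\ref{wloz} produce an open neighbourhood $\mathcal U_\zeta\subseteq C$ of $\zeta$ together with a continuous map $\mathfrak{S}_\zeta\colon \mathcal U_\zeta\to\operatorname{Flow}(M,\la_\zeta)$ such that $\mathfrak{S}_\zeta(\omega)$ is isomorphic to $\mathcal T^\omega$ for every $\omega\in\mathcal U_\zeta$. Fix a measure-theoretic isomorphism between $(M,\la_\zeta)$ and the reference space $(X,\mu)$; by Remark~\ref{transport} it induces a homeomorphism $\phi_\zeta\colon\operatorname{Flow}(M,\la_\zeta)\to\operatorname{Flow}(X)$. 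Let $\mathcal G\subseteq\operatorname{Flow}(X)$ be the $G_\de$ set of flows having $\mathfrak{Prop}$. Then $\phi_\zeta\circ\mathfrak{S}_\zeta$ is continuous, so
\[
\mathcal V\cap \mathcal U_\zeta=(\phi_\zeta\circ\mathfrak{S}_\zeta)^{-1}(\mathcal G)
\]
is a $G_\de$ subset of $\mathcal U_\zeta$, where the equality uses the isomorphism-invariance of $\mathfrak{Prop}$ and the fact that $\mathfrak{S}_\zeta(\omega)$ is isomorphic to $\mathcal T^\omega$.

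Finally, I would globalize. The connected component $C$, being a subset of a complex orbifold (see \cite{Veech}), is second-countable, so the open cover $\{\mathcal U_\zeta\}_{\zeta\in C}$ admits a countable subcover $\{\mathcal U_{\zeta_i}\}_{i\in\n}$. For each $i$, the preceding paragraph gives that $\mathcal V\cap \mathcal U_{\zeta_i}$ is $G_\de$ in $\mathcal U_{\zeta_i}$, hence also in $C$ (the intersection of an open set with a $G_\de$ is still $G_\de$). Lemma~\ref{pokr}, applied to the Polish space $C$ with the countable open cover $\{\mathcal U_{\zeta_i}\}_{i\in\n}$, then yields that $\mathcal V$ itself is $G_\de$ in $C$.

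The main conceptual step is the identification $\mathcal V\cap\mathcal U_\zeta=(\phi_\zeta\circ\mathfrak{S}_\zeta)^{-1}(\mathcal G)$, which ultimately rests on Theorem~\ref{wloz} and therefore on the measure-adjusting homeomorphisms constructed in Section~\ref{sec:meas}; all other steps are soft topological manipulations, and the only subtlety to check is the second-countability of $C$ needed to extract the countable subcover before invoking Lemma~\ref{pokr}.
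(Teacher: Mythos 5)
Your proof is correct and follows essentially the same strategy as the paper's: use Theorem~\ref{wloz} to produce the local continuous embeddings $\mathfrak{S}_\zeta$, pull back the $G_\de$ set of flows to get $G_\de$ sets in each $\mathcal U_\zeta$, extract a countable subcover, and patch with Lemma~\ref{pokr}. The only cosmetic difference is in how you extract the countable subcover — you invoke second-countability (and hence the Lindelöf property) of $C$, whereas the paper notes that $C$ is a topological manifold and therefore $\sigma$-compact; both arguments are valid and immediately yield the countable subcover. One small phrasing caveat: a $G_\de$ subset of $\operatorname{Flow}(X)$ is not ``automatically'' isomorphism-invariant — the invariance comes from the hypothesis that $\mathfrak{Prop}$ is a \emph{property of a measure-preserving flow} (hence well-defined up to isomorphism), after which Remark~\ref{transport} shows the $G_\de$ description transports across isomorphic model spaces. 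This does not affect the validity of the argument, but it is cleaner to attribute the invariance to the hypothesis rather than to the $G_\de$ condition itself.
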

		
	\begin{proof}	
		Let $C$ be a connected component of $\mathcal M$. In view of Theorem \ref{wloz} for every $\zeta\in C$ there exists an open neighbourhood $\mathcal U_\zeta$ 
		of $\zeta$ and a continuous mapping $\mathfrak S_\zeta:\mathcal U_\zeta\to 
		\operatorname{Flow}(M,\la_\zeta)$ such that for every $\omega\in\mathcal U_\zeta$ the vertical flow $\mathcal T^\omega$ is measure-theoretically isomorphic to $\mathfrak S_{\zeta}(\omega)$. Since $C$ is a topological manifold, it is $\sigma$-compact. Thus there exists a sequence $\{\zeta_n\}_{n\in\n}$ of translation structures such that $\bigcup_{n\in\n}\mathcal U_{\zeta_n}=C$. For each $n\in\n$ we have that
		\[
		\begin{split}
		\mathcal Y_{\zeta_n}:&=\{\omega\in\mathcal U_{\zeta_n};\ \mathcal T^\omega\ \text{satisfies}\ \mathfrak{Prop}\}\\&=\{\omega\in\mathcal U_{\zeta_n};\ \mathfrak S_{\zeta_n}(\omega)\ \text{satisfies}\ \mathfrak{Prop}\}\\&=\mathfrak S_{\zeta_n}^{-1}\{\mathcal T\in
		\operatorname{Flow}(M,\la_{\zeta_n});\ \mathcal T\ \text{satisfies}\ \mathfrak{Prop}\}
		\end{split}
		\]
		 is a $G_\de$ set in $\mathcal U_{\zeta_n}$. By Lemma \ref{pokr}, this gives that the set of $\omega\in C$ such that $\mathcal T^\omega$ satisfies $\mathfrak{Prop}$ is a $G_\de$ set in $C$.
		\end{proof}
	
	By combining Theorem \ref{danrhyz} and Proposition \ref{propback} we get the following result.
	\begin{wn}\label{gdelta}
		The set of translation structures $\zeta$ such that the vertical flow on $(M,\zeta)$ is weakly mixing and disjoint with its inverse is a $G_\de$ set in every connected component $C$ of the moduli space.
	\end{wn}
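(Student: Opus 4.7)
The plan is to deduce Corollary~\ref{gdelta} directly by combining the two preceding results, applying Proposition~\ref{propback} with the specific property $\mathfrak{Prop}$ of being weakly mixing and disjoint with the inverse.

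First, I would fix a nonatomic standard Borel probability space $(X,\mathcal{B}(X),\mu)$ and invoke Theorem~\ref{danrhyz} to conclude that the subset
\[
\mathcal{W}:=\{\mathcal{T}\in\operatorname{Flow}(X): \mathcal{T}\text{ is weakly mixing and disjoint with }\mathcal{T}^{-1}\}
\]
is a $G_\delta$ subset of $\operatorname{Flow}(X)$ (in fact $G_\delta$-dense, but only the $G_\delta$ part is needed here). Note that both weak mixing and disjointness with the inverse are isomorphism invariants, so this property is well-defined independently of the chosen model of the space.

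Next, I would verify that the property $\mathfrak{Prop}$ of ``being weakly mixing and disjoint with its inverse'' satisfies the hypothesis of Proposition~\ref{propback}: the set of flows satisfying $\mathfrak{Prop}$ is precisely $\mathcal{W}$, which is $G_\delta$ in $\operatorname{Flow}(X)$ by the previous step. Applying Proposition~\ref{propback} to $\mathfrak{Prop}$ and any connected component $C\subset\mathcal{M}$, I would conclude that
\[
\{\zeta\in C: \mathcal{T}^\zeta\text{ is weakly mixing and disjoint with its inverse}\}
\]
is a $G_\delta$ subset of $C$, which is the desired conclusion.

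There is essentially no obstacle here, since all the work has been done in the earlier sections: Proposition~\ref{propback} already provides the machinery to pull back any $G_\delta$ property from $\operatorname{Flow}(X)$ to the moduli space through the locally defined continuous embeddings $\mathfrak{S}_\zeta$ constructed in Theorem~\ref{wloz}, and Theorem~\ref{danrhyz} supplies the $G_\delta$-ness of $\mathcal{W}$ in $\operatorname{Flow}(X)$. The only minor point to check is that the property we care about is genuinely a property of the measure-theoretic isomorphism class of the flow (so that it transfers from $\mathcal{T}^\omega$ to $\mathfrak{S}_\zeta(\omega)$), which is clear since weak mixing and disjointness with the inverse are isomorphism invariants.
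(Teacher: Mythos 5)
Your proof is correct and follows exactly the paper's own one-line argument: combine Theorem~\ref{danrhyz} (the relevant set is $G_\delta$ in $\operatorname{Flow}(X)$) with Proposition~\ref{propback} (any $G_\delta$ property of flows pulls back to a $G_\delta$ subset of each connected component). The only addition you make --- noting explicitly that weak mixing and disjointness with the inverse are isomorphism invariants --- is a worthwhile clarification that the paper leaves implicit.
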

	
	Throughout this section we use the following notation. Let $C\subset \mathcal M$ be a non-hyperelliptic connected component of 
	the moduli space, i.e. $C$ is not of the form $\mathcal M^{hyp}(2g-2)$ or $\mathcal M^{hyp}(g-1,g-1)$ for any $g\ge 2$. Let $\pi=(\pi_0,\pi_1)$ be a permutation of 
	the alphabet $\mathcal A$ of $d$ elements from the corresponding extended Rauzy class satisfying
	\[
	\pi_0^{-1}(1)=\pi_1^{-1}(d)\ \text{and}\ \pi_0^{-1}(d)=\pi_1^{-1}(1).
	\]
	This permutation exists due to Theorem \ref{twpierost} and by the choice of $C$, it is not symmetric. Let $\Omega_{\pi}$ be the translation matrix corresponding to $\pi$. By Corollary \ref{nozero} there exist symbols $a_1,a_2\in\mathcal A$ such that  $(\Omega_\pi)_{a_1a_2}=(\Omega_\pi)_{a_2a_1}=0$ and for any rationally independent vector $\tau\in\re^\mathcal A$ the numbers
	\[
	(\Omega_\pi\tau)_{a_2}-(\Omega_\pi\tau)_{a_1}\ \text{ and }\  (\Omega_\pi\tau)_{a_1}-((\Omega_\pi\tau)_{\pi_0^{-1}(1)}+(\Omega_\pi\tau)_{\pi_0^{-1}(d)})
	\]
	are rationally independent.
	The proof of the following lemma goes along the same lines as the proof of Lemma 14 in \cite{Fr}. It is mainly based on the recurrence of polygonal Rauzy-Veech induction.
	\begin{lm}\label{gesty}
		The set
		\[
		C_*:=\{M(\pi,\la,\tau)\in C;\ (\pi,\la,\tau)\in\Theta_\pi,\ \la_a=0\ \text{for}\ a\in\mathcal A\setminus\{\pi^{-1}_0(1),\pi^{-1}_0(d),a_1,a_2\}\}
		\]
		 is dense in $C$.
	\end{lm}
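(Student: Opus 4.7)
The plan is to follow the approach of Lemma 14 in \cite{Fr}, exploiting the recurrence of the polygonal Rauzy--Veech induction. First, I would reduce the problem: since for every permutation $\pi$ in the extended Rauzy class associated to $C$, the image $M(\Theta_\pi)$ is dense in $C$, it suffices to approximate a generic target $\zeta_0 = M(\pi, \la^0, \tau^0)$ with $(\pi, \la^0, \tau^0) \in \Theta_\pi$, where we may further assume (after applying a small rotation if necessary) that $\zeta_0$ admits no vertical saddle connection, so that the iterates $\tilde R^n(\pi, \la^0, \tau^0)$ are well defined for all $n \in \z$.

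Second, generate a family of alternative parameter representations of $\zeta_0$ by iterating $\tilde R$, yielding a sequence $(\pi^{(n)}, \la^{(n)}, \tau^{(n)}) = \tilde R^n(\pi, \la^0, \tau^0)$ that all represent $\zeta_0$ (since $M \circ \tilde R = M$). By the Poincar\'e recurrence theorem applied to the polygonal Rauzy--Veech induction --- equivalently, to the Teichm\"uller flow on the stratum, which is ergodic with respect to the $SL_2(\re)$-invariant measure --- one finds arbitrarily large $n$ for which $\pi^{(n)} = \pi$. Furthermore, by choosing the sequence of Rauzy--Veech moves appropriately (concatenating basic moves corresponding to a specific periodic loop in the Rauzy graph whose existence is ensured by the combinatorial data of $\pi$), one can arrange the length vector $\la^{(n)}$ to have its mass concentrated on the four coordinates in $S := \{\pi_0^{-1}(1), \pi_0^{-1}(d), a_1, a_2\}$, with $\la^{(n)}_a < \de$ for every $a \in \mathcal A \setminus S$ and any prescribed $\de > 0$.

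Third, I would truncate by replacing the small entries of $\la^{(n)}$ by zero to obtain $\la^* \in \re_{\ge 0}^{\mathcal A}$ supported on $S$, and verify that $(\pi, \la^*, \tau^{(n)}) \in \Theta_\pi$: the strict inequality conditions on partial sums of $\tau^{(n)}$ are preserved for small enough $\de$, and the sign compatibility condition for consecutive symbols with zero $\la$-coordinates is guaranteed by the combinatorial properties of $\pi$ and the choice of $a_1,a_2$ from Proposition~\ref{acperm}. The continuity of $M$ on $\Theta_\pi$ then implies $M(\pi, \la^*, \tau^{(n)}) \in C_*$ is close to $\zeta_0$. The main obstacle is the concentration step: proving that the polygonal Rauzy--Veech orbit of a generic $(\pi, \la^0, \tau^0)$ returns to the permutation $\pi$ with a length vector whose non-$S$ coordinates can be made arbitrarily small. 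This requires a careful combinatorial analysis of loops in the Rauzy graph and of the associated Rauzy--Veech matrices --- in particular, identifying a loop whose matrix acts as a contraction on the subspace indexed by $\mathcal A \setminus S$ and whose combinatorial compatibility with $a_1,a_2$ is precisely what Proposition~\ref{acperm} guarantees.
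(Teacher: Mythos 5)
The paper delegates this proof to Lemma~14 of \cite{Fr}, and your proposal correctly identifies that argument's overall shape: reduce to a generic $\zeta_0 = M(\pi,\la^0,\tau^0)$, exploit $M\circ\tilde R = M$ to generate alternative parameter representations of $\zeta_0$, return to the permutation $\pi$ by recurrence, and truncate a representation whose length vector is essentially supported on $S$. So the framework matches the paper's.

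However, the mechanism you propose for closing the gap you yourself flag (the concentration step) is not the right one, and the appeal to Proposition~\ref{acperm} is a misreading of the paper's structure. For a fixed $(\la^0,\tau^0)$ you cannot ``choose the sequence of Rauzy--Veech moves'' by concatenating a prescribed loop: the moves are dictated by the dynamics. What the argument actually uses is the ergodicity of the normalized Rauzy--Veech renormalization (Masur, Veech): for almost every $(\la,\tau)$ the projectivized orbit $(\pi^{(n)},\la^{(n)}/|\la^{(n)}|)$ equidistributes, hence it enters the open set $\{\pi^{(n)}=\pi,\ \la^{(n)}_a/|\la^{(n)}|<\de\ \text{for all}\ a\notin S\}$, with no need to identify a special contracting loop --- indeed Rauzy--Veech cocycle matrices have a Perron--Frobenius structure and do not ``contract onto the subspace indexed by $\mathcal A\setminus S$'' in the sense you describe. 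Moreover, Proposition~\ref{acperm} plays no role in the density statement: via Corollary~\ref{nozero} it only selects $a_1,a_2$ so that the roof function of the eventual special representation has rationally independent jumps, which is what later feeds the weak-mixing and disjointness criteria, not density. The density argument is indifferent to which two extra symbols are appended to $\{\pi_0^{-1}(1),\pi_0^{-1}(d)\}$; in fact the paper shows, by the very same argument in the proof of Proposition~\ref{odwr}, that the smaller set $C_{**}\subset C_*$, where $\la$ is supported on $\{\pi_0^{-1}(1),\pi_0^{-1}(d)\}$ alone, is already dense. Finally, after truncation you still need to verify that $(\pi,\la^*,\tau^{(n)})\in\Theta_\pi$, in particular the sign condition $\tau_\al\tau_\beta>0$ for newly adjacent zero-length symbols, and that too is not something Proposition~\ref{acperm} addresses.
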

	\noindent Before heading to the proof of the main result we give the proof of Proposition \ref{odwr} which treats the density of translation structures on which the vertical flow is reversible.
	\begin{proof}[Proof of Proposition \ref{odwr}]
		By following again the proof of Lemma 14 in \cite{Fr} we prove that the set
	\[
	C_{**}:=\{M(\pi,\la,\tau)\in C;\ (\pi,\la,\tau)\in\Theta_\pi,\ \la_a=0\ \text{for}\ a\in\mathcal A\setminus\{\pi^{-1}_0(1),\pi^{-1}_0(d)\}\}
	\]
	is dense in $C$. The vertical flow on $M(\pi,\la,\tau)\in C_{**}$ is measure-theoretically isomorphic to the vertical flow on a torus given by $(\la_{\pi^{-1}_0(1)},\la_{\pi^{-1}_0(1)})$ and $\pi$ - the non-trivial permutation of two elements. Since the translation flows on tori are reversible, this concludes the proof.
	\end{proof}
	\noindent The special representations of the vertical flows associated to the translation structures from $C_*$ as given in Lemma \ref{gesty} are special flows over IETs of 3 intervals and under a roof functions which are piecewise constant and have discontinuity points which coincide with the discontinuity points of the IET and one additional discontinuity point inside the middle interval. After one step of either left- or right-hand side Rauzy-Veech induction we get a special representation over rotation and under a piecewise constant roof function with 4 discontinuity points. We now prove some properties of such flows.
	
			Let $\al\in[0,1)$ be an irrational number, and let $T_\al:\re/\z\to\re/\z$ be 
			the rotation by $\al$. For any $\beta\in[0,1)$ let
			\[
			\|\beta\|:=\min\big\{\{\beta\},\{1-\beta\}\big\}.
			\]
			Let $\{q_n\}_{n\in\n}$ be the sequence of partial denominators associated to $\al$. Recall that 
			for every odd $n\in\n$, we have a pair of Rokhlin towers 
\[\{T_{\al}^i[-\|q_n\al\|,0)\}_{i=0,\ldots,q_{n-1}-1}\quad \text{ and }\quad \{T_{\al}^i[0,\|q_{n-1}\al\|)\}_{i=0,\ldots,q_{n}-1}\] 
for $n\ge 1$, which covers $\re/\z$. As a corollary from Theorem 3.9 in \cite{BerkFr} we get the following result (recall the definitions of $f^{(r)}$ and $\operatorname{Leb}^f$ from subsection~\ref{sec:specialflows}, as well as the definition of $\mu_{t,s}$ given in \eqref{defjoin}).
	\begin{prop}\label{granicaistnieje}
		Let $\{(T_\al^f)_t\}_{t\in\re}$ be 
		the special flow over 
		the rotation by $\al\in[0,1)$ 
		under a positive roof function $f\in L^2([0,1),Leb)$. 
		Suppose that there exists a rigidity sequence $\{r_n\}_{n\in\n}$ for $T_\al$ (which is a subsequence of $\{q_n\}_{n\in\n}$) such that,
		setting $b_n:=r_n\int_0^1f(y)dy$, the sequence
		\begin{equation}\label{dkcalki}
		\left\{\int_0^1\Big|f^{(r_n)}(x)-b_n\Big|^2dx\right\},\ n\in\n
		\end{equation}
		is bounded. Then there exists a probability measure $P\in\mathcal P(\re^2)$ such that, up to taking a subsequence,
		\[
		\left(f^{(2r_n)}-2b_n,f^{(r_n)}-b_n\right)_*\operatorname{Leb}\to P \text{ weakly.}
		\]
		Moreover, along the same subsequence, we have
		\[
		\operatorname{Leb}^f_{2b_n,b_n}\to\int_{\re^2}\operatorname{Leb}^f_{-t,-u}dP(t,u).
		\]
		\end{prop}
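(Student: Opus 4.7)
The plan is to derive both conclusions from the $L^2$ bound in~\eqref{dkcalki}: first using tightness to extract the subsequential limit $P$, and then identifying the limit joining via a rigidity-plus-approximation method at the rigidity time $r_n$.

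For tightness, the cocycle identity $f^{(2r_n)} = f^{(r_n)} + f^{(r_n)} \circ T_\al^{r_n}$ together with $T_\al$-invariance of Lebesgue measure yields
\[
\|f^{(2r_n)} - 2b_n\|_{L^2} \le 2\|f^{(r_n)} - b_n\|_{L^2},
\]
which is uniformly bounded by hypothesis. Chebyshev's inequality then gives tightness of the laws $\bigl(f^{(2r_n)} - 2b_n,\, f^{(r_n)} - b_n\bigr)_*\operatorname{Leb}$ on $\re^2$, and Prokhorov's theorem provides a subsequence converging weakly to some $P\in\mathcal P(\re^2)$.

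For the joining convergence along this subsequence, the main tool is the explicit description of the special flow at time $b_n$. Set $u_n(x) := f^{(r_n)}(x) - b_n$ and $t_n(x) := f^{(2r_n)}(x) - 2b_n$. On the set where the cocycle count of $T^f$ during time $b_n$ (resp.\ $2b_n$) equals exactly $r_n$ (resp.\ $2r_n$), one has
\[
T^f_{b_n}(x, r) = (T_\al^{r_n} x,\, r - u_n(x)), \qquad T^f_{2b_n}(x, r) = (T_\al^{2r_n} x,\, r - t_n(x)).
\]
The complement of this set has measure tending to $0$ by Chebyshev applied to the $L^2$ bound. Since $\{r_n\}$ is a rigidity sequence for $T_\al$, also $T_\al^{r_n}, T_\al^{2r_n}\to\operatorname{id}$ in measure; combined with the formulas above, $T^f_{b_n}$ and $T^f_{2b_n}$ are in measure arbitrarily close to the respective fibered shifts $T^f_{-u_n(\cdot)}$ and $T^f_{-t_n(\cdot)}$. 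Hence the 3-off-diagonal joining $\operatorname{Leb}^f_{2b_n, b_n}$ differs negligibly, in the weak topology of joinings, from the pushforward of $\operatorname{Leb}^f$ under $(x, r)\mapsto\bigl((x, r),\, T^f_{-u_n(x)}(x, r),\, T^f_{-t_n(x)}(x, r)\bigr)$.

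Testing this latter pushforward against product continuous bounded functions $F_1\otimes F_2\otimes F_3$ and applying Fubini in the fiber variable $r$ reduces the weak convergence to
\[
\int_{[0,1)} G(x, t_n(x), u_n(x))\, dx \longrightarrow \int_{[0,1)}\int_{\re^2} G(x, t, u)\, dP(t, u)\, dx,
\]
where $G$ is a continuous bounded function built from the $F_i$ by integration over the fiber. Equivalently, the joint law $\bigl(\operatorname{id},\, t_n,\, u_n\bigr)_*\operatorname{Leb}$ must converge weakly to $\operatorname{Leb}\otimes P$ on $[0,1)\times\re^2$. The main obstacle is this asymptotic independence of $(t_n, u_n)$ from the base variable: it does not follow from the marginal convergence $(t_n, u_n)_*\operatorname{Leb}\to P$ alone, but exploits the Rokhlin-tower structure of $T_\al$ at the rigidity height $r_n$, on whose shrinking levels the sums $t_n$ and $u_n$ are essentially constant, so they decorrelate from any continuous observable of $x$ in the limit. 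This decorrelation is precisely what Theorem~3.9 of \cite{BerkFr} supplies, and the proposition follows by specializing that theorem to special flows over rotations and by extracting the subsequence along which both $(t_n, u_n)_*\operatorname{Leb}$ and $\operatorname{Leb}^f_{2b_n, b_n}$ converge.
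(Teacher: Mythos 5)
Your argument is correct and lands in the same place as the paper: the paper offers no proof of this proposition beyond the remark that it is ``a corollary from Theorem 3.9 in \cite{BerkFr},'' and your proposal likewise defers the decisive step (the asymptotic independence of $(t_n,u_n)$ from the base variable, equivalently the identification of the limit joining with the integral joining) to that same Theorem~3.9. The preliminary reductions you spell out --- tightness of $(f^{(2r_n)}-2b_n,f^{(r_n)}-b_n)_*\operatorname{Leb}$ via the cocycle identity and the $L^2$ bound, the rigidity approximation $T^f_{b_n}\approx T^f_{-u_n(\cdot)}$ off a set of small measure, and the reduction of the weak joining convergence to $(\operatorname{id},t_n,u_n)_*\operatorname{Leb}\to\operatorname{Leb}\otimes P$ --- are exactly the kind of bookkeeping the citation absorbs, so this is essentially the paper's route with the omitted details restored rather than a genuinely different proof.
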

			
		To prove the next result we need the following remark.
		\begin{uw}\label{wartsum}
		Let $f:[0,1)\to\re$ be a piecewise constant function. 
		Let $\beta_1,\ldots,\beta_k$ be the jumps of $f$ and let $d_1,\ldots,d_k$ be their respective values. Then for every $x\in[0,1)$ and every odd $n\in\n$,
		\[
		\sum_{i=0}^{q_n-1}\big(f(T_\al^{q_n+i}(x))-f(T_\al^{i}(x))\big)=\sum_{i=0}^{q_n-1}\sum_{j=1}^k -d_j\chi_{T_\al^{-i}[\beta_j,\beta_j+\|q_n\al\|)}(x).
		\]
		Indeed, the expression $f(T_\al^{q_n+i}(x))-f(T_\al^{i}(x))$ takes non-zero value if and only if there is a discontinuity point $\beta_j$ 
		in the interval $(T_\al^{q_n+i}(x),T_\al^{i}(x)]$. However, we have 
		$T_\al^{q_n+i}(x)=T_\al^{i}(x)-\|q_n\al\|$. Hence $\beta_j\in (T_\al^{q_n+i}(x),T_\al^{i}(x)]$ if and only if $x\in T_\al^{-i}[\beta_j,\beta_j+\|q_n\al\|)$. In other words, if we consider 
		the (not necessarily disjoint) towers 
		$U_j:=\bigcup_{i=0}^{q_n-1}T^{-i}[\beta_j,\beta_j+\|q_n\al\|)$ for $j=1,\ldots,k$, 
		then
		\[
		\sum_{i=0}^{q_n-1}\big(f(T_\al^{q_n+i}(x))-f(T_\al^{i}(x))\big)=\sum_{j=1}^k-d_j\chi_{U_j}(x).
		\]
		In particular, if $x\notin U_j$ for all $j=1,\ldots,k$, then $\sum_{i=0}^{q_n-1}\big(f(T_\al^{q_n+i}(x))-f(T_\al^{i}(x))\big)=0$. Otherwise, if $x\in U_j$ for some $j=1,\ldots,k$, then $d_j$ contributes to the value of 
		the considered expression.
		\end{uw}
			
			We need the following theorem which is a version of Theorem 7.3 in \cite{BerkFr}.
			Recall that $\xi:\re^2\to\re$ is defined by $\xi(t,u):=t-2u$.
	\begin{lm}\label{specmiary}
		There exists a set $\Lambda\subset\re/\z$ of full Lebesgue measure such that, for every $\al\in\Lambda$, there exists a set $D_\al\in(\re/\z)\times (\re/\z)$ of full Lebesgue measure with the property that, if $(\beta_1,\beta_2)\in D_\al$, then
		\begin{itemize}
		  \item the numbers $0,1-\al,\beta_1$ and $\beta_2$ are distinct;
		  \item  for every piecewise constant positive function $f:\re/\z\to\re$ with discontinuity points $0,1-\al,\beta_1,\beta_2$ and jumps $d_{\beta_1}$ and $d_{\beta_2}$
		at $\beta_1$ and $\beta_2$ respectively, 
		for the special flow $\{(T^f_\al)_t\}_{t\in\re}$ there exist probability measures $P,Q\in\mathcal P(\re^2)$ such that, up to a subsequence, we have
		\begin{equation}\label{zbieznmiar}
			\operatorname{Leb}^f_{2b_n,b_n}\to\int_{\re^2}\operatorname{Leb}^f_{-t,-u}dP(t,u) \text{ and }
			\operatorname{Leb}^f_{-2b_n,-b_n}\to\int_{\re^2}\operatorname{Leb}^f_{-t,-u}dQ(t,u) \text{ weakly,}
		\end{equation}
		where $b_n:=q_n\int_0^1f(x)dx$.
		Moreover, $\xi_*P$ is atomic with exactly 4 atoms in points $0$, $-d_{\beta_1}$, $-d_{\beta_2}$, $d_{\beta_1}+d_{\beta_2}$,
		while $\xi_*Q=(-\xi)_*P$ 
		has exactly 4 atoms in points $0$, $d_{\beta_1}$, $d_{\beta_2}$, $-(d_{\beta_1}+d_{\beta_2})$.
		\end{itemize}

		\end{lm}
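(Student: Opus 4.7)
The plan is to adapt the strategy of Theorem 7.3 in \cite{BerkFr} to roof functions with four discontinuities. Take $\Lambda$ to be the full-measure set of irrational $\al\in[0,1)$ for which some fixed value (say $a_{n+1}=4$) appears infinitely often as a partial quotient of $\al$. Along the corresponding indices $\{n_k\}$ one has $q_{n_k}\|q_{n_k}\al\|\in[1/5,1/4]$; by passing to a further subsequence we may assume $q_{n_k}\|q_{n_k}\al\|\to c$ for some $c\in(0,1/3]$. Define $D_\al\subset(\re/\z)^2$ as the set of $(\beta_1,\beta_2)$ with $\beta_1,\beta_2\notin\{0,1-\al\}$ such that, for all sufficiently large $k$, the towers
\[
U_{\beta}^{n_k}:=\bigcup_{i=0}^{q_{n_k}-1}T_\al^{-i}[\beta,\beta+\|q_{n_k}\al\|),\qquad \beta\in\{\beta_1,\beta_2\},
\]
are pairwise disjoint and disjoint from $U_0^{n_k}\cup U_{1-\al}^{n_k}$. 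A Borel--Cantelli argument based on equidistribution of orbits under $T_\al$ shows that $D_\al$ has full Lebesgue measure.

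Since $f$ is piecewise constant, it has bounded variation, so the Denjoy--Koksma inequality gives $\|f^{(q_n)}-b_n\|_\infty\le V(f)$ along the denominator sequence. In particular the sequence in \eqref{dkcalki} is bounded, so Proposition \ref{granicaistnieje} applies: after extracting a further subsequence there is $P\in\mathcal P(\re^2)$ with
\[
\bigl(f^{(2q_{n_k})}-2b_{n_k},\,f^{(q_{n_k})}-b_{n_k}\bigr)_*\operatorname{Leb}\to P\text{ weakly,}
\]
and consequently $\operatorname{Leb}^f_{2b_{n_k},b_{n_k}}\to\int_{\re^2}\operatorname{Leb}^f_{-t,-u}\,dP(t,u)$, which is the first convergence in \eqref{zbieznmiar}.

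The crux is identifying the atoms of $\xi_*P$. By Remark \ref{wartsum},
\[
\xi\bigl(f^{(2q_n)}-2b_n,\,f^{(q_n)}-b_n\bigr)=f^{(2q_n)}-2f^{(q_n)}=-\sum_j d_j\chi_{U_j^n},
\]
where $j$ ranges over the four discontinuities $0,1-\al,\beta_1,\beta_2$ with jumps satisfying $d_0+d_{1-\al}+d_{\beta_1}+d_{\beta_2}=0$. Since $T_\al^{-1}[0,\|q_n\al\|)=[1-\al,1-\al+\|q_n\al\|)$, one has $U_{1-\al}^n=T_\al^{-1}U_0^n$, so $\operatorname{Leb}(U_0^n\cap U_{1-\al}^n)=(q_n-1)\|q_n\al\|\to c$, while $\operatorname{Leb}(U_0^n\triangle U_{1-\al}^n)=2\|q_n\al\|\to 0$. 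For $(\beta_1,\beta_2)\in D_\al$ the $\beta$-towers are eventually disjoint from these. Passing to the limit, the atoms at $-d_0$ and $-d_{1-\al}$ have vanishing mass and disappear, leaving atoms at $0$ (mass $1-3c$), $-d_{\beta_1}$ (mass $c$), $-d_{\beta_2}$ (mass $c$) and $-(d_0+d_{1-\al})=d_{\beta_1}+d_{\beta_2}$ (mass $c$), as claimed.

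For $Q$, apply the same argument with $\{-q_{n_k}\}$ in place of $\{q_{n_k}\}$; the time-reversed variant of Remark \ref{wartsum} produces atoms of $\xi_*Q$ at the negatives of those of $\xi_*P$, confirming $\xi_*Q=(-\xi)_*P$. The principal technical obstacle is the precise construction of $D_\al$ and the quantitative verification that the tower-disjointness condition persists along the chosen rigidity subsequence; this reduces to a Borel--Cantelli estimate controlling how close a generic point can come to $\{0,1-\al\}$ under orbit segments of length $q_{n_k}$, which is encoded in the continued fraction expansion of $\al$.
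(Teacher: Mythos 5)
Your overall strategy matches the paper's: Denjoy--Koksma plus Proposition~\ref{granicaistnieje} to get the limiting measures, then Remark~\ref{wartsum} to identify the atoms of $\xi_*P$, using that the towers over $0$ and $1-\al$ nearly coincide so their contributions merge into the single atom $-(d_0+d_{1-\al})=d_{\beta_1}+d_{\beta_2}$. Your atom computation and the identity $\xi_*Q=(-\xi)_*P$ (via $T_\al^{2q_n}$-invariance of Lebesgue) are also the same as the paper's.

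However, there is a genuine gap in the construction of $D_\al$. You define $D_\al$ as the set of $(\beta_1,\beta_2)$ for which the towers $U^{n_k}_{\beta_1}$, $U^{n_k}_{\beta_2}$ are disjoint from each other and from $U^{n_k}_0\cup U^{n_k}_{1-\al}$ \emph{for all sufficiently large} $k$. That set has Lebesgue measure zero, not full measure: along the chosen subsequence the tower $U^{n_k}_0$ has measure bounded below by a positive constant $c$, and a dynamical Borel--Cantelli/King-type argument (the very tool you invoke) shows that almost every $\beta$ enters $U^{n_k}_0$ for \emph{infinitely many} $k$, so the ``eventually disjoint'' condition fails a.e. The paper avoids this by asking only for disjointness along \emph{infinitely many} $n$: it builds explicit strips $V_n$ and $W_n$ inside the tall Rokhlin tower that are automatically disjoint from the $0$- and $(1-\al)$-towers, applies the remark to Lemma~3.4 in \cite{King} to get that a.e.\ $\beta_1$ lies in $V_n$ for infinitely many $n$ and (nested) a.e.\ $\beta_2$ lies in $W_n$ for infinitely many such $n$, and then extracts a further subsequence along which both containments hold. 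You should also note that your choice $a_{n+1}=4$ (giving $q_n\|q_n\al\|\approx 1/4$) leaves no room for the disjoint strips: the paper's explicit construction needs $\|q_{n-1}\al\|/\|q_n\al\|>12$, hence the much larger partial quotient $25$ and the smaller range $\ep\in[\tfrac{1}{52},\tfrac{1}{25}]$. The remainder of your argument is sound once $D_\al$ is defined via ``infinitely often'' and a further subsequence is taken.
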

		\begin{proof}
		Let $\Lambda\subset\re/\z$ be the set of irrational $\al\in\re/\z$ such that there exists a sequence $\{k_n\}_{n\in\n}$ of odd numbers such that, for some $\frac{1}{52}\leq\ep\le\frac{1}{25}$, we have $\lim_{n\to\infty}q_{k_n}\|q_{k_n}\al\|=\ep$. The set $\Lambda$ is of full Lebesgue measure. Indeed, the Gauss map $G(x)=\{\frac{1}{x}\}$ is mixing for the absolutely continuous measure with 
		density $\frac{1}{\ln 2}\frac{1}{1+x}$, hence in particular $G^2$ is ergodic.
		For any irrational $\al\in\re/\z$, let $\{a_n\}_{n\in\n}$ be the sequence of partial quotients of $\al$. Then we have 
		(see \cite{Khin})
		\begin{equation}\label{anqn}
		\frac{1}{2}\frac{1}{a_{n+1}+1}<q_n\|q_n\al\|<\frac{1}{a_{n+1}}.
		\end{equation}
		Recall that for any $m\in\n$, $G^n(\al)\in(\frac{1}{m+1},\frac{1}{m}]$ iff $a_n=m$. Hence by 
		ergodicity of $G^2$, for almost every $\al\in[0,1)$, $a_{n+1}=25$ 
		for infinitely many odd numbers $n$. 
		Thus we obtain the claim.
		
		Fix $\al\in\Lambda$. Recall that for every $n\in\n$, $\re/\z$ is covered by a pair of towers
		$$ \{T_{\al}^i[-\|q_{k_n}\al\|,0)\}_{i=0,\ldots,q_{k_n-1}-1}\quad \text{and}\quad \{T_{\al}^i[0,\|q_{k_n-1}\al\|)\}_{i=0,\ldots,q_{k_n}-1}.$$
		Note that $1-\al\in T_\al^{q_{k_n}-1}[0,\|q_{k_n-1}\al\|)$. More precisely, $1-\al=T_\al^{q_{k_n}-1}(0)+\|q_{k_n}\al\|$. Since $\{q_{k_n}\}_{n\in\n}$ is a rigidity sequence and $f$, regardless of the choice of discontinuity points, is of bounded variation, \eqref{zbieznmiar} follows from Proposition \ref{granicaistnieje} and Koksma-Denjoy inequality:
		\[
		|f^{(q_{k_n})}(x)-b_n|\le Var(f)\ \text{ for every }x\in[0,1).
		\]
		
		In view of Proposition \ref{granicaistnieje}, taking a subsequence if necessary, we get
		\[
		P=\lim_{n\to\infty}\big(f^{(2q_{k_n})}-2b_n,f^{(q_{k_n})}-b_n\big)_*Leb
		\]
		and
		\[
		Q=\lim_{n\to\infty}\big(f^{(-2q_{k_n})}+2b_n,f^{(-q_{k_n})}+b_n\big)_*Leb.
		\]
		By applying $\xi_*$ to both expressions we obtain
		\[
		\xi_*P=\lim_{n\to\infty}\left(\sum_{i=0}^{q_{k_n}-1}(f\circ T_\al^{q_{k_n}+i}-f\circ T_\al^i)\right)_{\!\!*}Leb
		\]
		and
		\[
		\xi_*Q=\lim_{n\to\infty}\left(\sum_{i=1}^{q_{k_n}}(f\circ T_\al^{-q_{k_n}-i}-f\circ T_\al^{-i})\right)_{\!\!*}Leb.
		\]		
		By using the invariance of 
		the Lebesgue measure under $T_\al^{2q_{k_n}}$, we get 
		$\xi_*Q=(-\xi)_*P$.
		
		Consider the sequence of pairs of disjoint Rokhlin towers
		\[ V_n:=\bigcup_{i=0}^{q_{k_n}-1}{T^i_\al[3\|q_{k_n}\al\|,\frac{1}{3}\|q_{k_n-1}\al\|)}\ \text{ and }\  W_n:=\bigcup_{i=0}^{q_{k_n}-1}{T^i_\al[\frac{2}{3}\|q_{k_n-1}\al\|,\|q_{k_n-1}\al\|-3\|q_{k_n}\al\||)}.
		\]
		We know (see e.g.~\cite{Khin}) that
		\[
		\frac{1}{2q_{k_n}}<\|q_{k_n-1}\al\|<\frac{1}{q_{k_n}}.
		\]
		Hence
		\[
		\frac{\|q_{k_n-1}\al\|}{\|q_{k_n}\al\|}<\frac{1}{q_{k_n}\|q_{k_n}\al\|}\to\frac{1}{\ep}\
		\text{ and }\
		\frac{\|q_{k_n-1}\al\|}{\|q_{k_n}\al\|}>\frac{1}{2q_{k_n}\|q_{k_n}\al\|}\to\frac{1}{2\ep}.
		\]
		By using the fact that $\ep<\frac{1}{25}$, for sufficiently large $n$ we obtain that
		\begin{equation}\label{szacqn}
			12<\frac{\|q_{k_n-1}\al\|}{\|q_{k_n}\al\|}<53.
			\end{equation}
			In view of \eqref{szacqn} we get
		\[
		\operatorname{Leb}(W_n)=\operatorname{Leb}(V_n)=q_{k_n}(\frac{1}{3}\|q_{k_n-1}\al\|-3\|q_{k_n}\al\|)>q_{k_n}\|q_{k_n}\al\|>\frac{1}{53},
		\]
		for sufficiently large $n\in\n$, that is
		the measures of $V_n$, $W_n$ are bounded away from $0$.

		By the remark to Lemma 3.4 in \cite{King}, this implies that, for almost every $\beta_1\in[0,1)$, there exists an infinite set $N_1$ of natural numbers  such that $\beta_1\in V_{n}$ for each $n\in N_1$.
		Once such $\beta_1$ and $N_1$ are fixed, the same argument yields that for almost every $\beta_2\in[0,1)$ there exists an infinite subset $N_2\subset N_1$  such that $\beta_2\in W_{n}$ for each $n\in N_2$.
		Using Fubini's theorem, we get that for almost every $(\beta_1,\beta_2)\in[0,1)\times[0,1)$ there exist infinitely many integers $n$ such that $\beta_1\in V_n$ and $\beta_2\in W_n$.  Let $D_\al$ be the set of such pairs $(\beta_1,\beta_2)$.

		Now we fix $(\beta_1,\beta_2)$ in $D_\al$. Extracting a subsequence if necessary, we may assume that $\beta_1\in V_n$ and $\beta_2\in W_n$ for all $n$.
		Since $V_n$ and $W_n$ are disjoint and $0,1-\al\notin W_n\cup V_n$ for all $n\in\n$, the points $0$, $1-\al$, $\beta_1$ and $\beta_2$ are distinct. 
		Note that, by the choice of $W_n$ and $V_n$ 
		the towers $\bigcup_{i=0}^{q_{k_n}-1}T^{-i}[\beta_1,\beta_1+\|q_{k_n}\al\|)$, $\bigcup_{i=0}^{q_{k_n}-1}T^{-i}[\beta_2,\beta_2+\|q_{k_n}\al\|)$ and $\bigcup_{i=0}^{q_{k_n}}T^{-n}[0,\|q_{k_n}\al\|)$ are pairwise disjoint.
		Indeed, we have the following inclusions
		\[
		\bigcup_{i=0}^{q_{k_n}-1}T^{-i}[\beta_1,\beta_1+\|q_{k_n}\al\|)\subset\bigcup_{i=0}^{q_{k_n}-1}{T^i_\al[3\|q_{k_n}\al\|,\frac{1}{3}\|q_{k_n-1}\al\|+\|q_{k_n}\al\|)},
		\]
		\[
		\bigcup_{i=0}^{q_{k_n}-1}T^{-i}[\beta_2,\beta_2+\|q_{k_n}\al\|)\subset\bigcup_{i=0}^{q_{k_n}-1}{T^i_\al[\frac{2}{3}\|q_{k_n-1}\al\|,\|q_{k_n-1}\al\|-2\|q_{k_n}\al\|)},
		\]		
		and
		\[
		\bigcup_{i=0}^{q_{k_n}}T^{-i}[0,\|q_{k_n}\al\|)\subset\bigcup_{i=0}^{q_{k_n}-1}{T^i_\al[0,2\|q_{k_n}\al\|)}.
		\]
		In view of \eqref{szacqn}, we get  $\frac{1}{3}\|q_{k_n-1}\al\|>4\|q_{k_n}\al\|$ for sufficiently large $n$. In particular
		\[
		\frac{1}{3}\|q_{k_n-1}\al\|+\|q_{k_n}\al\|<\frac{2}{3}\|q_{k_n-1}\al\|,
		\]
		which 
		shows that the intervals 
\[\big[0,2\|q_{k_n}\al\|\big),\quad  \Big[3\|q_{k_n}\al\|,\frac{1}{3}\|q_{k_n-1}\al\|+\|q_{k_n}\al\|\Big)\text{ and } \Big[\frac{2}{3}\|q_{k_n-1}\al\|,\|q_{k_n-1}\al\|-2\|q_{k_n}\al\|\Big)\]
are pairwise disjoint. This implies the desired disjointness of the aforementioned towers. This allow us to control the atoms of limit measures and their respective masses.
		
		Suppose now that $f$ has discontinuity points at $0$, $1-\al$, $\beta_1$ and $\beta_2$, where $(\beta_1,\beta_2)\in D_\al$ with jumps $d_0,d_{1-\al},d_{\beta_1}$ and $d_{\beta_2}$ respectively. 
		Since $k_n$ is odd,
		in view of Remark \ref{wartsum} 
		the expression $\sum_{i=0}^{q_{k_n}-1}\big(f\circ T_\al^{q_{k_n}+i}(x)-f\circ T_\al^i(x)\big)$ may only take non-zero values for $x$ on towers $U_1:=\bigcup_{i=0}^{q_{k_n}-1}T^{-i}[0,\|q_{k_n}\al\|)$, $U_2:=\bigcup_{i=0}^{q_{k_n}-1}T^{-i}[1-\al,1-\al+\|q_{k_n}\al\|)$, $U_3:=\bigcup_{i=0}^{q_{k_n}-1}T^{-i}[\beta_1,\beta_1+\|q_{k_n}\al\|)$ and $U_4:=\bigcup_{i=0}^{q_{k_n}-1}T^{-i}[\beta_2,\beta_2+\|q_{k_n}\al\|)$. We have proved though, that $U_3$ is disjoint with other towers and the same is true for $U_4$. Thus we get that
		\[
		\sum_{i=0}^{q_{k_n}-1}(f\circ T_\al^{q_{k_n}+i}(x)-f\circ T_\al^i(x))=-d_{\beta_1}\ \text{ for }\ x\in U_3,
		\]
		and
		\[
		\sum_{i=0}^{q_{k_n}-1}(f\circ T_\al^{q_{k_n}+i}(x)-f\circ T_\al^i(x))=-d_{\beta_1}\ \text{ for }\ x\in U_4.
		\]
		On the other hand,
		\[
		U_1\cap U_2=\bigcup_{i=1}^{q_{k_n}-1}T^{-i}[0,\|q_{k_n}\al\|,\ U_1\setminus U_2=[0,\|q_{k_n}\al\|),\  \text{and}\ U_2\setminus U_1=[\|q_{k_n}\al\|,2\|q_{k_n}\al\|).
		\]
		Hence
		\[
		\sum_{i=0}^{q_{k_n}-1}(f\circ T_\al^{q_{k_n}+i}(x)-f\circ T_\al^i(x))=-d_{0}-d_{1-\al}\ \text{ for }\ x\in U_1\cap U_2,
		\]
		\[
		\sum_{i=0}^{q_{k_n}-1}(f\circ T_\al^{q_{k_n}+i}(x)-f\circ T_\al^i(x))=-d_{0}\ \text{ for }\ x\in[0,\|q_{k_n}\al\|),
		\]
		and
		\[
		\sum_{i=0}^{q_{k_n}-1}(f\circ T_\al^{q_{k_n}+i}(x)-f\circ T_\al^i(x))=-d_{1-\al}\ \text{ for }\ x\in[\|q_{k_n}\al\|,2\|q_{k_n}\al\|).
		\]
		Finally
		\[
		\sum_{i=0}^{q_{k_n}-1}(f\circ T_\al^{q_{k_n}+i}(x)-f\circ T_\al^i(x))=0\ \text{ for }\ x\notin U_1\cup U_2\cup U_3\cup U_4.
		\]
		Note that $-d_0-d_{1-\al}=d_{\beta_1}+d_{\beta_2}$, since the sum of jumps of $f$ is $0$. Moreover
		\[
		\operatorname{Leb}\big([0,\|q_{k_n}\al\|)\big)=\operatorname{Leb}\big([\|q_{k_n}\al\|,2\|q_{k_n}\al\|)\big)=\|q_{k_n}\al\|\to 0,
		\]
		\[
		\operatorname{Leb}(U_3)=\operatorname{Leb}(U_4)=q_{k_n}\|q_{k_n}\al\|\to \ep
		\]
		and
		\[
		\operatorname{Leb}(U_1\cap U_2)=q_{k_n}\|q_{k_n}\al\|-\|q_{k_n}\al\|\to \ep.
		\]
		Hence we get that $\xi_*P=\lim_{n\to\infty}(\sum_{i=0}^{q_{k_n}-1}f\circ T_\al^{q_{k_n}+i}-f\circ T_\al^i)_*Leb$ is a measure such that $\xi_*P(\{-d_{\beta_1}\})=\xi_*P(\{-d_{\beta_2}\})=\xi_*P(\{d_{\beta_1}+d_{\beta_2}\})=\ep$ and $\xi_*P(\{0\})=1-3\ep$. Since $\xi_*Q=(-\xi)_*P$, we obtain the final claim.
			\end{proof}
		We now state 
		a reformulation of the above lemma for rotations on arbitrary large circles. If needed, throughout the proof of this lemma, we identify $\re/x\z$ with $[0,x)$ for every $x\in\re_{>0}$.
		\begin{lm}\label{specmiary1}
			There exists a subset $\Delta_0\subset\Delta:=\{(x,y)\in\re_{>0}^2;0< y<x\}$ of full Lebesgue measure in $\Delta$
            with the property that for every $(l,\al)\in\Delta_0$ there exists a set $D_{l,\al}\subset(\re/l\z)\times(\re/l\z)$ of full Lebesgue measure such that for every $(\beta_1,\beta_2)\in D_{l,\al}$ we have
			\begin{itemize}
			\item the numbers $0$, $l-\al$, $\beta_1$ and $\beta_2$ are distinct in $\re/l\z$ and
			\item if $T_\al$ is the rotation on $\re/l\z$ by $\al\in\re/l\z$ and $h:\re/l\z\to\re_{>0}$ is a piecewise constant function with  exactly $4$ discontinuity points at $0,l-\al,\beta_1,\beta_2$ and rationally independent jumps at $\beta_1$ and $\beta_2$, then the special flow $T_\al^h$ is weakly mixing and disjoint with its inverse.
			\end{itemize}
			\end{lm}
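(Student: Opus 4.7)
I plan to deduce Lemma~\ref{specmiary1} from Lemma~\ref{specmiary} by a rescaling argument. The map $\phi_l\colon\re/l\z\to\re/\z$, $x\mapsto x/l$, conjugates rotation by $\al$ on $\re/l\z$ with rotation by $\al/l$ on $\re/\z$. A roof function $h$ on $\re/l\z$ with discontinuities at $0,l-\al,\beta_1,\beta_2$ corresponds, via $\tilde h:=h\circ\phi_l^{-1}$, to a roof function on $\re/\z$ with discontinuities at $0,1-\al/l,\beta_1/l,\beta_2/l$ and identical jump values at the interior singularities. The induced isomorphism between $T_\al^h$ on $(\re/l\z)^h$ and $T_{\al/l}^{\tilde h}$ on $(\re/\z)^{\tilde h}$ only rescales the invariant measure by a factor $l$, which affects neither weak mixing nor disjointness with the inverse.

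Accordingly I would set $\Delta_0:=\{(l,\al)\in\Delta:\al/l\in\Lambda\}$, where $\Lambda$ is the full-measure set from Lemma~\ref{specmiary}; a Fubini argument shows $\Delta_0$ has full measure in $\Delta$. For each $(l,\al)\in\Delta_0$ I define $D_{l,\al}:=\{(\beta_1,\beta_2)\in(\re/l\z)^2:(\beta_1/l,\beta_2/l)\in D_{\al/l}\}$, which has full Lebesgue measure by the linear change of variables, and the distinctness of $0,l-\al,\beta_1,\beta_2$ follows from that of their $\phi_l$-images.

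For the dynamical part, Lemma~\ref{specmiary} applied to $T_{\al/l}^{\tilde h}$ produces measures $P,Q\in\mathcal P(\re^2)$, arising along a subsequence as limits of $\operatorname{Leb}^{\tilde h}_{2b_n,b_n}$ and $\operatorname{Leb}^{\tilde h}_{-2b_n,-b_n}$ respectively, with $\xi_{*}P$ having exactly the four atoms $\{0,-d_{\beta_1},-d_{\beta_2},d_{\beta_1}+d_{\beta_2}\}$. Weak mixing of $T_{\al/l}^{\tilde h}$ then follows from Proposition~\ref{weakmix} applied to $P$: the atoms $-d_{\beta_1}$ and $-d_{\beta_2}$ of $\xi_{*}P$ are rationally independent by hypothesis. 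For disjointness with the inverse I would invoke Corollary~\ref{kryterium1} with $\mathcal T=T_{\al/l}^{\tilde h}$ and $\mathcal S=\mathcal T^{-1}$. The identity $\nu^{\mathcal S}_{a,b}=\mu^{\mathcal T}_{-a,-b}$ shows that the joinings of $\mathcal S$ at $(2b_n,b_n)$ converge to $\int\nu^{\mathcal S}_{-t,-u}\,d(\sigma_{*}Q)(t,u)$, where $\sigma(t,u):=(-t,-u)$. Disjointness thus reduces to the assertion $P\neq\sigma_{*}Q$, which I would verify by computing $\xi_{*}(\sigma_{*}Q)=(-\xi)_{*}Q$ and checking that its atoms $\{0,d_{\beta_1},d_{\beta_2},-(d_{\beta_1}+d_{\beta_2})\}$ are disjoint off $0$ from those of $\xi_{*}P$; rational independence of $d_{\beta_1},d_{\beta_2}$ secures this, so in particular $-d_{\beta_1}$ is an atom of $\xi_{*}P$ but not of $\xi_{*}\sigma_{*}Q$.

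The main technical subtlety I anticipate is matching the measure $Q$ from Lemma~\ref{specmiary}, which records the limit of joinings of $\mathcal T$ along the negative time direction, with the object $\sigma_{*}Q$ required in Corollary~\ref{kryterium1} for the inverse flow. Once this identification is made explicit through the off-diagonal joining formulas, the atom-level comparison is routine thanks to the rational independence hypothesis on the jumps.
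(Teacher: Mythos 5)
You correctly observe that Corollary~\ref{kryterium1}, applied with $\mathcal S=\mathcal T^{-1}$ and the same time sequence $(2b_n,b_n)$, forces a comparison between $P$ and $\sigma_*Q$ (with $\sigma(t,u)=(-t,-u)$) rather than between $P$ and $Q$: indeed $\nu^{\mathcal S}_{2b_n,b_n}=\mu^{\mathcal T}_{-2b_n,-b_n}\to\int\mu^{\mathcal T}_{-t,-u}\,dQ=\int\nu^{\mathcal S}_{-t,-u}\,d(\sigma_*Q)$. The paper's own proof simply writes ``$\xi_*P\neq\xi_*Q$, hence $P\neq Q$, hence disjointness,'' silently dropping the $\sigma_*$; your bookkeeping is the careful one.

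However, the atom set you assign to $\xi_*(\sigma_*Q)=(-\xi)_*Q$, namely $\{0,d_{\beta_1},d_{\beta_2},-(d_{\beta_1}+d_{\beta_2})\}$, does not follow from Lemma~\ref{specmiary} as stated. That lemma attributes precisely this atom set to $\xi_*Q$, and $(-\xi)_*Q(A)=\xi_*Q(-A)$ negates every atom; so from the stated lemma one actually gets that $(-\xi)_*Q$ is supported on $\{0,-d_{\beta_1},-d_{\beta_2},d_{\beta_1}+d_{\beta_2}\}$ with masses $\ep,\ep,\ep,1-3\ep$ --- identical to $\xi_*P$. With that reading the comparison you propose cannot distinguish $P$ from $\sigma_*Q$, and the argument breaks down. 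The reason you nonetheless arrive at the right atom set is that the identity $\xi_*Q=(-\xi)_*P$ asserted inside Lemma~\ref{specmiary} is itself a sign slip: the cocycle relation gives
\[
f^{(-2q_n)}-2f^{(-q_n)}=f^{(q_n)}\circ T_\al^{-q_n}-f^{(q_n)}\circ T_\al^{-2q_n},
\]
and pushing Lebesgue forward (using invariance under $T_\al^{2q_n}$) this has the same law as $f^{(q_n)}\circ T_\al^{q_n}-f^{(q_n)}=f^{(2q_n)}-2f^{(q_n)}$, so in fact $\xi_*Q=\xi_*P$, whence $(-\xi)_*Q=(-\xi)_*P$ carries exactly the atoms you wrote. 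As it stands, though, your proposal asserts an atom computation that contradicts the stated lemma and does not include the calculation that would justify it; you should either carry out the $\xi_*Q$ computation yourself or explicitly flag that the value used differs from the lemma's statement.
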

			\begin{proof}
			Let $\Lambda\subset[0,1)$ and $D_\al\subset[0,1)\times[0,1)$ for $\al\in\Lambda$ be the sets given by Lemma \ref{specmiary}.
			For any $l\in\re_{>0}$, 
			let us also denote by $l:[0,1)\to[0,l)$ the map given by $l(x):=lx$.
			We also consider $l$ as a map between $\re/\z$ and $\re/l\z$. For any $l\in\re_{>0}$ let $\Lambda_l:=l(\Lambda)\subset [0,l)$ and set $D_{l,\al}:=(l\times l)(D_{l^{-1}\al})\subset[0,l)\times[0,1)$ for any $\al\in[0,l)$.
			Define $\Delta_0:=\{(x,y);\ x\in\re_{>0},\,y\in\Lambda_x\}$. Note that $\Delta_0$ is of full Lebesgue measure in $\Delta$ and for every $l\in\re_{>0}$ and $\al\in\Lambda_l$ the set $D_{l,\al}$ is of full Lebesgue measure in $(\re/l\z)\times(\re/l\z)$.
			
			Take $(l,\al)\in\Delta_0$ and $(\beta_1,\beta_2)\in D_{l,\al}$. By the definition of $\Lambda$ and $D_{l^{-1}\al}$, the points $0$, $l-\al$, $\beta_1$ and $\beta_2$ are distinct. Let $h\colon\re/l\z\to\re_{>0}$ be a piecewise constant function which has exactly 4 discontinuity points at $0$, $l-\al$, $\beta_1$ and $\beta_2$.
			Assume that the jumps $d_{\beta_1}$ and $d_{\beta_2}$ at $\beta_1$ and $\beta_2$ are rationally independent.
			Consider the special flow $T_{\al}^h$ on $[0,l)^h$. The map $(l^{-1}\times Id):[0,l)^h\to[0,1)^{h\circ l}$ establishes  an isomorphism of flows $T_{\al}^h$ and $T_{l^{-1}\al}^{h\circ l}$.
			The roof function $h\circ l$ has discontinuities at $0,1-l^{-1}\al,l^{-1}\beta_1$ and $l^{-1}\beta_2$ and has jumps $d_{\beta_1}$ and $d_{\beta_2}$ at $l^{-1}\beta_1$ and $l^{-1}\beta_2$ respectively.
			Moreover, $l^{-1}\al\in\Lambda$ and $(l^{-1}\beta_1,l^{-1}\beta_2)\in D_\al$. In view of Lemma \ref{specmiary}, this gives
			\[				
			\operatorname{Leb}^{h\circ l}_{2b_n,b_n}\to\int_{\re^2}\operatorname{Leb}^{h\circ l}_{-t,-u}dP(t,u),\ \text{ and }\
			\operatorname{Leb}^{h\circ l}_{-2b_n,-b_n}\to\int_{\re^2}\operatorname{Leb}^{h\circ l}_{-t,-u}dQ(t,u)\ \text{ weakly,}
			\]
			for some increasing to infinity real sequence $\{b_n\}_{n\in\n}$ and measures $P,Q\in\mathcal P(\re^2)$. Furthermore, $\xi_*P$ is atomic and has atoms at $0$, $-d_{\beta_1}$, $-d_{\beta_2}$ and $d_{\beta_1}+d_{\beta_1}$, while $\xi_*Q$ is also atomic and has atoms at $0$, $d_{\beta_1}$, $d_{\beta_2}$ and $-(d_{\beta_1}+d_{\beta_1})$. Since $d_{\beta_1}$ and $d_{\beta_2}$ are rationally independent, Proposition \ref{weakmix} implies that $T_{l^{-1}\al}^{h\circ l}$ is weakly mixing. Moreover, the rational independence of $d_{\beta_1}$ and $d_{\beta_2}$ also gives $\xi_*P\neq\xi_*Q$ which yields $P\neq Q$. In view of Corollary \ref{kryterium1} this gives that $T_{l^{-1}\al}^{h\circ l}$ is disjoint with its inverse. Since $T_{l^{-1}\al}^{h\circ l}$ and $T_{\al}^h$ are isomorphic, $T_{\al}^h$ is weakly mixing and disjoint with its inverse.
				\end{proof}

We are now ready to give the proof of the main result of this paper.
\begin{proof}[Proof of Theorem \ref{main}.]
In view of Corollary \ref{gdelta}, the set of translation structures whose associated vertical flow is weakly mixing and disjoint with its inverse is a $G_\de$ set in every connected component of the moduli space. We now show that there is a dense subset of translation structures in each non-hyperelliptic connected component $C$ so that the associated vertical flows are weakly mixing and disjoint with their inverses.

	Fix a non-hyperelliptic connected component $C$ of the moduli space. Recall that for some $d\ge 2$ and an alphabet $\mathcal A$ of $d$ elements, there is a permutation $\pi=(\pi_0,\pi_1)\in S_0^\mathcal A$ in the extended Rauzy class associated with $C$, such that
	\[
	\pi_1(\pi_0^{-1}(1))=d\quad \text{and}\quad \pi_1(\pi_0^{-1}(d))=1.
	\]
	Let $\Omega:=\Omega_\pi$ be the translation matrix corresponding to $\pi$. Then, in view of Corollary \ref{nozero}, there exist symbols $a_1,a_2\in\mathcal A$ such that
	\[
	\Omega_{a_1a_2}=\Omega_{a_2a_1}=0
	\]
	and the numbers
	\[
	(\Omega\tau)_{a_2}-(\Omega\tau)_{a_1}\ \text{ and }\  (\Omega\tau)_{a_i}-((\Omega\tau)_{\pi_0^{-1}(1)}+(\Omega\tau)_{\pi_0^{-1}(d)})
	\]
	are rationally independent for $i=1,2$ whenever $\tau$ is rationally independent.

Let
		\[
		\Xi_*:=\big\{(\pi,\la,\tau)\in \Theta_\pi;\la_a=0\ \text{for}\ a\in\mathcal A\setminus\{\pi^{-1}_0(1),\pi^{-1}_0(d),a_1,a_2\}\big\}
		\]
Let $C_*:=\{M(\pi,\la,\tau)\in C;\ (\pi,\la,\tau)\in\Xi_*\}$. In view of Lemma \ref{gesty}, this is a dense subset of $C$. Hence to prove the density of the desired property in $C$, it is enough to prove that this property holds for a dense set in $C_*$.
We prove 
this by finding a dense subset of parameters in $\Xi_*$ such that the associated translation structures have the sought properties.

Note that the set $\Xi\subset\Xi_*$ given by
\[
\Xi:=\{(\pi,\la,\tau)\in\Xi_*;\ T_{\pi,\la}\ \text{is ergodic};\ \la_{\pi_0^{-1}(1)}\neq\la_{\pi_0^{-1}(d)};\  \tau\ \text{is rationally independent}\}
\]
is dense in $\Xi_*$.
Let $\zeta=M(\pi,\la,\tau)\in C_*$ with $(\pi,\la,\tau)\in\Xi$.
Let $\mathcal T^\zeta$ be the corresponding vertical flow. Recall that it has a special representation $T_{\pi,\la}^h$ over the IET $T_{\pi,\la}:[0,|\la|)\to[0,|\la|)$ and under a piecewise constant roof function $h\colon[0,|\la|)\to\re_{>0}$ which is constant over exchanged intervals. Moreover, if we consider $h=\{h_a\}_{a\in\mathcal A}$ as a vector of values, where $h_a$ is the value of $h$ over the interval corresponding to $a$, then $h_a=-(\Omega\tau)_a$. However, since $(\pi,\la,\tau)\in\Xi_*$, we have that $\la_a=0$ for $a\in\mathcal A\setminus\{\pi^{-1}_0(1),\pi^{-1}_0(d),a_1,a_2\}$. Thus we can reduce the data describing the above special representation.

Let $\hat{\pi}=(\hat\pi_0,\hat\pi_1)$ be 
the permutation on the alphabet $\hat{\mathcal A}:=\{\pi^{-1}_0(1),\pi^{-1}_0(d),a_1,a_2\}$ given by
\[
\hat\pi_0(\pi^{-1}_0(1))=1,\quad \hat\pi_0(\pi^{-1}_0(d))=4,\quad \hat\pi_0(a_1)=2,\quad \hat\pi_0(a_2)=3
\]
and
\[
\hat\pi_1(\pi^{-1}_0(1))=4,\quad \hat\pi_1(\pi^{-1}_0(d))=1,\quad \hat\pi_1(a_1)=2,\quad \hat\pi_1(a_2)=3.
\]
For $a\in\hat{\mathcal A}$ let $\hat{\la}_{a}:=\la_a$. Moreover, since the intervals corresponding to $a\in\mathcal A\setminus\hat{\mathcal A}$ are empty, $h$ can be considered as a vector  $\{h_a\}_{a\in\hat{\mathcal A}}$. Then $\mathcal T^\zeta$ has a special representation $T_{\hat{\pi},\hat{\la}}^h$ over the IET $T_{\hat{\pi},\hat{\la}}:[0,|\la|)\to[0,|\la|)$.

Consider the sets $\Xi_0,\Xi_1\subset\Xi$ given by
\[
\Xi_0:=\{(\pi,\la,\tau)\in\Xi;\ \la_{\pi^{-1}_0(1)}>\la_{\pi^{-1}_0(d)}\}\ \text{and}\ \Xi_1:=\{(\pi,\la,\tau)\in\Xi;\ \la_{\pi^{-1}_0(1)}<\la_{\pi^{-1}_0(d)}\}.
\]
We have $\Xi_0\cup\Xi_1=\Xi$. Suppose first that $(\pi,\la,\tau)\in\Xi_0$ that is $\la_{\pi^{-1}_0(1)}>\la_{\pi^{-1}_0(d)}$. Let $\phi:\{(x,y,z,v)\in\re_{>0}^4;\ x>v\}\to\re^4_{>0}$ be 
the diffeomorphism given by
\[
\phi(x,y,z,v):=(x-v,v,y,z).
\]
Then after one step of the polygonal right hand side Rauzy-Veech induction on $\mathcal T_{\hat{\pi},\hat{\la}}^h$ we get a special
flow $T_\al^{\hat{h}}$ over the rotation $T_\al:[0,\hat\la_{\pi^{-1}_0(1)}+\hat\la_{a_1}+\hat\la_{a_2})\to[0,\hat\la_{\pi^{-1}_0(1)}+\hat\la_{a_1}+\hat\la_{a_2})$ by $\al=\al(\hat \lambda):=\hat\la_{a_1}+\hat\la_{a_2}+\hat\la_{\pi^{-1}_0(d)}$
under a piecewise constant function $\hat h\colon[0,\hat\la_{\pi^{-1}_0(1)}+\hat\la_{a_1}+\hat\la_{a_2})\to\re_{>0}$ with values $h_{\pi^{-1}_0(1)},h_{\pi^{-1}_0(1)}+h_{\pi^{-1}_0(d)},h_{a_1},h_{a_2}$ over the consecutive intervals of lengths given by the vector $\phi(\hat\la_{\pi^{-1}_0(1)},\hat\la_{a_1},\hat\la_{a_2},\hat\la_{\pi^{-1}_0(d)})$.
Recall that the flows $\mathcal T_{\hat{\pi},\hat{\la}}^h$ and $T_\al^{\hat{h}}$ are isomorphic.
Let $l=l(\hat\lambda):=\hat\la_{\pi^{-1}_0(1)}+\hat\la_{a_1}+\hat\la_{a_2}$, $\beta_1=\beta_1(\hat\lambda):=\hat\la_{\pi^{-1}_0(1)}$ and  $\beta_2=\beta_2(\hat\lambda):=\hat\la_{\pi^{-1}_0(1)}+\hat\la_{a_1}$.
Then $\hat h:[0,l)\to\re_{>0}$ has discontinuities at points $l-\al$, $\beta_1$ and $\beta_2$.
The jump at the point $\beta_1$ is equal to $h_{a_1}-(h_{\pi^{-1}_0(1)}+h_{\pi^{-1}_0(d)})$, while at the point $\beta_2$ equals $h_{a_2}-h_{a_1}$. Moreover, we have
\[
h_{a_1}-(h_{\pi^{-1}_0(1)}+h_{\pi^{-1}_0(d)})=-(\Omega\tau)_{a_1}+((\Omega\tau)_{\pi_0^{-1}(1)}+(\Omega\tau)_{\pi_0^{-1}(d)}),
\]
and
\[
h_{a_2}-h_{a_1}=-(\Omega\tau)_{a_2}+(\Omega\tau)_{a_2}.
\]
Since $\tau$ is a rationally independent vector, Corollary \ref{nozero} yields the rational independence of the jumps at $\beta_1$ and $\beta_2$. From now on we treat $T_\al$ as a rotation on $\re/l\z$. Furthermore, we also treat $\hat h$ as a piecewise constant function on $\re/l\z$. Then $\hat h\colon\re/l\z\to\re_{>0}$ gets an additional discontinuity point at $0$.

Let us consider 
the diffeomorphism  $\psi:\re^4_{>0}\to\{(x,y,z,v)\in\re_{>0}^4;\ 0<x-y<z<v<x\}$ given by
\[
\psi(x,y,z,v):=(x+y+z+v,y+z+v,x+y,x+y+z).
\]
Then
\[
\psi\circ\phi:\{(x,y,z,v)\in\re^4_{>0};\ x>v\}\to\{(x,y,z,v)\in\re_{>0}^4;\ 0<x-y<z<v<x\}
\]
is a diffeomorphism and  $\psi\circ\phi\,(\hat\la)=(l,\al,\beta_1,\beta_2)$.

Let $\Delta_0\subset\{(x,y)\in\re_{>0}^2;y\in(0,x)\}$ and $D_{l,\al}\subset(\re/l\z)\times(\re/l\z)$ for $(l,\al)\in\Delta_0$ be sets given by Lemma \ref{specmiary1}.
Then, by Lemma \ref{specmiary1}, for every $(l,\al)\in\Delta_0$ and $(\beta_1,\beta_2)\in D_{l,\al}$, the special flow $T_\al^{\hat h}$
over 
the rotation by $\al$ on $\re/l\z$ 
and under a piecewise constant roof function with discontinuity points $0,l-\al,\beta_1,\beta_2$ and 
with rationally independent jumps at $\beta_1$ and $\beta_2$ is weakly mixing and disjoint with its inverse. Consider
\[
\mathscr{G}:=\{(x,y,z,v)\in\re_{>0}^4;\ (x,y)\in\Delta_0,\,\frac{y}{x}\in\re\setminus\q,\,(z,v)\in D_{x,y},\, 0<x-y<z<v<x\}.
\]
In view of Lemma \ref{specmiary1}, $\Delta_0$ is dense in $\{(x,y)\in\re_{>0}; y<x \}$ and $D_{x,y}$ is dense in $(0,x)\times(0,x)$. Therefore $\mathscr{G}$ is a dense set in $\{(x,y,z,v)\in\re_{>0}^4;\ 0<x-y<z<v<x\}$. As $\psi\circ\phi$ is a diffeomorphism, the set $(\psi\circ\phi)^{-1}(\mathscr{G})$ is dense in $\{(x,y,z,v)\in\re_{>0}^4;\ x>v\}$. Hence the set
 \[
 \Gamma_0:=\{(\pi,\la,\tau)\in\Xi_0;\ \hat\la\in(\psi\circ\phi)^{-1}\,(\mathscr{G})\;\text{ and $\tau$ is a rationally independent vector}\}
 \]
 is dense in $\Xi_0$. By going along the same lines and by using the left-hand side polygonal Rauzy-Veech induction, we find a dense set $\Gamma_1\subset\Xi_1$ which has analogous properties.

 If $\g=(\pi,\la,\tau)\in\Gamma_0\cup\Gamma_1$ then $\big(l(\hat\la),\al(\hat\la)\big)\in\Delta_0$, $\big(\beta_1(\hat\la),\beta_2(\hat\la)\big)\in D_{l(\hat\la),\al(\hat\la)}$ and the vertical flow on $M(\g)$ is isomorphic to a special flow $T_{\al(\hat\la)}^{\hat h}$ on $(\re/l(\hat\la)\z)^{\hat h}$, where $\hat h:\re/l(\hat\la)\z\to\re_{>0}$ is a piecewise constant roof function with discontinuities at $0$, $l(\hat\la)-\al(\hat\la)$, $\beta_1(\hat\la)$, $\beta_2(\hat\la)$ and the jumps at $\beta_1(\hat\la)$ and $\beta_2(\hat\la)$ are rationally independent.
 In view of Lemma \ref{specmiary1}, those flows are weakly mixing and disjoint with their inverses.
 As $\Gamma_0\cup\Gamma_1$ is dense in $\Xi$,  it is also dense in $\Xi_*$.
 Since $M:\Theta_\pi\to C$ given by $(\pi,\la,\tau)\mapsto M(\pi,\la,\tau)$ is continuous and $M(\Xi_*)=C_*$, we have that $M(\Gamma_0\cup\Gamma_1)$ is dense in $C_*$. Moreover, by Lemma~\ref{gesty}, $C_*$ is dense in $C$ which yields the result.

\end{proof}
\section*{Acknowledgments}
The authors would like to thank M. Lemańczyk for fruitful discussions and for proposing the main ideas used in section 3. We would also like to thank S. Gouezel for pointing out the article \cite{GP} and for giving some ideas used in section 6. P. Berk and K. Frączek are partially supported by NCN grant nr  2014/13/B/ST1/03153.

\end{document}